\documentclass[12pt, leqno, british]{amsart}
\usepackage[
	style=alphabetic,
	citestyle=alphabetic,
	backend=biber
]{biblatex}

\usepackage{harmony}
\addbibresource{doc.bib}
\usepackage{a4, amsmath}
\usepackage{mathtools}
\usepackage{amssymb}
\usepackage{epsfig}
\usepackage{graphics}
\usepackage{amsthm, amscd, mathdots}
\swapnumbers
\usepackage{enumerate}
\usepackage{url}
\usepackage{csquotes}
\usepackage{color}
\usepackage{datetime}
\usepackage{stmaryrd}
\usepackage{ifthen}

\usepackage[pass]{geometry}

\usepackage[all]{xy}

\usepackage{hyperref}
\usepackage{cleveref}

\theoremstyle{definition}
\newtheorem{defi}{Definition}[section]

\theoremstyle{plain}
\newtheorem{prop}[defi]{Proposition}
\newtheorem{lem}[defi]{Lemma}
\newtheorem{stel}[defi]{Theorem}
\newtheorem{gev}[defi]{Corollary}

\newtheorem{ques}[defi]{Question}
\newtheorem*{stel*}{Theorem}
\theoremstyle{remark}
\newtheorem{opm}[defi]{Remark}
\newtheorem{vb}[defi]{Example}
\newtheorem{vbn}[defi]{Examples}

\newcommand{\nat}{\mathbb{N}}
\newcommand{\zz}{\mathbb{Z}}
\newcommand{\rr}{\mathbb{R}}
\newcommand{\ff}{\mathbb{F}}

\newcommand{\qq}{\mathbb{Q}}
\newcommand{\charac}{\mathsf{char}}
\newcommand{\mbb}{\mathbb}

\newtheorem*{prop*}{Proposition}

\newcommand{\llangle}{\langle\!\langle}
\newcommand{\rrangle}{\rangle\!\rangle}
\newcommand{\mc}{\mathcal}
\newcommand{\mf}{\mathfrak}

\newcommand{\ovl}{\overline}
\newcommand{\RF}[2]{\mathrm{r}_{#1}(#2)}
\DeclareMathOperator{\Br}{Br}

\let\dim\relax
\DeclareMathOperator{\dim}{\mathsf{dim}}

\renewcommand{\min}{\mathsf{min}}
\renewcommand{\bmod}{\,\,\mathsf{mod}\,\,}
\renewcommand{\setminus}{\smallsetminus}
\renewcommand{\leq}{\leqslant}
\renewcommand{\geq}{\geqslant}

\DeclareMathOperator{\Span}{span}
\DeclareMathOperator{\trdeg}{trdeg}
\DeclareMathOperator{\rrk}{rrk}

\newcommand{\IS}[1]{I_q^{[#1]}}
\newcommand{\Li}[1]{\mbb{L}^{#1}}

\newcommand{\pow}[1]{^{(#1)}}

\makeatletter
\newcommand{\bigperp}{%
  \mathop{\mathpalette\bigp@rp\relax}%
  \displaylimits
}

\newcommand{\bigp@rp}[2]{%
  \vcenter{
    \m@th\hbox{\scalebox{\ifx#1\displaystyle2.1\else1.5\fi}{$#1\perp$}}
  }%
}
\makeatother

\title{Linkage of Pfister forms over semi-global fields}
\author{Nicolas Daans}
\date{20.08.2024}
\address{Charles University, Faculty of Mathematics and Physics, Department of Algebra, Sokolov\-sk\' a 83, 18600 Praha~8, Czech Republic}
\email{nicolas.daans@matfyz.cuni.cz}
\thanks{
The author gratefully acknowledges support by {Czech Science Foundation} (GA\v CR) grant 21-00420M, and {Charles University} PRIMUS Research Programme PRIMUS/24/SCI/010. \\
This version of the article has been accepted for publication after peer review, but is not the Version of Record and does not reflect post-acceptance improvements, or any corrections. The Version of Record is available online at: \href{https://doi.org/10.1007/s00209-024-03598-2}{https://doi.org/10.1007/s00209-024-03598-2}.
  }

\begin{document}
\begin{abstract}
We study linkage of $(d+1)$-fold quadratic Pfister forms over function fields in one variable over a henselian valued field of 2-cohomological dimension $d$.
Specifically, we characterise this property in terms of linkage of quadratic Pfister forms over function fields over the residue field of the henselian valued field; in full generality in characteristic different from 2, and for most complete discretely valued fields in characteristic 2.
As an application, we obtain a proof that $(d+2)$-fold quadratic Pfister forms over function fields in one variable over a $d$-dimensional higher local field are linked.

\medskip\noindent
{\sc Keywords:} linkage, quadratic form, local-global, valuation, henselian

\medskip\noindent
{\sc Classification (MSC 2020):} 11E81 (primary), 11E04, 11R58 (secondary)
\end{abstract}
\maketitle

\section{Introduction}
Over a global field $K$ (like $\qq$, the field of rational numbers) any pair of quaternion algebras is linked, i.e.~the two algebras have a common quadratic subfield.
This can be inferred, for example, from the Hasse-Minkowski Theorem.
This fact implies that the sum of any finite number of classes of quaternion algebras in the Brauer group $\Br(K)$ is again represented by a quaternion algebra. 
Furthermore, this fact can be expressed very explicitly in the language of quadratic forms as follows, assuming for simplicity that $\charac(K) \neq 2$: for all non-zero elements $a_1, a_2, b_1, b_2 \in K$ there exist non-zero elements $c_1, c_2, d \in K$ such that
\begin{align*}
a_1X^2 + a_2Y^2 - a_1a_2Z^2 &\cong c_1X^2 + dY^2 - c_1dZ^2 \quad \text{and} \\
b_1X^2 + b_2Y^2 - b_1b_2Z^2 &\cong c_2X^2 + dY^2 - c_2dZ^2,
\end{align*}
where the relation $\cong$ denotes isometry of quadratic forms, i.e. the forms can be obtained from one another via a linear change of variables.

This statement does not hold when $K$ is replaced by an arbitrary field.
For example, if $K$ is a finitely generated extension of transcendence degree $1$ of a non-real global field (like $\qq(\sqrt{-1})$), then there exist non-linked pairs of quaternion algebras.
This can be partially explained through the fact that $K$ has cohomological dimension $3$, whereas non-real global fields have cohomological dimension $2$.
Over $K$, we should thus rather study the correct higher-dimensional analogues to quaternion algebras.
These are formed by so-called $3$-fold Pfister forms, which are quadratic forms in $2^3 = 8$ variables determined by $3$ parameters from the base field.
And indeed, $3$-fold Pfister forms over $K$ are linked, in the sense that for any pair of them, one can find representations which share $2$ of their $3$ defining parameters.
This fact is contained essentially in recent work of Suresh \cite{Suresh_ThirdGalCohom}; see \Cref{E:linkage} and the discussion afterwards.

In general, for a field $K$ and natural number $d$, $d$-fold Pfister forms (sometimes called $d$-fold \textit{quadratic} Pfister forms) over $K$ form a natural class of quadratic forms in $2^d$ variables over $K$. We say that $d$-fold Pfister forms over $K$ are \emph{linked} if any pair of $d$-fold Pfister forms over $K$ contains a common $(d-1)$-fold Pfister form as a subform.
We refer to sections \ref{sect:QF-preliminaries} and \ref{sect:linkage} below for the precise definitions of these and other concepts from quadratic form theory over fields, as well as for known examples and references.

In this note, we study linkage of $d$-fold Pfister forms over fields, in particular function fields, where $d$ is the $2$-cohomological dimension of the field, or the appropriate analogue of this notion in characteristic $2$.
%
More specifically, we will look at function fields in one variable (i.e.~finitely generated extensions of transcendence degree $1$) over henselian valued fields.
When this valued field is additionally complete and discrete, these function fields are sometimes called \emph{semi-global fields}, hence the title of this text.

We will give a new proof of the known fact that $3$-fold Pfister forms over a function field in one variable over a $p$-adic field are linked, without relying on the cohomological techniques from \cite{Parimala-Suresh-u-invariant,Suresh_ThirdGalCohom} or the highly technical zero counting arguments used in \cite{HeathBrownZeroes, Leep_uInvariantPAdicFuncField}.
Our approach instead relies on an abstract study of how linkage properties for quadratic forms over function fields can be lifted from residue fields of henselian valued fields to the valued fields themselves.
By iterating this, we in fact obtain a generalisation to function fields over so-called ``higher local fields'':
\begin{stel*}[see \Cref{T:higher-local}]
Let $d$ be a non-zero natural number and suppose that there is a sequence $K_1, K_2, \ldots, K_d$ of fields, where $K_1$ is finite, and each $K_{i+1}$ is complete with respect to a discrete valuation with residue field $K_i$.
For any function field in one variable $F/K_i$, $(d+1)$-fold Pfister forms over $F$ are linked.
\end{stel*}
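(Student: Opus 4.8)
The plan is to deduce the statement by induction from the main characterisation of this paper together with classical facts about quadratic forms over global fields. Concretely, I would prove by induction on $i \in \{1, \dots, d\}$ the a priori stronger assertion $\mathsf{L}(i)$: \emph{for every finite field extension $k/K_i$ and every function field in one variable $F/k$, the $(i+1)$-fold Pfister forms over $F$ are linked.} The theorem is then the instance $i = d$ of $\mathsf{L}(d)$ (with $k = K_d$). Passing to finite extensions costs nothing here: a finite extension of a finite field is finite, and a finite extension of a field complete with respect to a discrete valuation is again complete with respect to a discrete valuation, with residue field a finite extension of the original residue field; so $K_1, \dots, K_{i-1}, k$ is again a tower of the shape allowed in the statement.

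For the base case $\mathsf{L}(1)$, any finite extension $k$ of the finite field $K_1$ is itself finite, hence a function field in one variable $F/k$ is a global field of positive characteristic. Over a global field, any two quaternion algebras share a common quadratic splitting field -- a consequence of the Albert--Brauer--Hasse--Noether theorem -- which is exactly the statement that $2$-fold Pfister forms over $F$ are linked; compare the quaternion-algebra example worked out in the introduction, and see also \Cref{E:linkage} and Section~\ref{sect:linkage}. In characteristic $2$ the same conclusion follows from the classification of quadratic forms over global function fields.

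For the inductive step, assume $\mathsf{L}(i)$ and let $k/K_{i+1}$ be a finite extension. Then $k$ is complete with respect to a discrete valuation whose residue field $\bar k$ is a finite extension of $K_i$, so by $\mathsf{L}(i)$ the $(i+1)$-fold Pfister forms over every function field in one variable over $\bar k$, and over every finite extension of $\bar k$, are linked. I would then invoke the main result of this paper, which transfers linkage of $n$-fold Pfister forms over function fields in one variable over the residue field of a complete discretely valued field to linkage of $(n+1)$-fold Pfister forms over function fields in one variable over the field itself, applied with $n = i+1$ to the field $k$. This yields that $(i+2)$-fold Pfister forms over every function field in one variable over $k$ are linked, that is, $\mathsf{L}(i+1)$, completing the induction.

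All of the real work sits in the transfer theorem; granting it, the argument above is pure bookkeeping. The one point that genuinely needs care is that the transfer theorem is proved unconditionally only when the complete discretely valued field has characteristic different from $2$, and under an additional finiteness hypothesis otherwise. The characteristic-$2$ case arises precisely when $\charac(K_1) = 2$ and no residue field higher up in the tower has characteristic $0$, so that every field in the tower is, up to a finite extension, a higher local field $\ff_q((t_1)) \cdots ((t_m))$ of characteristic $2$; such a field is complete discretely valued with $[k : k^2] < \infty$ (equal to $2^m$ in that model, since $\ff_q$ is perfect), which is the type of hypothesis under which the characteristic-$2$ version of the transfer theorem applies. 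I expect this verification, together with the characteristic-$2$ instance of the base case, to be the only mildly delicate points, everything else being immediate from the preceding sections.
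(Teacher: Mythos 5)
Your overall shape --- iterate a residue-to-field transfer theorem starting from the global-field case --- is the same as the paper's, and your base case and the strengthening to finite extensions are fine (indeed the latter is already absorbed into the definition of an $\Li{d}$-field, since a function field over a finite extension of $K$ is a function field over $K$). But the handling of characteristic $2$ contains a genuine confusion that would make the argument fail. The non-dyadic transfer theorem (\Cref{T:classKinduction}) requires the \emph{residue} field to have characteristic different from $2$, not the field itself. So in a tower such as $K_1 = \ff_2$, $K_2 = \qq_2$, $K_3 = \qq_2(\!(t)\!), \ldots$, the step from $K_1$ to $K_2$ is a dyadic mixed-characteristic step ($\charac(K_2)=0$, $\charac(K_1)=2$) to which \Cref{T:classKinduction} does not apply; it requires the separate dyadic transfer theorem \Cref{T:classKinduction-char2}, whose hypothesis is a bound on $[K_1 : K_1\pow{2}]$, not an $\Li{d}$-hypothesis on $K_1$. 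You locate the difficulty in the wrong place: you claim the delicate case is ``precisely when every field in the tower has characteristic $2$'', but that equal-characteristic-$2$ case is in fact handled in the paper \emph{without any transfer theorem at all}, directly from $[K_d : K_d\pow{2}] = 2^{d-1}$ via \Cref{P:AllSymbolsChar2} and \Cref{P:complete-char2} (and note that \Cref{T:classKinduction-char2} explicitly assumes $\charac(K)=0$, so it cannot be applied to $\ff_q(\!(t_1)\!)\cdots(\!(t_m)\!)$ as you suggest). The correct case split is: all residue characteristics odd (iterate \Cref{T:classKinduction}); all characteristics equal to $2$ (use \Cref{P:AllSymbolsChar2} directly); and the mixed case, where one locates the unique index $j$ with $\charac(K_j)=2$, $\charac(K_{j+1})=0$, applies \Cref{T:classKinduction-char2} once at that step, and \Cref{T:classKinduction} for the remaining (non-dyadic, residue characteristic $0$) steps above it.

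A secondary, repairable gap: your induction hypothesis $\mathsf{L}(i)$ records only linkage of $(i+1)$-fold Pfister forms, whereas the transfer theorems take as input that the residue field is an $\Li{i}$-field, i.e.\ that function fields over it are \emph{top}-$(i+1)$-linked, which also demands $I_q^{i+2}F = 0$. You need to carry that vanishing statement through the induction as well (in the base case it follows from finite fields being $C_1$, via \Cref{P:cdFacts} or \Cref{P:d-linked-characterisation}); otherwise the inductive step does not have the hypothesis the transfer theorem requires.
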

It should be noted that, when $\charac(K_1) \neq 2$ in the above Theorem, the result can also be derived from existing work on the strong $u$-invariant of semi-global fields developed in \cite{HHK_ApplicationsPatchingToQuadrFormsAndCSAs}; see \Cref{R:local-global-technique} below for a discussion.
The case where for example $\charac(K_2) = 2 \neq \charac(K_d)$ in the above Theorem does not seem to follow in any straightforward way from previously developed techniques.

Let us phrase the more general transfer results for function fields over henselian valued fields which we obtain.
For a natural number $d$, let us call a field $K$ an \emph{$\Li{d}$-field} if, for any function field in one variable $F/K$, $(d+2)$-fold Pfister forms over $F$ are isotropic, and $(d+1)$-fold Pfister forms over $F$ are linked.
For a valued field $(K, v)$, let us denote by $vK$ and $Kv$ the value group and residue field, respectively; we write the operation on the value group additively.
We call a valued field $(K, v)$ \textit{dyadic} if $\charac(Kv) = 2$ (equivalently, if $v(2) > 0$), and \textit{non-dyadic} otherwise.
We obtain the following principle for function fields over arbitrary non-dyadic henselian valued fields.
\begin{stel*}[see \Cref{T:classKinduction}]
Let $d$ be a natural number, let $(K, v)$ be a henselian valued field with $\charac(Kv) \neq 2$.
Assume that $m = [vK : 2vK] < \infty$.
If $Kv$ is an $\Li{d}$-field, then $K$ is an $\Li{d+m}$-field.
\end{stel*}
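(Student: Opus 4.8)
The plan is to transfer the two required properties from function fields in one variable over the residue field $Kv$ to function fields in one variable over $K$, by equipping the latter with a suitable valuation, passing to a henselization, and invoking the Springer decomposition of quadratic forms over a henselian valued field of residue characteristic different from $2$. Fix a function field in one variable $F/K$; we must show that $(d+m+2)$-fold Pfister forms over $F$ are isotropic and $(d+m+1)$-fold Pfister forms over $F$ are linked. The isotropy statement is the easy part: since $Kv$ is an $\Li{d}$-field, $(d+2)$-fold Pfister forms over any function field in one variable over $Kv$ — and hence, by tensoring, all Pfister forms of fold at least $d+2$ — are hyperbolic over such fields, and therefore also over $Kv$ itself, so $\mathrm{cd}_2(Kv) \leq d+1$; combining the standard bound $\mathrm{cd}_2(K) \leq \mathrm{cd}_2(Kv) + \dim_{\ff_2}(vK/2vK)$ for henselian valued fields with $\charac(Kv) \neq 2$, the equality $\dim_{\ff_2}(vK/2vK) = \log_2 m \leq m-1$, and $\mathrm{cd}_2(F) \leq \mathrm{cd}_2(K)+1$, we get $\mathrm{cd}_2(F) \leq d+m+1$, so every $(d+m+2)$-fold Pfister form over $F$ has trivial cohomological invariant and is hyperbolic.

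For linkage, choose $t \in F$ transcendental over $K$ with $F/K(t)$ finite, and let $w$ be an extension to $F$ of the Gauss valuation on $K(t)$ determined by $v$. Then $Fw$ is a finite extension of the rational function field $Kv(\bar t)$, hence a function field in one variable over $Kv$, and $[wF:vK] < \infty$; since value groups are torsion-free, a short homological computation with $0 \to vK \to wF \to wF/vK \to 0$ (using $|Q \otimes_\zz \ff_2| = |Q[2]|$ for $Q$ finite abelian) yields $[wF:2wF] = [vK:2vK] = m$. Now let $\hat F$ be the henselization of $(F,w)$; it is henselian with residue field $Fw$, value group $wF$, and residue characteristic $\neq 2$. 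By the Springer decomposition, every $(d+m+1)$-fold Pfister form $\psi$ over $\hat F$ is, up to isometry over $\hat F$, of the form $\rho \otimes \llangle c_1, \dots, c_r \rrangle$ where $\rho$ is a $(d+m+1-r)$-fold Pfister form with entries in $\mathcal{O}_w^\times$ and $w(c_1), \dots, w(c_r)$ are $\ff_2$-independent in $wF/2wF$; in particular $2^r \leq m$, so $r < m$ and $d+m+1-r \geq d+2$. Hence the residue form $\bar\rho$ is a Pfister form over $Fw$ of fold at least $d+2$, so it is hyperbolic (as $Fw$ is a function field in one variable over the $\Li{d}$-field $Kv$), and by the Springer decomposition $\psi$ is hyperbolic over $\hat F$. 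Thus every $(d+m+1)$-fold Pfister form over $\hat F$ is hyperbolic, so any two of them are linked.

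What remains — and what I expect to be the main obstacle — is to descend linkage from the henselizations back to $F$, over which $(d+m+1)$-fold Pfister forms need not be hyperbolic. Here one uses the local–global principle for isotropy of Pfister forms over function fields in one variable of henselian valued fields (which, via the characterisation of linkage of Pfister forms through the Witt index of $\psi \perp -\psi'$, also governs linkage): if a pair of $(d+m+1)$-fold Pfister forms over $F$ were not linked, the same would hold over the completion of $F$ at one of the relevant valuations, and at such a completion one either runs the argument above — when the residue field is a function field in one variable over $Kv$ — or notes that the residue field is a finite extension of $Kv$, hence again an $\Li{d}$-field, and the completion is a henselian valued field to which the same analysis, or the theorem itself recursively, applies. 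The delicate points, which I expect to need a carefully formulated transfer principle, are: the local–global principle beyond the complete discretely valued case, and organising the recursion so that it terminates at the ``non-divisorial'' local points, where the Pfister forms in play are of smaller fold than a crude invocation of the inductive hypothesis would handle. The reduction to the henselian setting, the value-group computation, and the Springer bookkeeping are otherwise routine.
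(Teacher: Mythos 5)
Your overall strategy --- convert linkage into isotropy statements, invoke a local--global principle over the function field $F$, and analyse the local henselizations via Springer-type residue decompositions --- is the strategy of the paper, and the pieces you actually carry out are sound: the isotropy of $(d+m+2)$-fold Pfister forms via $\mathrm{cd}_2(K) \leq \mathrm{cd}_2(Kv) + \dim_{\ff_2}(vK/2vK)$ is a legitimate alternative to the paper's route (the paper instead folds isotropy and linkage into the single dimension criterion of \Cref{P:d-linked-characterisation}\eqref{it:d-linked-3} and handles both by one local--global argument), and your computation at the Gauss valuation is correct. However, the proof has a genuine gap exactly where you flag ``what remains'': the descent from the local henselizations back to $F$ is the entire content of the theorem, and you neither supply nor precisely identify the local--global principle that makes it work. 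What is needed is \Cref{T:Mehmeti}, Mehmeti's patching result over Berkovich curves, valid for henselian \emph{rank one} valued fields rather than only complete discretely valued ones; this is a substantial recent theorem, not something to be ``carefully formulated'' in passing. Granting it, one must verify top-$(d+m+1)$-linkage of $F_w$ for \emph{every} rank-one valuation $w$ on $F$ with $w\vert_K$ trivial or $w\vert_K = v$, and your case analysis is incomplete: when $w\vert_K$ is trivial, the residue field $Fw$ is a finite extension of $K$ (not of $Kv$), itself henselian for an extension of $v$, so a two-step residue argument is required; and when $w\vert_K = v$ with $Fw/Kv$ algebraic, one can have $[wF : 2wF] = 2[vK:2vK]$, so the fold count only closes because in that case $Fw$ need merely be top-$d$-linked (via \Cref{C:strongly-linked-implies-linked}). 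Your single Gauss valuation, by contrast, establishes hyperbolicity over one henselization, which by itself says nothing about linkage over $F$.

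A second omission: the statement places no restriction on the rank of $v$, whereas \Cref{T:Mehmeti} requires rank one. The paper bridges this by decomposing $v$ into a tower of rank-one coarsenings when the rank is finite, and by a specialisation to a suitable henselian subfield of finite rank in general; your proposal does not address nontrivial rank at all. In short: right skeleton and correct local computations where you perform them, but the load-bearing steps --- the local--global principle itself, the complete case analysis at all local points, and the rank reduction --- are missing.
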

As mentioned earlier, $3$-fold Pfister forms over function fields in one variable over $\qq(\sqrt{-1})$ are linked, making $\qq(\sqrt{-1})$ an $\Li{2}$-field.
The above Theorem implies, for example, that $\qq(\sqrt{-1})(\!(T)\!)$, the field of formal Laurent series over $\qq(\sqrt{-1})$, is an $\Li{3}$-field (see \Cref{E:Ld-fields-new}).
In particular, $4$-fold Pfister forms over the rational function field $\qq(\sqrt{-1})(\!(T)\!)(X)$ are linked.

For function fields in characteristic $0$ over dyadic henselian discretely valued fields, we obtain a similar result under mild assumptions on the degree of imperfection of the residue field (see \Cref{P:AllSymbolsChar2} below).
For a field $K$, denote by $K\pow{2}$ its subset of squares; this is a subfield if $\charac(K) = 2$.
\begin{stel*}[see \Cref{T:classKinduction-char2}]
Let $(K, v)$ be a henselian discretely valued field with $\charac(K) = 0$ and $\charac(Kv) = 2$.
Assume that $d = \log_2([Kv : Kv\pow{2}])-1 < \infty$.
Then $K$ is an $\Li{d+1}$-field.
\end{stel*}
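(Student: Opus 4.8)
The plan is to unwind the definition of $\Li{d+1}$-field: for an arbitrary function field $F$ in one variable over $K$ we must show that (i) every $(d+3)$-fold quadratic Pfister form over $F$ is isotropic, and (ii) every two $(d+2)$-fold quadratic Pfister forms over $F$ are linked. Both statements will be transferred from the residue fields of discrete valuations on $F$ extending $v$. The key point is that for such a valuation $w$ with $Fw$ not algebraic over $Kv$, the residue field $Fw$ is a function field in one variable over $Kv$, so that $[Fw:Fw\pow{2}] = 2\,[Kv:Kv\pow{2}] = 2^{d+2}$ and \Cref{P:AllSymbolsChar2} applies to $Fw$; when $Fw$ is algebraic over $Kv$ one has $[Fw:Fw\pow{2}]$ dividing $2^{d+1}$, and one argues over $Kv$ or a finite extension, using that $K$ is henselian.

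First I would fix the valuation-theoretic picture. For a function field $F$ in one variable over $K$, take a model of $F$ over the valuation ring $\mc{O}_v$; its codimension-one points give discrete valuations $w$ on $F$ extending $v$, the vertical ones having residue field a function field in one variable over $Kv$, the horizontal ones a finite extension $K'/K$ (again henselian, dyadic, with $[K'v':(K'v')\pow{2}]$ dividing $2^{d+1}$). To pass from $F$ to the completions $\widehat F_w$ I would use a local--global principle for isotropy of Pfister forms --- and, after a reformulation, for linkage of Pfister forms --- over such semi-global fields, asserting that these properties over $F$ are already visible over the $\widehat F_w$ for $w$ in a suitable finite set; alternatively this step can be arranged in the spirit of the abstract transfer used for \Cref{T:classKinduction}. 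Over a horizontal completion $\widehat F_w$ --- a complete discretely valued field with residue field $K'$ --- one peels off one level of valuation and is reduced, via \Cref{P:AllSymbolsChar2} over $K'v'$ together with henselianity, exactly as in the non-dyadic case \Cref{T:classKinduction}.

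The heart of the matter is the transfer over a \emph{dyadic} complete discretely valued field $\widehat F_w$ with residue field $\overline F := Fw$ of known square index. Here I would use the structural description of $W_q(\widehat F_w)$ in terms of the quadratic and bilinear Witt groups of $\overline F$ to reduce statements (i) and (ii) over $\widehat F_w$ to the facts, supplied by \Cref{P:AllSymbolsChar2} over the characteristic-$2$ field $\overline F$, that quadratic Pfister forms over $\overline F$ of sufficiently large fold are hyperbolic and that those just below that threshold are linked. Concretely, an anisotropic $(d+3)$-fold quadratic Pfister form over $F$ would, by the local--global principle, be anisotropic over some $\widehat F_w$, and the residue analysis would then extract from it an anisotropic Pfister form (quadratic or bilinear) over $\overline F$ whose fold is too large to be permitted by \Cref{P:AllSymbolsChar2}; and non-linkage over $F$ of two $(d+2)$-fold quadratic Pfister forms would likewise be detected over some $\widehat F_w$ and would descend to non-linkage over $\overline F$, where a common-slot argument in the style of Elman--Lam and Becher contradicts \Cref{P:AllSymbolsChar2}.

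The main obstacle is precisely this dyadic transfer. In equal characteristic $2$ one has Springer's theorem and second residues at one's disposal, but over $\widehat F_w$ with $\charac \widehat F_w = 0$ and $v(2) > 0$ these are unavailable; instead one must work with the finer filtration of $W_q$ of a dyadic complete discretely valued field, keeping track both of the tame residue data over $\overline F$ and of the ``wild'' contribution governed by $v(2)$ --- in practice by first reducing to the absolutely unramified case, for instance by adjoining a primitive fourth root of unity or by inducting on $v(2)$ --- and then checking that isotropy of $(d+3)$-fold and linkage of $(d+2)$-fold quadratic Pfister forms lift correctly through it, with the interplay between the quadratic and bilinear parts and the shift in the fold coming out right. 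By comparison, the local--global input and the bookkeeping at horizontal valuations should be routine, given the machinery already developed for \Cref{T:classKinduction}.
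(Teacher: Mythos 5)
Your overall architecture --- reduce to henselisations/completions at rank-one valuations via a local--global principle, dispose of the valuations trivial on $K$ by peeling off one discrete level, and concentrate the difficulty in the dyadic valuations extending $v$ --- is the same as in the paper's proof of \Cref{T:classKinduction-char2}, which combines \Cref{T:Mehmeti} with \Cref{P:d-linked-characterisation} to reduce everything to showing that each $F_w$ is top-$(d+2)$-linked. But at the decisive point your text describes what would need to be done rather than doing it. You propose to ``use the structural description of $W_q(\widehat F_w)$ in terms of the quadratic and bilinear Witt groups of $\overline F$'' and to ``check that isotropy and linkage lift correctly,'' possibly after ``reducing to the absolutely unramified case or inducting on $v(2)$.'' No such complete structural description is available in mixed characteristic, and the residue-form machinery of \Cref{P:residueChar2} and \Cref{T:Pfister-residue-main} applies only to classes in $\IS{w}F_w$, i.e.\ to forms all of whose residue forms are non-singular; a priori a $(d+2)$-fold Pfister form over the dyadic local field could be wild, in which case no residue form over $Fw$ is defined and the descent you describe never gets off the ground. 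The paper's actual resolution is \Cref{L:Z-valued-maxdim-MMW} --- every $(d+1)$-fold Pfister form over a dyadic henselian discretely valued field with $[Kv:Kv\pow{2}]\leq 2^{d-1}$ can be written as $\llangle a_1,\ldots,a_d,c]]_K$ with $c,1+4c\in\mc{O}_v^\times$, quoting nontrivial presentation results from \cite{Andromeda-1} --- which forces $I_q^{d+1}K\subseteq\IS{v}K$, together with \Cref{L:dyadic-sepclosed} to verify $I_q^{d+2}F\subseteq\IS{w}F$ for valuations $w$ extending $v$ by passing to the maximal unramified extension. Your proposal contains no substitute for this input, and it is exactly the step the paper itself flags as delicate.

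A secondary issue: indexing the local conditions by codimension-one points of a model of $F$ over $\mc{O}_v$ only produces divisorial (discrete) valuations, whereas \Cref{T:Mehmeti} requires treating every rank-one valuation $w$ with $w\vert_K=v$; by the Dimension Inequality these include valuations with $Fw/Kv$ algebraic and $wF\cong\zz^2$, or with $wF/vK$ torsion, which the paper handles as a separate case. If you instead intend an HHK-style patching statement over points of a closed fibre, that version (in particular for residue characteristic $2$ and for linkage rather than isotropy bounds) would itself need justification, and is precisely what \Cref{R:local-global-technique} indicates is not known to work when the bottom residue field has characteristic $2$.
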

If $\charac(K) = 2$ then no such general theorem can exist and only specific cases can be covered, see \Cref{R:complete-char2} and \Cref{P:complete-char2} below.

Our proofs rely on the usage of a recent local-global principle for isotropy of quadratic forms developed in \cite{Mehmeti_PatchingBerkQuad}, which reduces the problem to the study of linkage over a certain family of henselian valued fields.
In the non-dyadic case, linkage over henselian valued fields can be understood completely via the theory of residue forms, and in the dyadic case, residue forms still provide important pieces of the puzzle.
The used residue forms techniques are largely standard, but for lack of a good reference for valued fields with arbitrary value groups and residue characteristics, we discuss them in detail in \Cref{sect:residue-forms} and then apply them to study linkage over henselian valued fields in \Cref{sect:linkage-local}.

In the dyadic case, our understanding of quadratic forms over henselian valued fields is very limited, and our proofs rely on a delicate combination of partial results on the classification of quadratic forms over dyadic henselian valued fields from \cite{Tietze,MMW91,SpringerTameQuadratic,ChapmanMcKinnieSymbolLength,Andromeda-1}.

\subsection*{Acknowledgements}
I want to thank Karim Johannes Becher for valuable input, in particular suggesting a version of \Cref{P:d-linked-characterisation} as well as pointing out \Cref{vb-Li_n-vs-cd2}.

The questions which inspired me to write this paper came up in the margin of a joint project with Philip Dittmann \cite{Andromeda-2} and I want to thank him for several valuable comments on earlier drafts of this manuscript.

Finally, I am grateful to the anonymous referee, whose diligent proofreading helped identify and resolve several technical issues in an earlier draft of this paper.

\section{Preliminaries on quadratic forms}\label{sect:QF-preliminaries}
Let always $K$ be a field.
We recall and establish some terminology and notation, and refer to \cite[Chapter I-II]{ElmanKarpenkoMerkurjev} for details.
We will denote by $\nat$ the set of natural numbers, and by $\nat^+$ the proper subset of non-zero natural numbers.

A \emph{bilinear form over $K$} is a $K$-bilinear map $\mf{b} : V \times V \to K$, where $V$ is a finite-dimensional $K$-vector space.
We call a bilinear form $\mf{b}$ \emph{non-singular} if for all $x \in V \setminus \lbrace 0 \rbrace$ there exists $y \in V \setminus \lbrace 0 \rbrace$ with $\mf{b}(x, y) \neq 0$.

A \emph{quadratic form over $K$} is a map $q : V \to K$ where $V$ is a finite-dimensional $K$-vector space, such that $q(av) = a^2q(v)$ for all $v \in V$ and $a \in K$, and such that the map
$$ \mf{b}_q : V \times V \to K : (v, w) \mapsto q(v+w) - q(v) - q(w)$$
is a bilinear form.
A quadratic form $q$ is called \emph{non-singular} if its associated bilinear form $\mf{b}_q$ is non-singular.
Given a quadratic form $q : V \to K$, we call $\dim(V)$ the \emph{dimension of $q$}, and might also denote this quantity by $\dim(q)$.

We remark that, if $q : V \to K$ is a quadratic form and $(v_1, \ldots, v_n)$ is a basis for $V$, then there exists a homogeneous degree $2$ polynomial $f(X_1, \ldots, X_n) \in K[X_1, \ldots, X_n]$ such that $q(x_1v_1 + \ldots + x_nv_n) = f(x_1, \ldots, x_n)$ for all $(x_1, \ldots, x_n) \in K^n$.
Conversely, given an arbitrary homogeneous degree $2$ polynomial $f(X_1, \ldots, X_n) \in K[X_1, \ldots, X_n]$, the map $(x_1, \ldots, x_n) \mapsto f(x_1, \ldots, x_n)$ defines a quadratic form $K^n \to K$.
Our definition of quadratic forms as such gives a coordinate-free version of the perhaps more usual definition ``a quadratic form is a homogeneous polynomial of degree 2'' which was alluded to in the Introduction.
While for examples it can be convenient to use the explicit polynomial framework, general proofs can often be more conceptually lean in the coordinate-free framework.
The polynomial framework will however be useful in Propositions \ref{P:binaryFormValuations} to  \ref{P:Pfister-residue-computation}.

Given a quadratic form $q : V \to K$, we call $q$ \emph{isotropic} if $q(v) = 0$ for some $v \in V \setminus \lbrace 0 \rbrace$, \emph{anisotropic} otherwise.
We call a subspace $W \subseteq V$ \emph{totally isotropic} (for $q$) if $q(v) = 0$ for all $v \in W$.
If $q$ is non-singular, we denote by $i_W(q)$ the maximal dimension of a totally isotropic subspace of $q$, called the \emph{Witt index of $q$}.
The form $q$ is called \emph{hyperbolic} if it is non-singular and $\dim(q) = 2i_W(q)$.

Given quadratic forms $q$ and $q'$ over $K$, we denote $q \cong q'$ to say that they are \emph{isometric}.
We denote the (external) \emph{orthogonal sum} of $q$ and $q'$ by $q \perp q'$.
Similarly, we use the symbols $\cong$ and $\perp$ also for isometry and orthogonal sum of bilinear forms.
See \cite[Sections 1 and 7]{ElmanKarpenkoMerkurjev} for details.
We say that two quadratic forms $q$ and $q'$ are \emph{similar} if $q \cong aq'$ for some $a \in K^\times$.
Finally, for a quadratic form $q$ and a bilinear form $\mf{b}$, we denote by $\mf{b} \otimes q$ the \emph{Kronecker product} (or \emph{tensor product}), as defined in \cite[Section 8]{ElmanKarpenkoMerkurjev}.

Given $d \in \nat$ and $a_1, \ldots, a_d \in K^\times$, the \emph{$d$-fold bilinear Pfister form} $\llangle a_1, \ldots, a_d \rrangle^b_K$ is a $K$-bilinear form on $K^{2^d}$.
Such $d$-fold bilinear Pfister forms can be defined inductively: the $0$-fold bilinear Pfister form $\llangle\rrangle^b_K$ is the bilinear form on the $1$-dimensional vector space $K$ given by
$$ K \times K \to K : (x, y) \mapsto xy. $$
Given a $d$-fold bilinear Pfister form $B = \llangle a_1, \ldots, a_d \rrangle^b_K : V \times V \to K$ and $a_{d+1} \in K^\times$, $\llangle a_1, \ldots, a_{d+1} \rrangle^b_K$ is defined by the mapping
$$ (V \times V) \times (V \times V) \to K : ((v_1 , v_2), (w_1 , w_2)) \mapsto B(v_1, w_1) - a_{n+1}B(v_2, w_2). $$
We refer to \autocite[Section 6]{ElmanKarpenkoMerkurjev} for more details.

Now we fix $a_1, \ldots, a_d \in K^\times$ and $b \in K$ with $1+4b \neq 0$ and define the \emph{$(d+1)$-fold quadratic Pfister form} $\llangle a_1, \ldots, a_d, b ]]_K$, which is a quadratic form on $K^{2^{d+1}}$.
We first define the $1$-fold Pfister form $\llangle b ]]_K$ as the quadratic form
$$ K^2 \to K : (x, y) \mapsto x^2 - xy - by^2. $$
Now we define $\llangle a_1, \ldots, a_d, b]]_K$ as the tensor product $\llangle a_1, \ldots, a_d \rrangle^b_K \otimes \llangle b ]]_K$.

In this paper, we will drop the adjective `quadratic' and simply refer to quadratic Pfister forms as \emph{Pfister forms}; we will add the adjective `bilinear' when specifically referring to bilinear Pfister forms.
See \autocite[Section 9.B]{ElmanKarpenkoMerkurjev} for more on quadratic Pfister forms; we remark that the parametrisation of Pfister forms used in \cite{ElmanKarpenkoMerkurjev} for fields of characteristic not $2$ is different from ours, but leads to the same class of quadratic forms up to isometry (see also \cite[Remark 3.2]{Andromeda-1}).

We will consider the \emph{(quadratic) Witt group}\index{Witt group, Witt ring, Witt equivalent} $I_qK$ of a field $K$; see \autocite[Section 8]{ElmanKarpenkoMerkurjev} for proofs and details.
The isometry classes of even-dimensional non-singular quadratic forms form a commutative monoid under the orthogonal sum. We define $I_qK$ to be the monoid obtained by taking the quotient modulo the submonoid of isometry classes of hyperbolic forms. Since $q \perp -q$ is hyperbolic for any non-singular form $q$ over $K$, $I_qK$ is an abelian group.
Non-singular quadratic forms which represent the same element of $I_qK$ are called \emph{Witt equivalent}. Witt equivalent quadratic forms are isometric if and only if they have the same dimension.
Every Witt equivalence class contains a unique anisotropic quadratic form.
For a non-singular quadratic form $q$ over $K$, we denote by $[q]$ its equivalence class in $I_qK$.

Pfister forms are non-singular.
For any $d \geq 2$, we denote by $I_q^dK$ the subgroup of $I_qK$ generated by the classes of $d$-fold Pfister forms, and we set $I_q^1K = I_qK$.
We have $ I_q^1K \supseteq I_q^2K \supseteq I_q^3K \supseteq \ldots$, and furthermore that $I_q^dK$ contains the class of every quadratic form similar to a $d$-fold Pfister form \cite[Section 9.B]{ElmanKarpenkoMerkurjev}; we will call a form which is similar to a $d$-fold Pfister form a \emph{scaled $d$-fold Pfister form}.
We will sometimes coloquially speak of a ``quadratic form in $I_q^dK$'' when we mean a quadratic form whose equivalence class lies in $I_q^dK$.

If $K$ is a field with $\charac(K) \neq 2$, then $I_qK$ can be viewed as an ideal of the so-called \emph{Witt ring} $WK$ of $K$.
We are only interested in the additive group structure of $WK$, so we will not discuss the multiplication operation.
We refer to \autocite[Section 2]{ElmanKarpenkoMerkurjev} for the precise construction of $WK$, and to \autocite[Section 7]{ElmanKarpenkoMerkurjev} for its relation to quadratic forms.
$WK$ may be thought of as constructed by taking the monoid of isometry classes of non-singular quadratic forms (under the orthogonal sum) and taking the quotient modulo the submonoid of isometry classes of hyperbolic forms.
Also in this situation, we call non-singular quadratic forms \emph{Witt equivalent} if they correspond to the same element of $WK$. Witt equivalent forms are isometric if and only if they have the same dimension.
Every Witt equivalence class contains a unique anisotropic quadratic form.

Finally, when $L/K$ is a field extension, we can associate to any quadratic form $q : V \to K$ over $K$ a quadratic form $q_L : V \otimes_K L \to L$ such that $q_L(x \otimes a) = a^2q(x)$ and $\mf{b}_{q_L}(x \otimes a, y \otimes b) = ab\mf{b}_q(x, y)$ for all $x,y \in V, a,b \in L$, the \emph{scalar extension} of $q$ to $L$.
This induces a group homomorphism $I_q K \to I_q L$ called the \emph{restriction map}, and it maps the class of a $d$-fold Pfister form to the class of a $d$-fold Pfister form.

\section{Linkage of Pfister forms}\label{sect:linkage}
\begin{defi}\label{D:Linkage}
For $d \in \nat^+$, we say that \emph{$I^d_qK$ is linked} if every element of $I^d_qK/I^{d+1}_qK$ is the class of a $d$-fold Pfister form.
\end{defi}

The following proposition gives an alternative characterisation of linkage which is often given as a definition.
See \autocite[Section 24]{ElmanKarpenkoMerkurjev} for more on linkage of Pfister forms.
\begin{prop}\label{P:LinkageCharacterisation}
Let $K$ be a field, $d \in \nat^+$.
We have that $I^d_q K$ is linked if and only if for any two $d$-fold Pfister forms $q_1, q_2$ over $K$ there exist $a_1, \ldots, a_{d-1}, b, a_1' \in K$ with $a_1 \cdots a_{d-1}(1+4b)a_1' \neq 0$ such that $$q_1 \cong \llangle a_1, a_2 \ldots, a_{d-1}, b]]_K \enspace\text{and}\enspace q_2 \cong \llangle a_1', a_2, \ldots, a_{d-1}, b]]_K.$$
\end{prop}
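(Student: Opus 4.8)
The plan is to prove the two implications separately; each reduces to the classical linkage theory of Pfister forms developed in \cite[Section~24]{ElmanKarpenkoMerkurjev}, and the work that remains is essentially notational---translating between a common $(d-1)$-fold Pfister divisor and a common choice of $d-1$ of the $d$ slots, verifying the non-vanishing conditions, and disposing of the degenerate case of a hyperbolic Pfister form. Because the parametrisation of Pfister forms fixed in \Cref{sect:QF-preliminaries} is uniform in the characteristic, I will not treat $\charac(K) = 2$ separately. I will use repeatedly the following elementary remark: for a $d$-fold Pfister form $\pi$ over $K$ and any $c \in K^\times$, the difference $[\pi] - [c\pi]$ in $I_q K$ is the class of the $(d+1)$-fold Pfister form $\llangle c\rrangle^b_K \otimes \pi$, so $[c\pi] \equiv [\pi] \pmod{I^{d+1}_q K}$, and (taking $c = -1$) $2[\pi] \in I^{d+1}_q K$; moreover $\llangle 1, a_2, \ldots, a_{d-1}, b]]_K$ is hyperbolic, because the bilinear form $\llangle 1\rrangle^b_K$ is metabolic.

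For ``$\Leftarrow$'' I would argue as follows. Assume the displayed common-slot property and let $\xi \in I^d_q K$. Since $I^d_q K$ is generated by the classes of $d$-fold Pfister forms, and the image of that generating set in $I^d_q K / I^{d+1}_q K$ is closed under negation by the remark, $\xi$ is congruent modulo $I^{d+1}_q K$ to a finite sum $[q_1] + \cdots + [q_n]$ of classes of $d$-fold Pfister forms; by induction on $n$ it suffices to treat $n = 2$. Given $d$-fold Pfister forms $q_1, q_2$, apply the hypothesis to write $q_i \cong \llangle b_i, a_2, \ldots, a_{d-1}, b]]_K$ and set $\rho := \llangle a_2, \ldots, a_{d-1}, b]]_K$. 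A short computation in the Witt group---using that the bilinear form $\llangle b_1\rrangle^b_K \perp -\llangle b_2\rrangle^b_K$ is Witt equivalent to $(-b_1)\llangle b_2/b_1\rrangle^b_K$ and then tensoring with $\rho$---gives
\[
[q_1] - [q_2] \;=\; \big[\,(-b_1)\,\llangle b_2/b_1, a_2, \ldots, a_{d-1}, b]]_K\,\big] \;\equiv\; \big[\,\llangle b_2/b_1, a_2, \ldots, a_{d-1}, b]]_K\,\big] \pmod{I^{d+1}_q K},
\]
the last congruence being the remark. As $[q_1] + [q_2] \equiv [q_1] - [q_2] \pmod{I^{d+1}_q K}$ (by $2[q_2] \in I^{d+1}_q K$), the sum $[q_1] + [q_2]$ is a $d$-fold Pfister class, which closes the induction and shows that $I^d_q K$ is linked.

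For ``$\Rightarrow$'' I would invoke the linkage theory of \cite[Section~24]{ElmanKarpenkoMerkurjev}, by which the hypothesis that $I^d_q K$ is linked (\Cref{D:Linkage}) is equivalent to the statement that any two $d$-fold Pfister forms over $K$ admit a common $(d-1)$-fold Pfister divisor. Given $d$-fold Pfister forms $q_1, q_2$, this produces a $(d-1)$-fold Pfister form $\rho$ and elements $a_1, a_1' \in K^\times$ with $q_1 \cong \llangle a_1\rrangle^b_K \otimes \rho$ and $q_2 \cong \llangle a_1'\rrangle^b_K \otimes \rho$ (when, say, $q_1$ is hyperbolic one takes $\rho$ to be any $(d-1)$-fold Pfister divisor of $q_2$ and $a_1 = 1$, using that $\llangle 1\rrangle^b_K \otimes \rho$ is hyperbolic). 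Writing $\rho = \llangle a_2, \ldots, a_{d-1}, b]]_K$ and expanding the tensor products according to the definition of the quadratic Pfister-form notation then yields exactly $q_1 \cong \llangle a_1, a_2, \ldots, a_{d-1}, b]]_K$ and $q_2 \cong \llangle a_1', a_2, \ldots, a_{d-1}, b]]_K$, the condition $a_1 \cdots a_{d-1}(1+4b)a_1' \neq 0$ being automatic since these are admissible parameters for the two Pfister forms.

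The only genuinely non-trivial input, which I would not reprove here, is the equivalence cited in the previous paragraph---more precisely its forward direction, from ``$I^d_q K$ linked'' to ``every pair of $d$-fold Pfister forms has a common $(d-1)$-fold Pfister divisor''---and this is the heart of \cite[Section~24]{ElmanKarpenkoMerkurjev}. Unpacked, it involves passing to the anisotropic part $\mu$ of $q_1 \perp -q_2$, which is isotropic and hence of dimension at most $2^{d+1} - 2$; using the linkage hypothesis to find a $d$-fold Pfister form whose Witt class differs from $[\mu]$ by an element of $I^{d+1}_q K$; and then bringing in the Arason--Pfister Hauptsatz together with the structure of anisotropic forms of small dimension in $I^{d+1}_q K$---no dimension strictly between $2^{d+1}$ and $2^{d+1} + 2^d$ occurs, and dimension exactly $2^{d+1}$ is attained only by scalar multiples of $(d+1)$-fold Pfister forms---along with the standard dictionary relating the Witt index of $q_1 \perp -q_2$ to common Pfister divisors. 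Everything else is routine bookkeeping with the fixed parametrisation, so this citation is where I expect the real difficulty to lie.
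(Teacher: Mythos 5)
Your proof is correct and follows essentially the same route as the paper: the ``$\Leftarrow$'' direction via the congruence $[q_1]+[q_2]\equiv[\llangle a_1a_1', a_2,\ldots,a_{d-1},b]]_K] \pmod{I^{d+1}_qK}$ (which you derive by hand where the paper simply cites \autocite[Example 4.10]{ElmanKarpenkoMerkurjev}), and the ``$\Rightarrow$'' direction by deferring to the common-divisor characterisation of linkage from \autocite[Section 24]{ElmanKarpenkoMerkurjev} (the paper cites Proposition 24.5 there). The only differences are expository.
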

\begin{proof}
If $q_1 \cong \llangle a_1, a_2 \ldots, a_{d-1}, b]]_K$ and $q_2 \cong \llangle a_1', a_2, \ldots, a_{d-1}, b]]_K$ for $a_1, \ldots, a_{d-1}, b, a_1' \in K$ with $a_1 \cdots a_{d-1}(1+4b)a_1' \neq 0$, then $q_1 \perp q_2$ is equivalent modulo $I^{n+1}_qK$ to the quadratic form $\llangle a_1a_1', a_2, \ldots, a_{d-1}, b]]_K$ \autocite[Example 4.10]{ElmanKarpenkoMerkurjev}.
Thus, if such representations can be found for any two $d$-fold Pfister forms, and since $I^d_qK/I^{d+1}_qK$ is generated by classes of $d$-fold Pfister forms, we infer that $I^d_q K$ is linked.

The other implication follows from \autocite[Proposition 24.5]{ElmanKarpenkoMerkurjev}.
\end{proof}
In other words, \Cref{P:LinkageCharacterisation} says that $I^d_qK$ is linked if and only if any two $d$-fold quadratic Pfister forms over $K$ contain a common $(d-1)$-fold Pfister form as a subform.
This is what we described as ``$d$-fold Pfister forms over $K$ are linked'' in the Introduction.

The notion of linkage of Pfister forms in characteristic not $2$ was introduced in \cite[Section 4]{ElmanLamLinkage}.
Meanwhile, several variations of this notion have been considered.
For example, one can study when a pair of quadratic $d$-fold Pfister forms contains a common $(d-m)$-fold quadratic Pfister form as a subform for different values of $m$, see e.g.~\cite[Section 24]{ElmanKarpenkoMerkurjev}.
In characteristic $2$, one can also consider the property of a pair of $d$-fold Pfister forms to have a common bilinear $(d-1)$-fold Pfister form as a factor; by \cite[Corollary 2.1.4]{FaivreThesis} this is stronger than our notion introduced in \Cref{D:Linkage}.
See \cite{CGV-Linkage,CD-Linkage} for recent work on notions of linkage in characteristic $2$.
Finally, we mention that the notion of ``$n$-linkage of $d$-fold Pfister forms'' (i.e.~for any set containing $n$ $d$-fold Pfister forms over $K$ there exists a common $(d-1)$-fold Pfister form as a subform) has gained significant attention in recent years, see e.g.~\cite{Becher-Triple,CD-Triple,CT-LinkagePfister,Chapman-Linkage-quaternion,BGStrongLinkage}.

\begin{defi}
Let $d \in \nat^+$. 
We call a field $K$ \emph{top-$d$-linked} if $I^d_q K$ is linked and $I^{d+1}_q K = 0$.
We say that a field $K$ is an \emph{$\Li{d}$-field} if every function field in one variable $F/K$ is top-$(d+1)$-linked.
\end{defi}
Here, by a function field in one variable, we mean a finitely generated field extension of transcendence degree one.
Since the property of being top-$(d+1)$-linked is clearly of a finitary nature, one has that a field $K$ is an $\Li{d}$-field if and only if every transcendence degree one field extension $F/K$ (not necessarily finitely generated) is top-$(d+1)$-linked.
However, the definition with function fields will be easier to work with.
\begin{prop}\label{P:cdFacts}
Let $K$ be a field with $\charac(K) \neq 2$, $d \in \nat$.
If $I^{d+1}_q L = 0$ for all finite separable extensions $L/K$, then $I^{d+2}_{q} F = 0$ for all field extensions $F/K$ of transcendence degree $1$.
\end{prop}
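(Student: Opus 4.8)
The plan is to reinterpret both the hypothesis and the conclusion in terms of the $2$-cohomological dimension, by combining the resolution of the Milnor conjecture with Serre's theorems on cohomological dimension; throughout, cohomology means Galois cohomology with coefficients in $\zz/2$.

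First I would record the following formal consequence of two standard facts: for any field $E$ with $\charac(E) \neq 2$ and any $n \in \nat$ one has
$$ I_q^n E = 0 \quad\Longleftrightarrow\quad H^m(E, \zz/2) = 0 \text{ for all } m \geq n. $$
Indeed, by the resolution of the Milnor conjecture the graded quotient $I_q^m E / I_q^{m+1} E$ is isomorphic to $H^m(E, \zz/2)$ for every $m$. Hence if $I_q^n E = 0$, then $I_q^m E = 0$ and so $H^m(E, \zz/2) = 0$ for all $m \geq n$; conversely, if $H^m(E, \zz/2) = 0$ for all $m \geq n$, then $I_q^n E = I_q^{n+1} E = \cdots$, whence $I_q^n E = \bigcap_{m \geq n} I_q^m E = 0$ by the Arason--Pfister Hauptsatz.

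Applying this equivalence with $E$ running over the finite separable extensions of $K$ and $n = d+1$, the hypothesis that $I_q^{d+1} L = 0$ for all such $L$ says exactly that $H^m(L, \zz/2) = 0$ for all $m \geq d+1$ and all finite separable $L/K$, which by Serre's characterisation of cohomological dimension is equivalent to $\mathrm{cd}_2(K) \leq d$. Now let $F/K$ be any extension of transcendence degree $1$ and pick $t \in F$ transcendental over $K$, so that $F/K(t)$ is algebraic. Since $2 \neq \charac(K)$, Serre's theorems give $\mathrm{cd}_2(F) \leq \mathrm{cd}_2(K(t)) \leq \mathrm{cd}_2(K) + 1 \leq d+1$. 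In particular $H^m(F, \zz/2) = 0$ for all $m \geq d+2$, so the displayed equivalence, applied now with $E = F$ and $n = d+2$, yields $I_q^{d+2} F = 0$, as required.

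The argument is essentially bookkeeping with well-known inputs rather than a genuinely difficult step. The points that need attention are: the hypothesis $\charac(K) \neq 2$, which is exactly what makes both the Milnor-conjecture identification $I_q^m / I_q^{m+1} \cong H^m(-,\zz/2)$ and Serre's function-field bound available; the appeal to the Arason--Pfister Hauptsatz to pass from vanishing of the graded pieces to vanishing of $I_q^m$ itself; and the standard fact that $\mathrm{cd}_2$ does not increase along algebraic extensions, which is what lets the conclusion cover transcendence-degree-one extensions that are not finitely generated. One could alternatively avoid cohomological dimension and argue directly with the residue exact sequences governing $I_q^n$ of $K(t)$ and of function fields of curves, but the reduction to $\mathrm{cd}_2$ is cleaner and makes the role of the hypothesis transparent.
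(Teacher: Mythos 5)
Your proposal is correct and follows the same route as the paper: the paper's proof simply cites a known result (\cite[Proposition 2.4]{BDGMZ}) after noting that the statement ``can be translated to a well-known property of the $2$-cohomological dimension of fields in view of the Milnor conjectures'', and your argument carries out exactly that translation (Milnor conjecture plus Arason--Pfister to pass between $I_q^n$ and Galois cohomology, Serre's criterion to identify the hypothesis with $\mathrm{cd}_2(K)\leq d$, and the transcendence-degree bound together with monotonicity of $\mathrm{cd}_2$ under algebraic extensions). The details you supply are precisely those delegated to the reference, so there is nothing to add.
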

\begin{proof}
This can be translated to a well-known property of the $2$-cohomological dimension of fields in view of the Milnor conjectures.
See for example \cite[Proposition 3.5]{BDGMZ}: there the hypothesis ``$I^{d+2}_q K(X) = 0$'' is used instead of ``$I^{d+1}_q L = 0$ for all finite separable extensions $L/K$'', but in \cite[Remark 3.4]{BDGMZ} it is explained that these are equivalent.
\end{proof}
\begin{prop}\label{P:AllSymbolsChar2}
Let $K$ be a field with $\charac(K) = 2$, $d \in \nat$.
If $[K: K\pow{2}] \leq 2^d$, then $K$ is an $\Li{d+1}$-field.
Conversely, if $K$ is an $\Li{d+1}$-field, then $[K:K\pow{2}] \leq 2^{d+1}$.
\end{prop}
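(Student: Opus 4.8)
\medskip

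The plan is to unwind the definition and then lean on two facts. By definition, $K$ is an $\Li{d+1}$-field exactly when every function field in one variable $F/K$ is top-$(d+2)$-linked, i.e.\ $I^{d+3}_q F = 0$ and $I^{d+2}_q F$ is linked. The first fact I would use is that the imperfection degree is additive along finitely generated extensions, so that $[F:F\pow{2}] = 2[K:K\pow{2}]$ whenever $F/K$ is a function field in one variable. The second, and central, fact is: for \emph{any} field $L$ of characteristic $2$ with $m := \log_2[L:L\pow{2}] < \infty$, one has $I^{m+2}_q L = 0$ and $I^{m+1}_q L$ is linked.

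I would prove this second fact as follows. Any $m+1$ elements of $L^\times$ are $2$-dependent over $L\pow{2}$, so every $(m+1)$-fold bilinear Pfister form over $L$ is isotropic, hence metabolic; consequently every $(m+2)$-fold quadratic Pfister form $\llangle a_1,\ldots,a_{m+1},b]]_L = \llangle a_1,\ldots,a_{m+1}\rrangle^b_L \otimes \llangle b]]_L$ is hyperbolic, and as such classes generate $I^{m+2}_q L$ we get $I^{m+2}_q L = 0$. For the linkage statement, I would fix $t_1,\ldots,t_m \in L^\times$ with $[L\pow{2}(t_1,\ldots,t_m):L\pow{2}] = 2^m$ and check that $b \mapsto [\llangle t_1,\ldots,t_m,b]]_L]$ is a group homomorphism $L \to I^{m+1}_q L$, the obstruction to additivity lying in the image of $I^2_q L$ under $[q] \mapsto [\llangle t_1,\ldots,t_m\rrangle^b_L \otimes q]$, hence in $I^{m+2}_q L = 0$. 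By the characteristic $2$ Milnor conjecture there is a surjection $\Omega^m_L \twoheadrightarrow I^{m+1}_q L/I^{m+2}_q L$ sending $b\,\tfrac{da_1}{a_1}\wedge\cdots\wedge\tfrac{da_m}{a_m}$ to $[\llangle a_1,\ldots,a_m,b]]_L]$; and since $[L:L\pow{2}] = 2^m$ the $L$-module $\Omega^m_L$ is one-dimensional, spanned by $\tfrac{dt_1}{t_1}\wedge\cdots\wedge\tfrac{dt_m}{t_m}$. Combining these, the homomorphism above is onto $I^{m+1}_q L/I^{m+2}_q L$, so every class there — equivalently, since $I^{m+2}_q L = 0$, every element of $I^{m+1}_q L$ — is the class of an $(m+1)$-fold Pfister form.

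With these in hand the forward implication is quick: if $[K:K\pow{2}] \leq 2^d$ and $F/K$ is a function field in one variable, then $[F:F\pow{2}] = 2[K:K\pow{2}] = 2^m$ with $m \leq d+1$, so $I^{m+2}_q F = 0$ and $I^{m+1}_q F$ is linked; as $d+3 \geq m+2$ we get $I^{d+3}_q F = 0$, and $I^{d+2}_q F$ is linked, being $I^{m+1}_q F$ if $m = d+1$ and zero (since $I^{d+2}_q F \subseteq I^{m+2}_q F$) if $m \leq d$. Hence $F$ is top-$(d+2)$-linked and $K$ is an $\Li{d+1}$-field. For the converse I would argue by contradiction: suppose $K$ is an $\Li{d+1}$-field but $[K:K\pow{2}] > 2^{d+1}$, and pick $t_1,\ldots,t_{d+2} \in K^\times$ with $[K\pow{2}(t_1,\ldots,t_{d+2}):K\pow{2}] = 2^{d+2}$, so $\llangle t_1,\ldots,t_{d+2}\rrangle^b_K$ is anisotropic. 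I claim the $(d+3)$-fold Pfister form $\pi := \llangle t_1,\ldots,t_{d+2},X]]$ over $F := K(X)$ is anisotropic; since $F/K$ is then a function field in one variable with $I^{d+3}_q F \neq 0$, it is not top-$(d+2)$-linked, contradicting the hypothesis, and the converse follows. To see the claim I would pass to the completion $L := K(\!(1/X)\!)$ at the place at infinity: there the $t_i$ are units whose residues stay $2$-independent in $K$ (indeed $t_1,\ldots,t_{d+2},1/X$ is a $2$-independent family in $L$), while $v(X) = -1$, so $X$ serves as an Artin--Schreier parameter of odd negative value, and the residue-form computations of \Cref{sect:residue-forms} show that $\pi$ is anisotropic over $L$ because its residue form is the anisotropic $\llangle t_1,\ldots,t_{d+2}\rrangle^b_K$; a fortiori $\pi$ is anisotropic over $F$.

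The main obstacle I anticipate is the linkage clause of the second fact. The vanishing $I^{m+2}_q L = 0$ is elementary, but linkage seems to require genuinely invoking the characteristic $2$ Milnor conjecture, the decisive point being that when $[L:L\pow{2}] = 2^m$ the space $\Omega^m_L$ is one-dimensional, so that the $(m+1)$-fold Pfister forms coming from a single fixed family $t_1,\ldots,t_m$ already fill out $I^{m+1}_q L/I^{m+2}_q L$; an elementary argument comparing two arbitrary such Pfister forms directly runs into the fact that their bilinear parts need not be isometric. The anisotropy of $\pi$ in the converse is the other delicate point, turning on the wild ramification carried by the slot $X$ over $K(\!(1/X)\!)$.
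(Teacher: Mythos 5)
Your overall route is the same as the paper's: use $[F:F\pow{2}] = 2[K:K\pow{2}]$ to reduce the forward implication to the claim that any field $L$ of characteristic $2$ with $[L:L\pow{2}] = 2^m$ is top-$(m+1)$-linked, and refute the converse with the $(d+3)$-fold Pfister form $\llangle t_1,\ldots,t_{d+2},X]]_{K(X)}$. The paper outsources the first claim to \cite[Remark 3.1]{ChapmanMcKinnieSymbolLength}, whereas you reprove it; your argument ($2$-dependence makes every $(m+1)$-fold bilinear Pfister form metabolic, hence $I_q^{m+2}L = 0$, and Kato's characteristic-$2$ analogue of the Milnor conjecture together with $\dim_L\Omega^m_L = 1$ shows that the single family $t_1,\ldots,t_m$ already exhausts $I_q^{m+1}L$) is sound and is essentially the content of the cited remark, so nothing is lost there.

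The one step that does not work as written is your justification of the anisotropy of $\pi = \llangle t_1,\ldots,t_{d+2},X]]$ over $L = K(\!(1/X)\!)$. The machinery of \Cref{sect:residue-forms} does not apply to $\pi$: \Cref{P:Pfister-residue-computation} requires the last slot $b$ to satisfy $v(b) = v(1+4b) = 0$, whereas $v(X) = -1$; correspondingly $[\pi]\notin\IS{v}L$, the residue forms of $\pi$ are totally singular, and $\llangle t_1,\ldots,t_{d+2}\rrangle^b_K$ is a bilinear form, not a residue form in the sense of that section. The claim is nevertheless true, and the repair is a short direct computation. Writing $\pi \cong \bigperp_{J \subseteq \lbrace 1,\ldots,d+2\rbrace} c_J\, [1,X]$ with $c_J = \prod_{i\in J}t_i$, one has $\pi(x,y) = \sum_J c_Jx_J^2 + \sum_J c_Jx_Jy_J + X\sum_J c_Jy_J^2$. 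The diagonal form $\sum_J c_J Z_J^2$ is anisotropic over the residue field $K$ because the $c_J$ are linearly independent over $K\pow{2}$, so \Cref{P:binaryFormValuations} gives that the first summand has even value $2\min_J v(x_J)$ and the third has odd value $-1 + 2\min_J v(y_J)$, while the cross term has value at least $\min_J v(x_J) + \min_J v(y_J)$, which strictly exceeds the smaller of those two; hence $v(\pi(x,y)) < \infty$ for every nonzero vector and $\pi$ is anisotropic over $L$, a fortiori over $K(X)$. (The paper asserts this anisotropy without proof, so you are in good company, but the appeal to the residue-form propositions should be replaced by this computation.)
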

\begin{proof}
If $F$ is a function field in one variable over $K$, then $[F: F\pow{2}] = 2[K : K\pow{2}]$.
Hence, for the first statement, it suffices to show that $[K: K\pow{2}] \leq 2^d$ implies that $K$ is top-$(d+1)$-linked.
This is immediate from \autocite[Remark 3.1]{ChapmanMcKinnieSymbolLength}.

For the second statement, assume $[K : K\pow{2}] > 2^{d+1}$.
Let $a_1, \ldots, a_{d+2} \in K^\times$ be linearly independent over $K\pow{2}$.
The $(d+3)$-fold Pfister form $\llangle a_1, \ldots, a_{d+2}, T ]]_{K(T)}$ is anisotropic, whence $I_q^{d+3} K(T) \neq 0$, whereby $K(T)$ is not top-$(d+2)$-linked and hence $K$ is not an $\Li{d+1}$-field.
\end{proof}
\begin{prop}\label{P:d-linked-characterisation}
Let $d \in \nat^+$, let $K$ be a field.
The following are equivalent.
\begin{enumerate}[(i)]
\item\label{it:d-linked-1} $K$ is top-$d$-linked.
\item\label{it:d-linked-2} Every anisotropic quadratic form in $I^d_q K$ has dimension exactly $2^d$.
\item\label{it:d-linked-3} Every anisotropic quadratic form in $I^d_q K$ has dimension less than $2^{d+1}-2^{d-1}$.
\end{enumerate}
\end{prop}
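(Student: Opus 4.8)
The plan is to prove the cyclic chain of implications $\eqref{it:d-linked-1}\Rightarrow\eqref{it:d-linked-2}\Rightarrow\eqref{it:d-linked-3}\Rightarrow\eqref{it:d-linked-1}$, reading \eqref{it:d-linked-2} as a statement about anisotropic forms of positive dimension (the zero form lies vacuously in every $I^d_q K$ and is vacuously anisotropic, but it causes no difficulty, being the class of a hyperbolic, hence isotropic, $d$-fold Pfister form). Besides the Arason--Pfister Hauptsatz in the form ``a nonzero anisotropic quadratic form whose class lies in $I^n_q K$ has dimension at least $2^n$'', I would rely on two further standard facts valid in all characteristics (see \cite{ElmanKarpenkoMerkurjev} and the references therein): the refinement of the Hauptsatz according to which such a form has dimension $0$, $2^n$, or at least $2^{n+1}-2^{n-1}$ (the first ``hole'' in the admissible dimensions of forms in $I^n_q K$); and the structure result that an anisotropic quadratic form of dimension exactly $2^n$ with class in $I^n_q K$ is similar to an $n$-fold Pfister form.

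For $\eqref{it:d-linked-1}\Rightarrow\eqref{it:d-linked-2}$ I would argue as follows: if $K$ is top-$d$-linked then $I^{d+1}_q K = 0$, so $I^d_q K / I^{d+1}_q K = I^d_q K$ and linkage means every class in $I^d_q K$ is the class of a $d$-fold Pfister form. Given an anisotropic $q$ with $[q] \in I^d_q K$ and $\dim q > 0$, write $[q] = [\pi]$ with $\pi$ a $d$-fold Pfister form; since $[\pi] = [q] \neq 0$ the Pfister form $\pi$ is not hyperbolic, hence anisotropic, so by uniqueness of the anisotropic form in a Witt class we get $q \cong \pi$ and $\dim q = 2^d$. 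The implication $\eqref{it:d-linked-2}\Rightarrow\eqref{it:d-linked-3}$ is immediate, since $2^d = 2\cdot 2^{d-1} < 3\cdot 2^{d-1} = 2^{d+1}-2^{d-1}$.

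The substantive step is $\eqref{it:d-linked-3}\Rightarrow\eqref{it:d-linked-1}$. First I would observe that $I^{d+1}_q K = 0$: a nonzero anisotropic form in $I^{d+1}_q K \subseteq I^d_q K$ would have dimension at least $2^{d+1} > 2^{d+1}-2^{d-1}$ by the Hauptsatz, contradicting \eqref{it:d-linked-3}. Then, to see that $I^d_q K$ is linked, I would take a class $\alpha \in I^d_q K = I^d_q K / I^{d+1}_q K$ with anisotropic representative $q$. If $q = 0$ then $\alpha$ is the class of a hyperbolic $d$-fold Pfister form. If $q \neq 0$, then $\dim q \geq 2^d$ by the Hauptsatz while $\dim q < 2^{d+1}-2^{d-1}$ by \eqref{it:d-linked-3}, so the hole refinement forces $\dim q = 2^d$; consequently $q$ is similar to a $d$-fold Pfister form $\pi$, and since $[c\pi]$ and $[\pi]$ agree modulo $I^{d+1}_q K$ for every $c \in K^\times$ (cf. \cite[Example~4.10]{ElmanKarpenkoMerkurjev}) while $I^{d+1}_q K = 0$, we conclude $\alpha = [q] = [\pi]$. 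Hence $I^d_q K$ is linked, and together with $I^{d+1}_q K = 0$ this is exactly the statement that $K$ is top-$d$-linked.

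The step I expect to be the main obstacle is the appeal to the ``hole'' theorem for $I^d_q K$, i.e. the non-existence of anisotropic forms of dimension strictly between $2^d$ and $2^{d+1}-2^{d-1}$. For $d \leq 2$ this is not really needed---there the Hauptsatz, the bound in \eqref{it:d-linked-3}, and the fact that forms in $I^d_q K$ are even-dimensional already leave only the possibilities $0$ and $2^d$---but for $d \geq 3$ it is essential (already for $d = 3$ one must exclude, for instance, anisotropic $10$-dimensional forms in $I^3_q K$). Apart from invoking this result, together with the identification of dimension-$2^d$ forms in $I^d_q K$ as scalar multiples of Pfister forms, the rest is bookkeeping with the Hauptsatz and the uniqueness of anisotropic representatives.
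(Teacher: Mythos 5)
Your proof is correct, but your implication \eqref{it:d-linked-3} $\Rightarrow$ \eqref{it:d-linked-1} takes a genuinely different route from the paper's. You correctly isolate the key point: after establishing $I^{d+1}_qK = 0$ (which both arguments do in essentially the same way), one must turn the dimension bound into linkage. You do this by invoking the ``first hole'' theorem for $I^d_q$ to force every nonzero anisotropic class in $I^d_qK$ to have dimension exactly $2^d$, hence be similar (and then, modulo $I^{d+1}_qK=0$, equal) to a $d$-fold Pfister class; this verifies \Cref{D:Linkage} directly. The paper instead avoids the hole theorem entirely: it applies hypothesis \eqref{it:d-linked-3} to the single form $q_1 \perp -q_2$ for two given $d$-fold Pfister forms, deduces $i_W(q_1 \perp -q_2) > 2^{d-2}$, and then uses the Elman--Lam linkage theorem (\cite[Theorem 24.2 and Corollary 24.3]{ElmanKarpenkoMerkurjev}) to extract a common $(d-1)$-fold Pfister subform, concluding via \Cref{P:LinkageCharacterisation}. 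The trade-off is significant: the Elman--Lam machinery lives in the elementary, characteristic-free Chapter 24 of \cite{ElmanKarpenkoMerkurjev}, whereas the first-hole theorem for $d \geq 3$ is a deep result (Vishik, Karpenko) whose proof uses motivic Steenrod operations; in particular its characteristic-$2$ version is comparatively recent and is \emph{not} covered by the blanket citation ``see \cite{ElmanKarpenkoMerkurjev}''---and since the present paper needs the proposition for fields of characteristic $2$ as well, your argument as written has a genuine sourcing gap there, even though the statement you need is by now known. Your approach does buy a slightly stronger intermediate conclusion (every nonzero class of $I^d_qK$ is represented by a form similar to a Pfister form), and, as you note, for $d \leq 2$ parity plus the Hauptsatz suffice and no hole theorem is needed. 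The remaining steps (\eqref{it:d-linked-1} $\Rightarrow$ \eqref{it:d-linked-2} via uniqueness of anisotropic Witt representatives, and the scalar adjustment $[c\pi] \equiv [\pi] \bmod I^{d+1}_qK$) are fine and match the paper's framework.
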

\begin{proof}
\eqref{it:d-linked-1} $\Rightarrow$ \eqref{it:d-linked-2} follows from the definition of top-$d$-linked.

\eqref{it:d-linked-2} $\Rightarrow$ \eqref{it:d-linked-3} is obvious.

\eqref{it:d-linked-3} $\Rightarrow$ \eqref{it:d-linked-1}: If there would be an anisotropic $(d+1)$-fold Pfister form over $K$, then this would be a $2^{d+1}$-dimensional anisotropic quadratic form in $I^{d}_qK$, contradicting the assumption.

Let $q_1$ and $q_2$ be $d$-fold Pfister forms over $K$. Then the class of $q_1 \perp -q_2$ lies in $I^d_qK$, whence $i_W(q_1 \perp -q_2) > 2^{-1}(\dim(q_1 \perp -q_2) - (2^{d+1} - 2^{d-1})) = 2^{d-2}$. By \autocite[Theorem 24.2 and Corollary 24.3]{ElmanKarpenkoMerkurjev} $q_1$ and $q_2$ share a subform which is a $(d-1)$-fold Pfister form. In view of \autocite[Proposition 24.1]{ElmanKarpenkoMerkurjev} and \Cref{P:LinkageCharacterisation} we obtain that $I^d_q K$ is linked.
\end{proof}

We conclude the section with some known classes of examples of $\Li{d}$-fields of characteristic not $2$ for some $d$.
Some of the references we give do not explicitly make claims about linkage of Pfister forms, but instead talk about linkage of symbols in Milnor K-theory, or of symbols in Galois cohomology.
One can however straightforwardly translate linkage statements between these different setups in view of the resolved Milnor conjectures, which state that, for a field $K$ with $\charac(K) \neq 2$ and $n \in \nat^+$, there are canonical group isomorphims between the Milnor $K$-group (modulo $2$) $k_n K$, the Galois cohomology group $H^n(K, \zz/2\zz)$, and the group $I^n_q K/I^{n+1}_q K$, and these correspondences map symbols to symbols.
See \cite[Sections 5 and 16]{ElmanKarpenkoMerkurjev} for more on the history and the precise statements of the correspondences.
\begin{vbn}\label{E:linkage}
Let $K$ be a field with $\charac(K) \neq 2$, $d \in \nat$.
\begin{enumerate}
\item\label{it:global} If $K$ is a non-real global field, then $K$ is an $\Li{2}$-field.
\item\label{it:local} If $K$ is a $p$-adic field, then $K$ is an $\Li{2}$-field.
\item\label{it:PAC} If $K$ is pseudo-algebraically closed, then $K$ is an $\Li{1}$-field.
\item\label{it:Cd} If $K$ is a $C_d$-field (see e.g.~\cite[Section 21.2]{Fri08}), then $K$ is an $\Li{d}$-field.
In particular, if $K$ is algebraically closed, then $K$ is an $\Li{0}$-field, and if $K$ is finite, then $K$ is an $\Li{1}$-field.
\end{enumerate}
\end{vbn}
\begin{proof}
\eqref{it:global}
Let $F/K$ be a function field in one variable.
Since $I^3_q L = 0$ for all non-real global fields $L$, it follows by \Cref{P:cdFacts} that $I^4_q F = 0$.
\autocite[Theorem 1.1]{Suresh_ThirdGalCohom} states that every element of $H^3(F, \zz/2\zz)$ is a symbol, which in view of the Milnor conjectures translates to the desired statement.

\eqref{it:local} If $K$ is a $p$-adic field and $F/K$ a function field in one variable, then every $9$-dimensional quadratic form over $F$ is isotropic.
This was first shown for $p \neq 2$ in \cite{Parimala-Suresh-u-invariant}, although the proof contained a gap which was rectified in the appendix to \cite{Parimala-Suresh-Degree3}.
The case for $p \neq 2$ was also solved in \cite[Corollary 4.15]{HHK_ApplicationsPatchingToQuadrFormsAndCSAs}.
In \autocite[Theorem 3.4]{Leep_uInvariantPAdicFuncField} a proof is given in the general case, and furthermore in \cite[Theorem 4]{Parimala-Suresh-PeriodIndex} a proof is given specifically for the case $p=2$.
Regardless of which proof one prefers, the linkage of $I^3_q F$ then follows from \Cref{P:d-linked-characterisation}.
Alternatively, one can argue just as in \eqref{it:global}, since \autocite[Theorem 1.1]{Suresh_ThirdGalCohom} also shows in this case that every element of $H^3(F, \zz/2\zz)$ is a symbol.
In \Cref{T:higher-local} later on, we shall give yet another proof for the linkage of $I_q^3 F$, not using either of the above two arguments.

\eqref{it:PAC}
See \autocite[Theorem 5.4]{BGStrongLinkage} for a proof of linkage in the language of Milnor $K$-theory; this translates to the desired statement in view of the Milnor conjectures.

\eqref{it:Cd}
If $K$ is a $C_d$-field, then every function field in one variable $F/K$ is a $C_{d+1}$-field, whereby every $(2^{d+1}+1)$-dimensional quadratic form over such a field $F$ is isotropic, and the linkage statement follows from \Cref{P:d-linked-characterisation}.
\end{proof}

\section{Valued fields and residue forms}\label{sect:residue-forms}
In this section we will consider fields carrying a valuation.
When $K$ is a field and $v$ a valuation on $K$, we call the pair $(K, v)$ a valued field.
We denote by $vK$ the value group of $v$, by $\mc{O}_v$ the valuation ring, by $\mf{m}_v$ the maximal ideal of $\mc{O}_v$, and by $Kv$ the residue field $\mc{O}_v/\mf{m}_v$ of $v$.
We set $2vK = \lbrace 2\gamma \mid \gamma \in vK \rbrace$.
For an element $x \in \mc{O}_v$, we denote by $\ovl{x}^v$ the element $x + \mf{m}_v$ in $Kv$; we might simply write $\ovl{x}$ if there is no risk of confusion.
Similarly, when $f \in \mc{O}_v[X_1, \ldots, X_n]$ is a polynomial, $\ovl{f}^v$ (or simply $\ovl{f}$) denotes the polynomial in $Kv[X_1, \ldots, X_n]$ obtained by reducing the coefficients modulo $\mf{m}_v$.
When $\charac(Kv) = 2$, we call $(K, v)$ (or just $v$) \emph{dyadic}, otherwise we call it \emph{non-dyadic}.
We call a valued field \emph{henselian} if the valuation extends uniquely to every finite field extension; see \cite[Section 4.1]{Eng05} for equivalent characterisations.
A finite extension $(L,w)/(K,v)$ of valued fields is called \emph{inert} (some authors use the term \emph{unramified}) if the induced residue field extension $Lw/Kv$ is separable and $[Lw : Kv] = [L : K]$, and an algebraic extension $(L, w)/(K, v)$ will be called inert if every finite subextension is inert.
We write $K_v$ for the henselisation of $(K, v)$ (see \cite[Section 5.2]{Eng05}) and denote also by $v$ a fixed extension of $v$ to $K_v$.
For a henselian valued field $(K, v)$, we will occasionally talk about the maximal inert extension $K^{un}$ of $K$ (called the \emph{inertia field} in \cite[Section 5.2]{Eng05} but otherwise often called the \emph{maximal unramified extension}); it will always be clear from the context with respect to which valuation this is done.
We call a valuation \emph{discrete} if its value group is isomorphic to $\zz$.

Let $(K, v)$ be a henselian valued field. 
Let $q : V \to K$ be an anisotropic quadratic form defined over $K$.
For $\lambda \in vK$, define the following subsets of $V$:
\begin{displaymath}
V_\lambda = \lbrace x \in V \mid v(q(x)) \geq \lambda \rbrace \qquad \text{and} \qquad V^\circ_\lambda = \lbrace x \in V \mid v(q(x)) > \lambda \rbrace.
\end{displaymath}
These are $\mc{O}_v$-submodules of $V$ (by the Schwarz Inequality, see e.g. \cite[Lemma 9]{SpringerTameQuadratic}) and $V_\lambda/V^\circ_\lambda$ is naturally a $Kv$-vector space.
Now given $a \in K^\times$ with $v(a) = \lambda$ we can define an anisotropic quadratic form
$$\RF{a}{q} : V_\lambda/V^\circ_\lambda \to Kv : \ovl{x} \mapsto \ovl{a^{-1}q(x)}.$$
We call such a form $\RF{a}{q}$ a \emph{residue form} of $q$.

We will see now that residue forms completely determine the quadratic form in the non-dyadic case.
In the dyadic case, extra assumptions are required.
\begin{prop}\label{P:residueCharNot2}
Let $(K, v)$ be a non-dyadic henselian valued field, and fix a set $\Pi$ of representatives of $K^{\times}/v^{-1}(2vK)$. There is a group isomorphism
$ WK \to \bigoplus_{\pi \in \Pi} W(Kv) $
which maps the class of an anisotropic quadratic form $q$ to the class of $(\RF{\pi}{q})_{\pi \in \Pi}$.
In particular, two anisotropic quadratic forms over $K$ are isometric if and only if their residue forms are isometric.
\end{prop}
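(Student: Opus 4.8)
The plan is to reduce everything to diagonal forms and then organise a bookkeeping around a single input: Hensel's lemma. Since $\charac(Kv)\neq 2$ we have $v(2)=0$, hence $\charac(K)\neq 2$, so every non-singular quadratic form over $K$, over $\mc{O}_v$ and over $Kv$ is diagonalisable. The crucial consequences of the henselian, non-dyadic hypothesis are that $1+\mf{m}_v\subseteq (K^\times)^2$, together with the following \emph{anisotropy criterion}: a diagonal form $\langle u_1,\ldots,u_n\rangle$ with all $u_i\in\mc{O}_v^\times$ is isotropic over $K$ if and only if its reduction $\langle\ovl{u_1},\ldots,\ovl{u_n}\rangle$ is isotropic over $Kv$. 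The ``only if'' is the observation that a nontrivial zero can be rescaled to a primitive integral zero, which reduces to a nontrivial zero of $\langle\ovl{u_1},\ldots,\ovl{u_n}\rangle$; the ``if'' is Hensel's lemma in Newton form, solving for a variable whose reduced value is nonzero --- and it is precisely $v(2)=0$ that makes the relevant derivative a unit. This is the only place the hypotheses on $v$ enter.

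Next I would fix for each $a\in K^\times$ a factorisation $a=\pi u c^2$ with $\pi\in\Pi$, $u\in\mc{O}_v^\times$ and $c\in K^\times$; this exists because $\Pi$ represents $K^\times/v^{-1}(2vK)\cong vK/2vK$, and $\ovl{u}$ is well defined modulo squares in $Kv^\times$. Using a presentation of the Witt ring $WK$ by generators $\langle a\rangle$ ($a\in K^\times$) and relations --- notably $\langle a\rangle+\langle -a\rangle=0$, $\langle a\rangle\langle b\rangle=\langle ab\rangle$, and $\langle a\rangle+\langle b\rangle=\langle a+b\rangle+\langle(a+b)ab\rangle$ for $a+b\neq 0$ --- I would define a map by sending $\langle a\rangle$ to the tuple having $[\langle\ovl{u}\rangle]\in W(Kv)$ in the $\pi$-component and $0$ elsewhere, and check that it respects the relations. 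This yields a well-defined ring homomorphism $\phi\colon WK\to\bigoplus_{\pi\in\Pi}W(Kv)$, the target carrying the ring structure transported from $WK$ (a copy of the group ring $W(Kv)[vK/2vK]$ when $\Pi$ is chosen so that its image in $K^\times/(K^\times)^2$ is a subgroup). The relation-check splits into a ``same block'' case (where it reduces to the corresponding Witt relation over $Kv$) and a ``different block'' case (where $1+\mf{m}_v\subseteq(K^\times)^2$ lets one discard the cross terms); the one genuinely delicate point is the sub-case of the additive relation in which $a+b$ has strictly larger value than $a$ and $b$, so that cancellation occurs --- there one checks, again via $1+\mf{m}_v\subseteq(K^\times)^2$, that $(a+b)ab$ equals $-(\text{a square})$ times $a+b$, so that the relation collapses to $\langle a+b\rangle+\langle-(a+b)\rangle=0$, which maps to $0$.

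Granting $\phi$, the isomorphism statement is short. For surjectivity, the target is generated by classes $[\langle t\rangle]$ supported in a single block $\pi$, and lifting $t\in Kv^\times$ to $\tilde t\in\mc{O}_v^\times$ gives $\phi(\langle\pi\tilde t\rangle)$ equal to that generator. For injectivity, suppose $\phi([q])=0$; replace $q$ by its anisotropic representative and diagonalise, grouping the entries by block so that $q\cong\bigperp_{\pi\in\Pi}\pi\rho_\pi$ with each $\rho_\pi$ diagonal with entries in $\mc{O}_v^\times$. Then $\phi([q])=([\ovl{\rho_\pi}])_{\pi\in\Pi}$, so every $\ovl{\rho_\pi}$ is hyperbolic over $Kv$; on the other hand, the anisotropy criterion applied block by block (an isotropic $\ovl{\rho_\pi}$ would make $\rho_\pi$, hence $q$, isotropic) shows every $\ovl{\rho_\pi}$ is anisotropic, and an anisotropic hyperbolic form has dimension $0$; thus $q=0$ and $[q]=0$. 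That $\phi$ carries the class of an anisotropic $q$ to the class of $(\RF{\pi}{q})_{\pi\in\Pi}$ is the identification $\RF{\pi}{q}\cong\ovl{\rho_\pi}$, read off from this decomposition together with the definition of $V_\lambda/V^\circ_\lambda$.

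Finally, the ``in particular'' clause follows formally: if two anisotropic forms over $K$ are isometric they are Witt equivalent, so their $\phi$-images agree, so their residue forms are componentwise Witt equivalent over $Kv$; being anisotropic (by the criterion), these residue forms are therefore isometric. Conversely, if the residue forms of anisotropic $q,q'$ over $K$ are all isometric, then $\phi([q])=\phi([q'])$, so $[q]=[q']$ by injectivity, and as both forms are anisotropic they are isometric. The main obstacle in this programme is the relation-check for $\phi$ in the cancellation sub-case; once that and the anisotropy criterion are nailed down, the rest is formal bookkeeping, which nevertheless must be carried out with some care because $vK$ is allowed to be an arbitrary ordered abelian group rather than $\zz$.
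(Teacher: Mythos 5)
Your argument is correct, but it is worth saying up front that the paper does not prove this statement at all: it simply cites Tietze (Satz 3.1), so you have reconstructed the classical proof rather than paralleled the paper's. What you do is essentially the standard argument behind the generalised Springer theorem: reduce to diagonal forms, isolate the two Hensel inputs ($1+\mf{m}_v\subseteq (K^\times)^2$ and the unit-form anisotropy criterion, which is the paper's \Cref{P:anisotropicResidue}), and define the residue homomorphism on the generators-and-relations presentation of $WK$, with the only genuinely delicate verification being the cancellation sub-case of the chain relation --- which you handle correctly by showing $(a+b)ab$ is $-(a+b)$ times a square. Your injectivity argument via the block decomposition $q\cong\bigperp_\pi \pi\rho_\pi$ and the identification $\RF{\pi}{q}\cong\ovl{\rho_\pi}$ is also right; the one step you leave implicit is that for distinct $\pi,\pi'\in\Pi$ the values $v(\pi\rho_\pi(x_\pi))$ and $v(\pi'\rho_{\pi'}(x_{\pi'}))$ are incongruent modulo $2vK$, hence distinct, so that $v(q(x))$ is the minimum of the block values and the spaces $V_\lambda/V_\lambda^\circ$ decompose block by block --- this is exactly the paper's \Cref{P:binaryFormValuations} and deserves a line. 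Two further remarks. First, you are right to flag that the componentwise ring structure on $\bigoplus_{\pi\in\Pi}W(Kv)$ is \emph{not} the one for which the map is multiplicative (the correct one is a possibly twisted group ring over $vK/2vK$); the paper's phrasing is loose here, but it also explicitly disclaims any interest in the multiplication, so for every application in the paper only the additive isomorphism and the ``in particular'' clause matter, and those you have established. Second, your appeal to the presentation of $WK$ by the relations $\langle ab^2\rangle=\langle a\rangle$, $\langle a\rangle+\langle -a\rangle=0$ and $\langle a\rangle+\langle b\rangle=\langle a+b\rangle+\langle (a+b)ab\rangle$ is legitimate (it is Witt's chain-equivalence presentation, valid since $\charac(K)\neq 2$), but it is a nontrivial theorem and should be cited as such rather than treated as folklore. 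With those two citations added, your proof is a complete and self-contained substitute for the reference to Tietze.
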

\begin{proof}
This is part of \autocite[Satz 3.1]{Tietze}.
\end{proof}
In the dyadic case, it might happen that, even when $q$ is itself non-singular, some of its residue forms are singular (i.e.~not non-singular).
\begin{prop}\label{P:TignolCharacteriseTameQuadratic}
Let $(K, v)$ be a dyadic henselian valued field, and fix a set $\Pi$ of representatives of $K^{\times}/v^{-1}(2vK)$.
Let $q$ be an anisotropic quadratic form over $K$. The following are equivalent:
\begin{enumerate}[$(i)$]
\item\label{it:inerthyp1} $q$ is hyperbolic over $K^{un}$,
\item\label{it:inerthyp2} $q \cong a_1 \llangle b_1 ]]_K \perp \ldots \perp a_n \llangle b_n ]]_K$ for some $n \in \nat$, $a_1, \ldots, a_n \in K^\times$ and $b_1, \ldots, b_n \in \mathcal{O}_v$,
\item\label{it:inerthyp3} $\RF{\pi}{q}$ is non-singular for all $\pi \in \Pi$.
\end{enumerate}
\end{prop}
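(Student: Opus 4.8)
The plan is to prove the cyclic chain $(ii) \Rightarrow (i) \Rightarrow (iii) \Rightarrow (ii)$; the last implication will also show, as a by-product, that $(iii)$ forces $q$ to be non-singular.

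For $(ii) \Rightarrow (i)$, since hyperbolicity over a given field is inherited by orthogonal sums it suffices to treat a single binary form $a[1,b]$ with $a \in K^\times$ and $b \in \mathcal{O}_v$. Because $v(2) > 0$ we have $v(1 - 4b) = 0$, so $[1,b]$ is a \emph{non-singular} binary quadratic form, and a non-singular binary quadratic form is hyperbolic as soon as it is isotropic; it therefore remains to see that $[1,b]$ becomes isotropic over $K^{un}$. Now $[1,b]$ is isometric to the norm form of the quadratic étale $K$-algebra $K[T]/(T^2+T+b)$, and $T^2+T+b$ is separable (its derivative $2T+1$ is a unit on $\mathcal{O}_v$, again because $v(2)>0$) with coefficients in $\mathcal{O}_v$; hence its reduction splits over a separable closure of $Kv$, and by Hensel's Lemma $T^2+T+b$ acquires a root in $K^{un}$. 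Thus $[1,b]$, and with it $a[1,b]$ and $q$, is isotropic, hence hyperbolic, over $K^{un}$.

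The implication $(i) \Rightarrow (iii)$ is the heart of the matter, and I would argue it by contraposition. Suppose $\RF{\pi}{q}$ is singular for some $\pi \in \Pi$; replacing $q$ by $\pi^{-1}q$ — which permutes the family $(\RF{\rho}{q})_\rho$ and changes neither the hypothesis nor hyperbolicity over $K^{un}$ — we may assume $\pi = 1$, so $\lambda = 0$. Restricting $\RF{1}{q}$ to the radical of its bilinear form produces a nonzero anisotropic totally singular form $\tau \cong \langle \ovl{c_1}, \ldots, \ovl{c_r}\rangle$ over $Kv$, with $\ovl{c_1}, \ldots, \ovl{c_r}$ linearly independent over $Kv\pow{2}$. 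Two points then combine. First, since a separable closure $K^{un}v$ of $Kv$ is separable over $Kv$, linear independence over $Kv\pow{2}$ persists over $(K^{un}v)\pow{2}$, so $\tau$ remains anisotropic — in particular non-hyperbolic — after base change to $K^{un}v$. Second, the inert extension $K^{un}/K$ leaves the value group unchanged, and one checks, working directly with the submodules $V_\lambda, V^\circ_\lambda$ or with the value-class decomposition below, that up to Witt equivalence over $K^{un}v$ the residue forms of $q$ computed over $K^{un}$ are the base changes of those computed over $K$. If $q$ were hyperbolic over $K^{un}$, it would have trivial anisotropic part there and hence all of its residue forms over $K^{un}$ would be Witt-trivial; but the base change of $\RF{1}{q}$ contains the non-hyperbolic summand $\tau_{K^{un}v}$, a contradiction. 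The main obstacle I anticipate lies exactly in making this second point precise in the dyadic setting, which is why I would route it through the explicit residue-form formalism of the present section together with the analyses of \cite{SpringerTameQuadratic,Tietze,MMW91}.

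Finally, $(iii) \Rightarrow (ii)$. Assume every $\RF{\pi}{q}$ is non-singular. This first forces $q$ to be non-singular — a nonzero quasilinear summand of $q$ would contribute a singular part to the residue form attached to its value class — so $q$ has even dimension. By the residue-form machinery, $q$ decomposes as $q \cong \bigperp_{\pi \in \Pi} \pi q_\pi$, where each $q_\pi$ is $\mathcal{O}_v$-integral on a suitable basis and reduces, in that basis, to a form isometric to $\RF{\pi}{q}$; by hypothesis each such reduction is a non-singular, necessarily even-dimensional, quadratic form over $Kv$. Since $\charac(Kv) = 2$, every such form is an orthogonal sum of binary forms $\ovl{d_j}[1, \ovl{e_j}]$ with $\ovl{d_j} \in (Kv)^\times$ (see \cite[Chapter II]{ElmanKarpenkoMerkurjev}). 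Lifting the $\ovl{d_j}, \ovl{e_j}$ to $d_j \in \mathcal{O}_v^\times$ and $e_j \in \mathcal{O}_v$, the form $\bigperp_j d_j[1, e_j]$ over $K$ is non-singular with reduction isometric to $\RF{\pi}{q}$, hence isometric to $q_\pi$, using that anisotropic quadratic forms over a henselian valued field all of whose residue forms are non-singular are determined up to isometry by those residue forms — this is \Cref{P:residueCharNot2} in the non-dyadic case, and in the dyadic case one appeals to \cite{SpringerTameQuadratic,Tietze,MMW91}. Therefore $\pi q_\pi \cong \bigperp_j (\pi d_j)[1, e_j]$ with all $e_j \in \mathcal{O}_v$, and assembling over $\pi \in \Pi$ exhibits $q$ in the shape required by $(ii)$.
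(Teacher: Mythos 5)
The paper's own ``proof'' of this proposition consists of exactly two citations: the equivalence of (i) and (ii) is quoted from \cite[Corollary 17]{SpringerTameQuadratic}, and that of (ii) and (iii) from \cite[Satz 4.1a]{Tietze}. Measured against that, your implication (ii) $\Rightarrow$ (i) is a correct and genuinely self-contained argument (the computation $v(1-4b)=0$, non-singularity of $[1,b]$, separability of the reduction of $T^2+T+b$, and Hensel over $K^{un}$ all check out), which is more than the paper offers for that step. But for the remaining two implications you have not actually escaped the literature: the two points you explicitly flag as needing to be ``routed through'' \cite{SpringerTameQuadratic,Tietze,MMW91} are not technical details of your argument --- they \emph{are} the cited theorems. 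So as a whole the proposal is the same citation-based proof as the paper's, with one leg made explicit and a reorganisation into the cycle (ii) $\Rightarrow$ (i) $\Rightarrow$ (iii) $\Rightarrow$ (ii).

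Two of the deferred steps deserve to be named as genuine gaps if the argument were meant to stand alone. First, in (i) $\Rightarrow$ (iii), the assertion that ``up to Witt equivalence over $K^{un}v$ the residue forms of $q$ computed over $K^{un}$ are the base changes of those computed over $K$'' is not well-posed in the situation where you need it: $\RF{1}{q}$ is singular by hypothesis and the paper has no notion of Witt equivalence for singular forms, and once $q_{K^{un}}$ is isotropic the modules $V_\lambda$, $V_\lambda^\circ$ are no longer available (the paper defines residue forms of isotropic forms only through the anisotropic part, which is precisely the object you are trying to show is non-zero). The workable route is to descend to a finite inert subextension $L/K$ over which $q$ becomes isotropic and run a Springer-type argument on valuations of coordinates; the dyadic difficulty, which is the content of Springer's and Tietze's results, is that the polar form couples vectors from different value classes. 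Second, in (iii) $\Rightarrow$ (ii), the orthogonal decomposition $q \cong \bigperp_{\pi} \pi q_\pi$ into $\mathcal{O}_v$-integral blocks with unimodular reduction is not produced by the residue-form formalism of Section 4 (which only yields a filtration of $V$); obtaining an actually orthogonal value-class decomposition in residue characteristic $2$ is exactly \cite[Satz 4.1]{Tietze}, i.e.\ the statement being proved, so invoking it there is circular. (Minor point: the uniqueness statement you appeal to at the end is \Cref{P:residueChar2}, not \Cref{P:residueCharNot2}, and it too is the same Tietze citation.)
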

\begin{proof}
We first observe that a quadratic extension $(L, w)/(K, v)$ is inert if and only if $L$ is the splitting field of a polynomial $T^2 - T - b$ with $b \in \mc{O}_v$; this follows from the fact that, since $\charac(Kv) = 2$, all polynomials of the form $T^2 - T - \alpha$ for $\alpha \in Kv$ are separable, and conversely every quadratic extension of $Kv$ is the splitting field of some polynomial of the form $T^2 - T - \alpha$ for $\alpha \in Kv$.
In particular, anisotropic $1$-fold Pfister forms of the form $\llangle b ]]_K$ for $b \in \mc{O}_v$ are precisely the norm forms of inert quadratic extensions of $K$.
With this in mind, the equivalence between \eqref{it:inerthyp1} and \eqref{it:inerthyp2} is given by \autocite[Corollary 17]{SpringerTameQuadratic}.

The equivalence between \eqref{it:inerthyp2} and \eqref{it:inerthyp3} is \autocite[Satz 4.1a]{Tietze}.
\end{proof}
For a dyadic valued field $(K, v)$, denote by $\IS{v}K$ the subgroup of $I_qK$ of Witt equivalence classes of quadratic forms satisfying property \eqref{it:inerthyp2} in \Cref{P:TignolCharacteriseTameQuadratic}.
By a direct approximation argument, one can verify that $\IS{v}K$ is the preimage of $\IS{v}K_v$ under the restriction homomorphism $I_qK \to I_qK_v$.
For a non-dyadic valued field, we simply set $\IS{v}K = I_qK$.
\begin{prop}\label{P:residueChar2}
Let $(K, v)$ be a dyadic henselian valued field, and fix a set $\Pi$ of representatives of $K^{\times}/v^{-1}(2vK)$.
There is a group isomorphism
$ \partial_v : \IS{v}K \to \bigoplus_{\pi \in \Pi} I_q Kv $
which maps the class of an anisotropic quadratic form $q$ in $\IS{v}K$ to the class of $(\RF{\pi}{q})_{\pi \in \Pi}$.
In particular, two anisotropic quadratic forms over $K$ whose residue forms are all non-singular, are isometric if and only if their residue forms are isometric.
\end{prop}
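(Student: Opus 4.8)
The plan is to adapt the proof of \Cref{P:residueCharNot2} — for which \cite{Tietze} is cited — to the dyadic setting, using \Cref{P:TignolCharacteriseTameQuadratic} in place of the elementary structure theory available in the non-dyadic case. I begin with two observations. First, $Kv$ has characteristic $2$, so every non-singular quadratic form over $Kv$ is even-dimensional and defines a class in $I_qKv$; and by definition any residue form of an anisotropic form is anisotropic. Second, a class in $\IS{v}K$ has a unique anisotropic representative, and this representative still satisfies condition $(ii)$ of \Cref{P:TignolCharacteriseTameQuadratic} (equivalently, has all residue forms non-singular), since by that proposition condition $(ii)$ amounts to being hyperbolic over $K^{un}$, which is a property of the Witt class. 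Thus $\partial_v$ will make sense once we know that the residue forms of such a representative define classes in $I_qKv$, all but finitely many of which vanish.

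The structural heart of the argument is the following. Let $q$ be anisotropic with $[q]\in\IS{v}K$; by \Cref{P:TignolCharacteriseTameQuadratic} write $q \cong a_1[1,b_1]\perp\cdots\perp a_n[1,b_n]$ with $a_i\in K^\times$, $b_i\in\mc{O}_v$. For each $i$ let $\pi_i\in\Pi$ be the representative with $v(a_i)\in v(\pi_i)+2vK$, and choose $c_i\in K^\times$ with $v(c_i^2)=v(a_i)-v(\pi_i)$, so that $u_i:=a_i/(\pi_ic_i^2)$ is a unit; since $\phi\cong c^2\phi$ for every quadratic form $\phi$ and every $c\in K^\times$, we get $a_i[1,b_i]\cong\pi_i\,(u_i[1,b_i])$ with $u_i[1,b_i]$ being $\mc{O}_v$-integral of non-singular reduction. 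Grouping the summands by the coset of $v(a_i)$ in $vK/2vK$ yields
\[ q \;\cong\; \bigperp_{\pi\in\Pi}\pi\,q_\pi, \]
where each $q_\pi$ is $\mc{O}_v$-integral with $\ovl{q_\pi}^v$ non-singular, and $q_\pi$ is zero for all but finitely many $\pi$. Since $q$ is anisotropic and $\ovl{q_\pi}^v$ is non-singular, Hensel's lemma forces $\ovl{q_\pi}^v$ to be anisotropic for every $\pi$ (an isotropic vector of $\ovl{q_\pi}^v$ would lift to one of $q_\pi$, hence of $q$). As the nonzero values of $\pi\,q_\pi$ lie in the coset $v(\pi)+2vK$ and these cosets are pairwise distinct, there is no cancellation among the summands, and comparing with the definition of the residue form one reads off $\RF{\pi}{q}\cong\ovl{q_\pi}^v$ for all $\pi\in\Pi$. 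In particular each $\RF{\pi}{q}$ is anisotropic and non-singular, so $\partial_v([q]):=([\RF{\pi}{q}])_{\pi\in\Pi}$ is a well-defined element of $\bigoplus_{\pi\in\Pi}I_qKv$.

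The remaining points are then routine. \emph{Homomorphism:} for anisotropic $q_1,q_2$ with classes in $\IS{v}K$ and decompositions $q_i\cong\bigperp_\pi\pi\,q_{i,\pi}$, one has $q_1\perp q_2\cong\bigperp_\pi\pi\,(q_{1,\pi}\perp q_{2,\pi})$, and applying over the henselian ring $\mc{O}_v$ the Witt decomposition of the non-singular form $q_{1,\pi}\perp q_{2,\pi}$ — compatibly with reduction, again by Hensel — gives $q_{1,\pi}\perp q_{2,\pi}\cong R_\pi\perp\mathbb{H}^{m_\pi}$ with $R_\pi$ $\mc{O}_v$-integral and $\ovl{R_\pi}^v$ anisotropic; hence the anisotropic part of $q_1\perp q_2$ is $\bigperp_\pi\pi\,R_\pi$ and $\RF{\pi}{q_1\perp q_2}\cong\ovl{R_\pi}^v$ represents $[\RF{\pi}{q_1}]+[\RF{\pi}{q_2}]$ in $I_qKv$. \emph{Injectivity:} if $\partial_v([q])=0$ then each $\RF{\pi}{q}\cong\ovl{q_\pi}^v$ is both hyperbolic and anisotropic, hence zero, so $q$ is zero. \emph{Surjectivity:} given $(\psi_\pi)_\pi\in\bigoplus_\pi I_qKv$, replace each $\psi_\pi$ by its anisotropic representative, lift it to an $\mc{O}_v$-integral non-singular form $\widetilde{\psi_\pi}$ over $K$ by lifting the coefficients of a decomposition into non-singular binary forms, and set $q=\bigperp_\pi\pi\,\widetilde{\psi_\pi}$; after an evident rescaling this has the shape required by condition $(ii)$ of \Cref{P:TignolCharacteriseTameQuadratic}, so $[q]\in\IS{v}K$, and by the structural step $\partial_v([q])=([\psi_\pi])_\pi$. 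Finally, the ``in particular'' clause: two anisotropic forms all of whose residue forms are non-singular have their classes in $\IS{v}K$ by \Cref{P:TignolCharacteriseTameQuadratic}, so they are isometric if and only if these classes agree, if and only if (by injectivity of $\partial_v$) all their residue forms are Witt-equivalent over $Kv$; and anisotropic forms which are Witt-equivalent are isometric.

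The main obstacle, used repeatedly above, is the ``rigidity'' of non-singular quadratic forms over the henselian valuation ring $\mc{O}_v$: that isotropy, and the splitting off of hyperbolic planes, pass between such a form and its reduction. For valued fields of residue characteristic $2$ with an arbitrary (possibly non-discrete, non-$2$-divisible) value group this requires some care, but it is precisely the ingredient already underlying \Cref{P:residueCharNot2} and \Cref{P:TignolCharacteriseTameQuadratic} and can be assembled from the Hensel-type results in \cite{SpringerTameQuadratic,Tietze}.
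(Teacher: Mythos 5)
Your proof is correct, but it takes a genuinely different route from the paper: the paper disposes of this proposition with a single citation to Tietze's Satz 4.1, whereas you reconstruct the argument from scratch. Your reconstruction is the natural one and uses only ingredients already present in the surrounding text: the decomposition $q \cong a_1[1,b_1]\perp\cdots\perp a_n[1,b_n]$ from \Cref{P:TignolCharacteriseTameQuadratic}\,(ii), rescaling by squares to sort the summands into value cosets, the value computation of \Cref{P:binaryFormValuations} to see that the cosets cannot interfere, and the Hensel argument of \Cref{P:anisotropicResidue} to identify $\RF{\pi}{q}$ with $\ovl{q_\pi}$. Your preliminary observation that condition (ii) is a property of the Witt class (via the equivalence with hyperbolicity over $K^{un}$) is exactly the point needed to make $\partial_v$ well defined on anisotropic representatives, and your treatment of surjectivity via lifting a char-$2$ decomposition into binary forms $\bar u_i[1,\bar b_i]$ is fine. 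The one ingredient you use that the paper nowhere states is the integral Witt decomposition over the henselian valuation ring $\mc{O}_v$ --- that a form with non-singular isotropic reduction splits off a hyperbolic plane defined over $\mc{O}_v$, with unimodular orthogonal complement --- which you need in the homomorphism step to identify the anisotropic part of $q_1\perp q_2$ coset by coset. You correctly flag this as the load-bearing step; it follows by iterating the root-lifting argument in the proof of \Cref{P:anisotropicResidue} (the lifted isotropic vector is primitive and pairs to a unit under the polar form, so it generates a unimodular hyperbolic plane), but if this proof were to replace the citation, that lemma would have to be written out. What the citation buys is brevity; what your argument buys is a self-contained proof valid for arbitrary value groups, making the dependence on \cite{Tietze} unnecessary.
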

\begin{proof}
This is \autocite[Satz 4.1]{Tietze}.
\end{proof}

\begin{prop}\label{P:Pfister-when-good-slot}
Let $(K, v)$ be a dyadic henselian valued field, $d \in \nat^+$, $q$ an anisotropic $d$-fold Pfister form over $K$.
The following are equivalent.
\begin{enumerate}[(i)]
\item\label{it:good-slot1} $[q] \in \IS{v}K$,
\item\label{it:good-slot2} there exists an inert extension $L/K$ with $[L : K] \leq 2$ such that $q_L$ is isotropic,
\item\label{it:good-slot3} $q \cong \llangle a_1, \ldots, a_{d-1}, b]]_K$ for some $a_1, \ldots, a_{d-1}, b \in K^\times$ such that $v(b) = v(1+4b) = 0$.
\end{enumerate}
\end{prop}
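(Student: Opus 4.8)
The plan is to prove the cycle \eqref{it:good-slot3} $\Rightarrow$ \eqref{it:good-slot2} $\Rightarrow$ \eqref{it:good-slot1} $\Rightarrow$ \eqref{it:good-slot3}, only the last of which requires genuine effort. Throughout I use that an isotropic Pfister form is hyperbolic \autocite[Corollary 9.10]{ElmanKarpenkoMerkurjev}, that a $1$-fold quadratic Pfister subform of a $d$-fold quadratic Pfister form occurs as a slot \autocite[Section 24]{ElmanKarpenkoMerkurjev}, and the reformulation of \eqref{it:good-slot1} as ``$q_{K^{un}}$ is hyperbolic'': for dyadic $v$ this is the equivalence of \eqref{it:inerthyp1} and \eqref{it:inerthyp3} in \Cref{P:TignolCharacteriseTameQuadratic}, and for non-dyadic $v$ it follows from \Cref{P:residueCharNot2}, an anisotropic form over $K$ being hyperbolic over $K^{un}$ precisely when all of its residue forms are hyperbolic over the separable closure of $Kv$.

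For \eqref{it:good-slot3} $\Rightarrow$ \eqref{it:good-slot2}: given $q \cong \llangle a_1, \ldots, a_{d-1}, b]]_K$ with $v(b) = v(1+4b) = 0$, the quadratic \'etale $K$-algebra $L = K[T]/(T^2 - T - b)$ has unit discriminant $1+4b$, so $L/K$ is inert of degree $\leq 2$; moreover $L$ is a field of degree $2$, for otherwise $T^2 - T - b$ has a root in $K$ and then $\llangle b]]_K$, hence $q$, is isotropic. Since $\llangle b]]_L$ splits, $q_L$ is hyperbolic, a fortiori isotropic. For \eqref{it:good-slot2} $\Rightarrow$ \eqref{it:good-slot1}: an inert extension of $K$ of degree at most $2$ embeds over $K$ into $K^{un}$, so if $q$ becomes isotropic over it, then $q_{K^{un}}$ is isotropic, hence hyperbolic, which is \eqref{it:good-slot1}. (One can also prove \eqref{it:good-slot2} $\Leftrightarrow$ \eqref{it:good-slot3} directly, via the kernel of $I_q K \to I_q L$ for a quadratic $L/K$, but the cycle is shorter.)

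The substance is \eqref{it:good-slot1} $\Rightarrow$ \eqref{it:good-slot3}. From $q_{K^{un}}$ hyperbolic, each residue form $\RF{\pi}{q}$ is non-singular and splits over the separable closure of $Kv$. The plan is to combine this with the explicit description of the residue forms of a quadratic Pfister form from Propositions \ref{P:binaryFormValuations}--\ref{P:Pfister-residue-computation}. Starting from any representation $q \cong \llangle c_1, \ldots, c_{d-1}, b]]_K$, normalise the slots (the $c_i$ modulo squares, the Artin--Schreier slot modulo $\{x^2 - x : x \in K\}$) so that each becomes either ``unramified'' or ``ramified in a fixed coset of $vK/2vK$''. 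A unit slot that becomes trivial is a square, hence (by Hensel) $q$ is isotropic, which is excluded; two slots sharing a non-trivial coset can be merged by a standard Pfister identity, producing an unramified slot that is non-trivial by the previous point; and if neither occurs, $q$ is ``totally ramified'' with its ramified slots in pairwise distinct cosets, forcing $\RF{1}{q}$ to equal the one-dimensional form $\langle 1 \rangle$ --- singular when $\charac(Kv) = 2$, odd-dimensional when $\charac(Kv) \neq 2$ --- contradicting $[q] \in \IS{v}K$. Thus $q$ is not totally ramified; extracting from the residue forms an actual $1$-fold quadratic Pfister divisor $\llangle b]]$ of $q$ with $v(b) = v(1+4b) = 0$ (a formality when $\charac(K) \neq 2$, but genuinely using the dyadic structure theory otherwise) and recalling that such a divisor occurs as a slot, we obtain a representation $q \cong \llangle a_1, \ldots, a_{d-1}, b]]_K$ as required.

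I expect \eqref{it:good-slot1} $\Rightarrow$ \eqref{it:good-slot3} to be the main obstacle. The slot normalisation and case analysis must be pushed through for all value groups and residue characteristics, but the serious difficulty is characteristic $2$: there a Kummer slot cannot be moved into the Artin--Schreier position, so a good $1$-fold quadratic Pfister divisor of $q$ must be read off directly from its residue forms, which is exactly what the preparatory results on quadratic forms over dyadic henselian valued fields (Propositions \ref{P:binaryFormValuations}--\ref{P:Pfister-residue-computation}, building on \cite{Tietze, SpringerTameQuadratic, MMW91, ChapmanMcKinnieSymbolLength, Andromeda-1}) are designed to provide.
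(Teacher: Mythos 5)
Your implications \eqref{it:good-slot3}$\Rightarrow$\eqref{it:good-slot2} and \eqref{it:good-slot2}$\Rightarrow$\eqref{it:good-slot1} agree with the paper. The problem is \eqref{it:good-slot1}$\Rightarrow$\eqref{it:good-slot3}, which carries all the content and which you leave as a plan rather than a proof. The paper closes the cycle by proving \eqref{it:good-slot2}$\Rightarrow$\eqref{it:good-slot3} directly: given the inert quadratic extension, lift an Artin--Schreier generator of the residue extension to obtain $b \in \mc{O}_v^\times$ with $1+4b \in \mc{O}_v^\times$ and $L = K[T]/(T^2-T-b)$; since $q_L$ is isotropic, hence hyperbolic, \cite[Proposition~22.1]{ElmanKarpenkoMerkurjev} shows that $\llangle b]]_K$ is a subform of $q$, and \cite[Proposition~24.1(1)]{ElmanKarpenkoMerkurjev} then yields $q \cong \llangle a_1, \ldots, a_{d-1}, b]]_K$. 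That single subform theorem does, uniformly in all residue characteristics, exactly the work your sketch defers to ``extracting from the residue forms an actual $1$-fold quadratic Pfister divisor $\llangle b]]$ of $q$ with $v(b)=v(1+4b)=0$''. Nothing in Propositions \ref{P:binaryFormValuations}--\ref{P:Pfister-residue-computation} supplies that extraction: those results compute residue forms of a Pfister form \emph{given} a presentation whose ramified slots sit in the bilinear part and whose Artin--Schreier slot is already a unit with unit discriminant, i.e.\ they presuppose the conclusion of \eqref{it:good-slot3}. Likewise, the ``normalisation of the Artin--Schreier slot modulo $\lbrace x^2 - x\rbrace$ relative to $v$'' on which your case analysis rests is not established anywhere in the paper, and in the dyadic case it is genuinely nontrivial (statements of this kind are what \cite{MMW91,Andromeda-1} prove under extra hypotheses; cf.\ \Cref{L:Z-valued-maxdim-MMW}). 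You mention the route via the quadratic extension in passing and dismiss it as longer; it is the paper's route, and it is the short one.

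A secondary issue: your reformulation of \eqref{it:good-slot1} as ``$q_{K^{un}}$ is hyperbolic'' fails when $v$ is non-dyadic, since then $\IS{v}K = I_qK$ by definition, so \eqref{it:good-slot1} holds for every $q$, while a totally ramified anisotropic Pfister form remains anisotropic over $K^{un}$. So even if your slot analysis were completed, your cycle would prove ``$q_{K^{un}}$ hyperbolic $\Rightarrow$ \eqref{it:good-slot3}'' rather than \eqref{it:good-slot1}$\Rightarrow$\eqref{it:good-slot3} in the non-dyadic case. (The paper treats the non-dyadic case separately as degenerate; whatever one thinks of that, your argument silently substitutes a strictly stronger condition for \eqref{it:good-slot1} there.)
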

\begin{proof}
$\eqref{it:good-slot2}\Leftrightarrow\eqref{it:good-slot1}$:
By \cite[Lemma 13 and Theorem 16]{SpringerTameQuadratic}, this
 is immediate from part \eqref{it:inerthyp1} of \Cref{P:TignolCharacteriseTameQuadratic} and the fact that isotropic Pfister forms are hyperbolic.

$\eqref{it:good-slot3} \Rightarrow\eqref{it:good-slot2}$: $\llangle a_1, \ldots, a_{d-1}, b]]_K$ is isotropic over the splitting field of $T^2 - T - b$, which is an inert extension of $K$ if $v(b) = v(1+4b) = 0$.

$\eqref{it:good-slot2}\Rightarrow\eqref{it:good-slot3}$: Given such an inert quadratic extension $L/K$, by lifting a representation of the residue field extension, we can find $b \in \mc{O}_v^\times$ with $1+4b \in \mc{O}_v^\times$ such that $L$ is the splitting field of $T^2 - T - b$.
By \cite[Proposition 22.11]{ElmanKarpenkoMerkurjev} $\llangle b ]]_K$ is similar to a subform of $q$, which by \cite[Proposition 24.1(1)]{ElmanKarpenkoMerkurjev} implies the existence of $a_1, \ldots, a_{d-1} \in K^\times$ such that $q \cong \llangle a_1, \ldots, a_{d-1}, b ]]_K$.
\end{proof}
We will now consider presentations of Pfister forms over valued fields, with the goal of understanding their residue forms, see \Cref{P:Pfister-residue-computation} below.
For the computations it will be useful to consider the $2$-torsion abelian group $vK/2vK$ as an $\ff_2$-vector space.
\begin{lem}\label{L:left-slot-finding}
Let $(K, v)$ be a valued field, $a_1, \ldots, a_{n+1} \in K^\times$.
If $v(a_{n+1}) \in \Span_{\ff_2} \lbrace v(a_1), \ldots, v(a_n) \rbrace \subseteq vK/2vK$, then there exists $a_1', \ldots, a_{n+1}' \in K^\times$ with $a_{n+1}' \in \mc{O}_v^\times$ and
$ \llangle a_1, \ldots, a_n, a_{n+1} \rrangle_K^b \cong \llangle a_1', \ldots, a_n', a_{n+1}' \rrangle_K^b$ as bilinear forms.
\end{lem}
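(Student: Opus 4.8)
The plan is to exploit the multiplicativity properties of bilinear Pfister forms, in particular the fact that a bilinear Pfister form is unchanged (up to isometry) when one of its slots is multiplied by a square, and, more usefully, when one "combines" two slots. I would first reduce to the case $n=1$ in the following sense: if $v(a_{n+1}) = \sum_{i \in S} v(a_i)$ in $vK/2vK$ for some subset $S \subseteq \{1, \ldots, n\}$, then setting $c = \prod_{i \in S} a_i$ we have $v(a_{n+1}) = v(c)$ in $vK/2vK$, i.e. $v(a_{n+1}c^{-1}) \in 2vK$, so there is $u \in K^\times$ with $v(a_{n+1}c^{-1}u^{-2}) = 0$; put $a_{n+1}' := a_{n+1}c^{-1}u^{-2} \in \mathcal{O}_v^\times$. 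The point is then to show $\llangle a_1, \ldots, a_n, a_{n+1}\rrangle_K^b \cong \llangle a_1, \ldots, a_n, a_{n+1}'\rrangle_K^b$, after which one takes $a_i' = a_i$ for $i \le n$.

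The key algebraic input is the \emph{common slot} / chain-equivalence behaviour of bilinear Pfister forms: multiplying a slot by a square does not change the isometry class, and for any $x, y \in K^\times$ one has $\llangle x, y\rrangle_K^b \cong \llangle x, -xy\rrangle_K^b \cong \llangle xy', y\rrangle_K^b$ type identities coming from the fact that $\llangle x \rrangle^b \otimes \llangle y \rrangle^b$ represents $xy$ and more generally $\llangle x, y\rrangle_K^b$ is isometric to $\llangle x', y'\rrangle_K^b$ whenever $\langle x, y, -xy\rangle \cong \langle x', y', -x'y'\rangle$ as bilinear forms, i.e. whenever they have the same pure part up to isometry. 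Concretely, I would use that for $c = \prod_{i \in S} a_i$, iterated application of the rule $\llangle \ldots, a_i, \ldots, a_j, \ldots \rrangle^b \cong \llangle \ldots, a_i, \ldots, a_i a_j, \ldots\rrangle^b$ (valid because $\langle a_i, a_j \rangle \cong \langle a_i, a_i a_j \cdot a_i^{-1}\cdot a_i\rangle$ — really: $\llangle a, b\rrangle^b$ and $\llangle a, ab\rrangle^b$ are isometric, a standard identity) lets one replace $a_{n+1}$ by $a_{n+1} \cdot \prod_{i\in S} a_i = a_{n+1}c$ while leaving the slots indexed by $S$ in place; and then multiplying the last slot by the square $u^{-2}$ — wait, I need $a_{n+1}' = a_{n+1}c^{-1}u^{-2}$, so I instead replace $a_{n+1}$ by $a_{n+1}\cdot \prod_{i \in S} a_i^{-1}$, which is legitimate since $a_i$ and $a_i^{-1}$ differ by a square and the "combine two slots" identity can be run with $a_i$ or $a_i^{-1}$ interchangeably. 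After the slot-combination step the last slot has value $v(a_{n+1}) - \sum_{i\in S} v(a_i) \in 2vK$, and one final square-scaling brings it into $\mathcal{O}_v^\times$.

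The main obstacle I anticipate is bookkeeping rather than conceptual: making the "combine two slots" identity precise over a general field (not assuming $\charac K \ne 2$, since the paper works in arbitrary characteristic) and iterating it cleanly over the index set $S$ without disturbing the other slots. The cleanest route is probably to invoke the general presentation fact that, for a bilinear Pfister form $\llangle a_1, \ldots, a_n\rrangle^b$ with pure part (the complementary subform of the $1$-dimensional unit form) $\mathfrak{p}$, multiplying one slot by another slot or by a square yields an isometric Pfister form because the pure parts remain isometric — this is \cite[Section 6]{ElmanKarpenkoMerkurjev}. Granting that, the proof is: choose $S$ with $\sum_{i \in S} v(a_i) = v(a_{n+1})$ in $vK/2vK$; replace $a_{n+1}$ by $a_{n+1}\prod_{i \in S} a_i$ (isometric Pfister form, other slots untouched, and now $v$ of the modified last slot lies in $2vK$); then replace it further by $a_{n+1}\prod_{i \in S} a_i \cdot u^2$ for suitable $u \in K^\times$ to land in $\mathcal{O}_v^\times$; finally relabel. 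Since the first $n$ slots were never altered, $a_i' = a_i$ works for $i \le n$, completing the proof.
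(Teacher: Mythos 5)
Your overall strategy is sound and genuinely different from the paper's: you fix the first $n$ slots and modify only the last one by absorbing $\prod_{i \in S} a_i$ into it, whereas the paper argues by induction on $n$ using the identity $\llangle a, b \rrangle^b_K \cong \llangle a+b, -ab \rrangle^b_K$ from \cite[Lemma 4.15]{ElmanKarpenkoMerkurjev}, which reshuffles all the slots (the lemma's statement permits this, since it allows every $a_i'$ to differ from $a_i$). However, the identity you lean on is wrong as stated: $\llangle a, b \rrangle^b_K \cong \llangle a, ab \rrangle^b_K$ fails in characteristic not $2$. Over $\rr$ one has $\llangle -1, -1\rrangle^b \cong \langle 1,1,1,1\rangle$ of signature $4$, while $\llangle -1, (-1)(-1)\rrangle^b = \llangle -1, 1\rrangle^b \cong \langle 1,1,-1,-1\rangle$ of signature $0$. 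Your fallback justification via pure parts fails for the same example ($\langle 1,1,1\rangle \not\cong \langle 1,-1,-1\rangle$), and the intermediate claim $\langle a_i, a_j\rangle \cong \langle a_i, a_i a_j \cdot a_i^{-1}\cdot a_i\rangle$ cannot be right either, since $\langle a, b\rangle$ and $\langle a, ab\rangle$ already have different determinants in general. The correct standard identity carries a sign: $\llangle a, b\rrangle^b_K \cong \llangle a, -ab\rrangle^b_K$.

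Fortunately the sign is harmless for your purposes: iterating the corrected identity over $i \in S$ (after permuting slots to be adjacent, which is legitimate by the symmetry of Pfister forms) replaces $a_{n+1}$ by $(-1)^{\lvert S\rvert}\bigl(\prod_{i\in S}a_i\bigr)a_{n+1}$, whose value is congruent to $v(a_{n+1}) + \sum_{i\in S}v(a_i) \equiv 0 \bmod 2vK$, and a final rescaling by a square lands the last slot in $\mc{O}_v^\times$. With that one correction your argument is complete, works in all characteristics, and in fact proves slightly more than the paper's inductive version, namely that one may take $a_i' = a_i$ for $i \leq n$.
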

\begin{proof}
We proceed by induction on $n$.
For $n = 0$ there is little to show: if $v(a_1) \in 2vK$, then there exists $b \in K^\times$ with $v(a_1) = 2v(b) = v(b^2)$, and then $\llangle a_1 \rrangle_K^b \cong \llangle a_1 b^{-2} \rrangle_K^b$, so we may set $a_1' = a_1b^{-2}$.

For the induction step, assume that $v(a_{n+1}) \in \Span_{\ff_2} \lbrace v(a_1), \ldots, v(a_n) \rbrace$ and that $v(a_{n+1}) \not\in 2vK$, otherwise we can conclude as before.
Since we may permute the $a_i$'s (using that $\llangle a_1, \ldots, a_i, a_{i+1}, \ldots, a_{n+1} \rrangle_K^b \cong \llangle a_1, \ldots, a_{i+1}, a_i, \ldots, a_{n+1} \rrangle_K^b$), we may assume without loss of generality that $v(-a_{n}a_{n+1}) = v(a_{n}) + v(a_{n+1}) \in \Span_{\ff_2} \lbrace v(a_1), \ldots, v(a_{n-1}) \rbrace$.
Now setting $a_1' = a_n + a_{n+1}$, $a_i' = a_{i-1}$ for $1 < i \leq n$ and $a_{n+1}' = -a_na_{n+1}$, we have $$\llangle a_1, \ldots, a_{n+1} \rrangle_K^b = \llangle a_1, \ldots, a_{n-1}, a_1', a_{n+1}' \rrangle_K^b = \llangle a_1', a_2', \ldots, a_{n}', a_{n+1}' \rrangle_K^b,$$
where we used the computation rules from \cite[Lemma 4.15]{ElmanKarpenkoMerkurjev}.
Note that $v(a_{n+1}') \in \Span_{\ff_2} \lbrace v(a_2'), \ldots, v(a_n') \rbrace$.
Hence we may apply the induction hypothesis to $\llangle a_{2}', \ldots, a_{n+1}' \rrangle_K^b$ to find $a_2'', \ldots, a_{n+1}'' \in K^\times$ with $v(a_{n+1}'') = 0$ such that $\llangle a_{2}', \ldots, a_{n+1}' \rrangle_K^b = \llangle a_{2}'', \ldots, a_{n+1}'' \rrangle_K^b$ and thus in particular $\llangle a_1, \ldots, a_{n+1} \rrangle_K^b = \llangle a_1', a_2'', \ldots, a_{n+1}'' \rrangle_K^b$, as desired.
\end{proof}
The following two propositions will prove useful for explicitly computing residue forms.
Here, we consider quadratic forms over a valued field which can be presented by homogeneous degree $2$ polynomials with coefficients in the valuation ring; we will use the terminology (an)isotropic, non-singular, etc.~for the quadratic form corresponding to the given polynomial, see \Cref{sect:QF-preliminaries}.
\begin{prop}\label{P:binaryFormValuations}
Let $(K, v)$ be a valued field, $n \in \nat$. Let $f(X_1, \ldots, X_n) \in \mathcal{O}_v[X_1, \ldots, X_n]$ be a quadratic form such that $\ovl{f} \in Kv[X_1, \ldots, X_n]$ is anisotropic.
For any elements $a_1, \ldots, a_n \in K$ we have that
\begin{displaymath}
v(f(a_1, \ldots, a_n)) = 2\min \lbrace v(a_i) \mid i \in \lbrace 1, \ldots, n \rbrace \rbrace.
\end{displaymath}
\end{prop}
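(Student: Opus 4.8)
The plan is to work coordinate-wise in the valuation and reduce everything to the hypothesis that $\ovl f$ is anisotropic over $Kv$. Write $\mu = \min\{v(a_i) \mid 1 \leq i \leq n\}$; we may assume all $a_i \neq 0$ (if some $a_i = 0$, simply discard that variable, noting this does not change the value of $f$ at the given point and does not affect anisotropy of $\ovl f$ restricted to the remaining variables, since a subform of an anisotropic form is anisotropic). The inequality $v(f(a_1,\ldots,a_n)) \geq 2\mu$ is the easy half: since $f$ has coefficients in $\mc O_v$ and is homogeneous of degree $2$, every monomial of $f(a_1,\ldots,a_n)$ is a product of a coefficient in $\mc O_v$ with two of the $a_i$, hence has value at least $2\mu$, and the ultrametric inequality gives $v(f(a_1,\ldots,a_n)) \geq 2\mu$.

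For the reverse inequality, I would pick an index $j$ with $v(a_j) = \mu$ and pick $c \in K^\times$ with $v(c) = \mu$ (for instance $c = a_j$). Set $b_i = a_i/c \in \mc O_v$ for each $i$; then $\min_i v(b_i) = 0$, so the reductions $\ovl{b_i} \in Kv$ are not all zero. By homogeneity, $f(a_1,\ldots,a_n) = c^2 f(b_1,\ldots,b_n)$, and since $f$ has coefficients in $\mc O_v$ we get $\ovl{f(b_1,\ldots,b_n)} = \ovl f(\ovl{b_1},\ldots,\ovl{b_n})$ in $Kv$. Because $\ovl f$ is anisotropic over $Kv$ and $(\ovl{b_1},\ldots,\ovl{b_n}) \neq 0$, this residue is non-zero, so $v(f(b_1,\ldots,b_n)) = 0$ and therefore $v(f(a_1,\ldots,a_n)) = 2v(c) = 2\mu$, as desired.

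There is essentially no serious obstacle here; the statement is a routine consequence of homogeneity together with the definition of anisotropy over the residue field. The only point requiring a modicum of care is the reduction to the case where all $a_i$ are non-zero (so that one may normalise by an element of value $\mu$) and the observation that passing to a subform preserves anisotropy of $\ovl f$ — but both are immediate. I would present the argument in the two displayed steps above, keeping the exposition brief since the identity $v(f(a_1,\ldots,a_n)) = v(c^2) + v(f(b_1,\ldots,b_n))$ with the right-hand valuation equal to $0$ carries all the content.
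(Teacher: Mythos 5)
Your argument is correct. The paper itself does not prove this proposition but simply cites an external reference (\cite[Proposition 4.2]{Andromeda-1}); your write-up supplies the standard direct argument that one would expect behind that citation: the lower bound $v(f(a))\geq 2\mu$ from the ultrametric inequality applied to the monomials, and the equality from normalising by an element $c$ of value $\mu$ so that the tuple $(b_1,\ldots,b_n)=(a_1/c,\ldots,a_n/c)$ lies in $\mc{O}_v^n$ with non-zero reduction, whence $\ovl{f}(\ovl{b_1},\ldots,\ovl{b_n})\neq 0$ by anisotropy and $v(f(b))=0$. The only caveat is the degenerate case where all $a_i=0$ (or $n=0$), which your reduction does not quite cover but which holds trivially under the convention $v(0)=\infty$; it would be worth one clause to say so explicitly.
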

\begin{proof}
See \cite[Proposition 4.2]{Andromeda-1}.
\end{proof}
\begin{prop}\label{P:anisotropicResidue}
Let $(K, v)$ be a henselian valued field, $q(X_1, \ldots, X_n) \in \mc{O}_v[X_1, \ldots, X_n]$ a quadratic form such that $\overline{q} \in Kv[X_1, \ldots, X_n]$ is non-singular.
Then $q$ is anisotropic if and only if $\overline{q}$ is anisotropic.
\end{prop}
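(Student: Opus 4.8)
This is a form of Hensel's lemma for quadratic forms, and the plan is to prove the two implications separately: one of them is immediate from \Cref{P:binaryFormValuations}, and the other is where Hensel's lemma and the non-singularity of $\overline{q}$ enter.

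\textbf{The direction ``$\overline{q}$ anisotropic $\Rightarrow$ $q$ anisotropic''.} This follows at once from \Cref{P:binaryFormValuations}: if $\overline{q}$ is anisotropic, then for every $(a_1, \ldots, a_n) \in K^n \setminus \lbrace 0 \rbrace$ one has $v(q(a_1, \ldots, a_n)) = 2\min_i v(a_i) \in vK$, so in particular $q(a_1, \ldots, a_n) \neq 0$.

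\textbf{The direction ``$q$ anisotropic $\Rightarrow$ $\overline{q}$ anisotropic''.} I would prove the contrapositive: assuming $\overline{q}$ isotropic, produce a nontrivial zero of $q$. Pick $\overline{v_0} \in (Kv)^n \setminus \lbrace 0 \rbrace$ with $\overline{q}(\overline{v_0}) = 0$ and a lift $v_0 \in \mc{O}_v^n$ of it; since $\overline{v_0} \neq 0$, some coordinate of $v_0$ is a unit. Because $\overline{q}$ is non-singular and $\overline{v_0} \neq 0$, there is $\overline{w} \in (Kv)^n$ with $\mf{b}_{\overline{q}}(\overline{v_0}, \overline{w}) \neq 0$; fix a lift $w \in \mc{O}_v^n$. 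Then, viewing $q$ and $\mf{b}_q$ as polynomials with coefficients in $\mc{O}_v$, consider the one-variable polynomial
$$ g(T) = q(v_0 + Tw) = q(v_0) + \mf{b}_q(v_0, w)\,T + q(w)\,T^2 \;\in\; \mc{O}_v[T]. $$
Its reduction is $\overline{g}(T) = T\bigl(\overline{\mf{b}_q(v_0, w)} + \overline{q(w)}\,T\bigr)$, using $\overline{q(v_0)} = \overline{q}(\overline{v_0}) = 0$, and the second factor takes the value $\mf{b}_{\overline{q}}(\overline{v_0}, \overline{w}) \neq 0$ at $T = 0$; hence $0$ is a simple root of $\overline{g}$. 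As $(K, v)$ is henselian, Hensel's lemma provides a root $t_0 \in \mc{O}_v$ of $g$ with $\overline{t_0} = 0$. Then $v_0 + t_0 w$ reduces to $\overline{v_0} \neq 0$, so $v_0 + t_0 w \in K^n \setminus \lbrace 0 \rbrace$, while $q(v_0 + t_0 w) = g(t_0) = 0$. Thus $q$ is isotropic.

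I do not expect a genuine obstacle here. The only point requiring mild care is that $\overline{g}$ may be linear rather than quadratic (when $\overline{q(w)} = 0$), but the simple-root form of Hensel's lemma — or, in that case, just solving the linear equation, whose leading coefficient $\overline{\mf{b}_q(v_0, w)}$ is a unit — applies equally well. It is also worth recording in the write-up why the non-singularity hypothesis on $\overline{q}$ cannot be dropped: over a discretely valued henselian field with uniformizer $\pi$, the binary form $X_1^2 + \pi X_2^2$ is anisotropic while its singular reduction $X_1^2$ is isotropic.
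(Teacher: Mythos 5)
Your proof is correct and follows essentially the same route as the paper: the easy direction via \Cref{P:binaryFormValuations}, and the converse by lifting an isotropic vector of $\overline{q}$ along a line $v_0 + Tw$ chosen via non-singularity and applying Hensel's lemma to the resulting quadratic in $T$ with a simple root at $0$. Your variant of the last step (taking the root $t_0$ with $\overline{t_0}=0$ so that $v_0 + t_0 w$ visibly reduces to $\overline{v_0}\neq 0$, and noting the possibly linear degenerate case) is if anything slightly more careful than the paper's.
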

\begin{proof}
The fact that $q$ is anisotropic when $\overline{q}$ is anisotropic follows from \Cref{P:binaryFormValuations}, without assuming that $q$ is non-singular or that $v$ is henselian.

For the other implication, assume that $\overline{q}$ is non-singular and isotropic over $Kv$. Taking $x, y \in \mc{O}_v^n$ with $\overline{x} \neq 0$, $\overline{q(x)} = 0$ and $\mf{b}_{\overline{q}}(\overline{x}, \overline{y}) \neq 0$, we obtain that
$$f(T) = q(x) + T\mf{b}_q(x, y) + T^2q(y) \in \mc{O}_v[T]$$
has a simple root $0$ in $Kv$.
Since $v$ is henselian, $f(T)$ thus has distinct roots $t_1, t_2$ in $K$.
But then either $x + t_1y \neq 0$ or $x + t_2y \neq 0$, and $q(x+t_1y) = q(x+t_2y) = f(t_1) = 0$, whereby $q$ is isotropic.
\end{proof}
\begin{prop}\label{P:Pfister-residue-computation}
Let $(K, v)$ be a henselian valued field, $a_1, \ldots, a_n \in K^\times$, $b \in K$ with $1+4b \neq 0$.
Assume that, for $m \in \nat$, $v(a_1), \ldots, v(a_m)$ are $\ff_2$-linearly independent in $vK/2vK$, and $a_{m+1}, \ldots, a_{n}, b, 1+4b \in \mc{O}_v^\times$.
Then $q = \llangle a_1, \ldots, a_n, b]]_K$ is anisotropic if and only if $\llangle \ovl{a_{m+1}}, \ldots, \ovl{a_n}, \ovl{b}]]_{Kv}$ is anisotropic.
Furthermore, in this case, we have for $\pi \in K^\times$ that $\RF{\pi}{q}$
\begin{itemize}
\item is $0$ if $v(\pi) \not \in \Span_{\ff_2} \lbrace v(a_1), \ldots, v(a_n) \rbrace$,
\item is similar to $\RF{1}{q} = \llangle \ovl{a_{m+1}}, \ldots, \ovl{a_n}, \ovl{b}]]_{Kv}$ if $v(\pi)\in \Span_{\ff_2} \lbrace v(a_1), \ldots, v(a_n) \rbrace$.
\end{itemize}
\end{prop}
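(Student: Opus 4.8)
The plan is to split off the ramified slots and then reduce everything to \Cref{P:binaryFormValuations,P:anisotropicResidue} applied to the ``unit-valued'' Pfister form $q_0 := \llangle a_{m+1}, \ldots, a_n, b]]_K$. First I would exploit multiplicativity of the Pfister construction to write $q \cong \llangle a_1, \ldots, a_m \rrangle^b_K \otimes q_0$ and expand the bilinear factor as an orthogonal sum of rank-one forms $\langle c_S \rangle$, $c_S = \prod_{i \in S}(-a_i)$, indexed by $S \subseteq \lbrace 1, \ldots, m \rbrace$, so that $q \cong \bigperp_{S} c_S q_0$. Here $q_0$ is presented by a homogeneous degree $2$ polynomial $f_0$ over $\mc{O}_v$ whose reduction $\ovl{f_0}$ is the Pfister form $\llangle \ovl{a_{m+1}}, \ldots, \ovl{a_n}, \ovl{b}]]_{Kv}$, which is non-singular because $\ovl{a_{m+1}}, \ldots, \ovl{a_n} \in (Kv)^\times$ and $\ovl{1+4b} \neq 0$. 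The crucial point---and the only place the hypothesis enters---is that, $v(a_1), \ldots, v(a_m)$ being $\ff_2$-linearly independent in $vK/2vK$, the values $v(c_S) = \sum_{i \in S} v(a_i)$ lie in pairwise distinct cosets of $2vK$, whose union is exactly the preimage in $vK$ of $\Span_{\ff_2}\lbrace v(a_1), \ldots, v(a_m) \rbrace = \Span_{\ff_2}\lbrace v(a_1), \ldots, v(a_n) \rbrace$.

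For the anisotropy equivalence: if $\ovl{f_0}$ is isotropic then $q_0$ is isotropic by \Cref{P:anisotropicResidue}, hence so is $q$ (the summand $S = \emptyset$ is isometric to $q_0$). Conversely, if $\ovl{f_0}$ is anisotropic, \Cref{P:binaryFormValuations} gives $v(q_0(x)) = 2\min_i v(x_i) \in 2vK$ for every nonzero vector $x$, so for any nonzero $(x_S)_S$ the nonzero summands of $\sum_S c_S q_0(x_S)$ have valuations in pairwise distinct cosets of $2vK$; no cancellation can occur and $q((x_S)_S) \neq 0$. This proves $q$ is anisotropic iff $\llangle \ovl{a_{m+1}}, \ldots, \ovl{a_n}, \ovl{b}]]_{Kv}$ is.

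Assume henceforth $q$, equivalently $\ovl{f_0}$, is anisotropic. The same absence of cancellation shows that the filtration $(V_\lambda, V^\circ_\lambda)$ of the space of $q$ is the direct sum of those of the summands $c_S q_0$, so $\RF{\pi}{q} \cong \bigperp_S \RF{\pi}{c_S q_0}$ for each $\pi \in K^\times$. As $v(c_S q_0(x)) \in v(c_S) + 2vK$ for all $x$, the summand $\RF{\pi}{c_S q_0}$ is zero-dimensional unless $v(\pi) \in v(c_S) + 2vK$; hence if $v(\pi) \notin \Span_{\ff_2}\lbrace v(a_1), \ldots, v(a_n)\rbrace$ all summands vanish and $\RF{\pi}{q} = 0$. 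Otherwise there is a unique $S_0$ with $v(\pi) = v(c_{S_0}) + 2\mu$, $\mu \in vK$; taking $c \in K^\times$ with $v(c) = \mu$ and using $v(q_0(x)) = 2\min_i v(x_i)$ again, one identifies the space computing $\RF{\pi}{c_{S_0} q_0}$ with $(Kv)^{\dim(q_0)}$ via $x = cy \mapsto \ovl{y}$, under which $\RF{\pi}{c_{S_0}q_0}(\ovl{y}) = \ovl{\pi^{-1} c_{S_0} c^2}\, \ovl{f_0}(\ovl{y})$ with $\ovl{\pi^{-1} c_{S_0} c^2} \in (Kv)^\times$. So $\RF{\pi}{q} = \RF{\pi}{c_{S_0} q_0}$ is similar to $\ovl{f_0} = \llangle \ovl{a_{m+1}}, \ldots, \ovl{a_n}, \ovl{b}]]_{Kv}$; specialising $\pi = 1$ forces $S_0 = \emptyset$, $c_{S_0} = 1$, giving $\RF{1}{q} = \llangle \ovl{a_{m+1}}, \ldots, \ovl{a_n}, \ovl{b}]]_{Kv}$ exactly.

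The multiplicativity bookkeeping of the first step and the identification of quotient spaces in the last step are routine; the one load-bearing idea is that $\ff_2$-linear independence of $v(a_1), \ldots, v(a_m)$ forces the $2^m$ blocks $c_S q_0$ to occupy distinct cosets of $2vK$, which is precisely what rules out cancellation and diagonalises the residue computation. I expect the most delicate point to be phrasing the compatibility of the $(V_\lambda, V^\circ_\lambda)$-filtration with the orthogonal decomposition $q \cong \bigperp_S c_S q_0$ cleanly enough to extract $\RF{\pi}{q} \cong \bigperp_S \RF{\pi}{c_S q_0}$.
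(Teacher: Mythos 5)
Your proposal is correct and follows essentially the same route as the paper: the same expansion of $q$ into the $2^m$ blocks $\bigl((-1)^{\lvert S\rvert}\prod_{i\in S}a_i\bigr)\llangle a_{m+1},\ldots,a_n,b]]_K$, the same use of \Cref{P:anisotropicResidue} and \Cref{P:binaryFormValuations}, and the same load-bearing observation that $\ff_2$-linear independence of $v(a_1),\ldots,v(a_m)$ places the blocks in pairwise distinct cosets of $2vK$, ruling out cancellation and diagonalising the residue computation. The only difference is that you spell out the compatibility of the filtration $(V_\lambda, V_\lambda^\circ)$ with the orthogonal decomposition, which the paper leaves implicit.
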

\begin{proof}
We may write
$$ q \cong \bigperp_{I \subseteq \lbrace 1, \ldots, m \rbrace} \left((-1)^{\lvert I \rvert}\prod_{i \in I}a_i\right)\llangle a_{m+1}, \ldots, a_n, b ]]_K. $$
Since by assumption $a_{m+1}, \ldots, a_n, b, 1+4b \in \mc{O}_v^\times$, their residues in $Kv$ are all non-zero, hence $\llangle \ovl{a_{m+1}}, \ldots, \ovl{a_n}, \ovl{b}]]_{Kv}$ is a Pfister form, in particular non-singular.
By \Cref{P:anisotropicResidue}, if $q$ is anisotropic, then so is $\llangle \ovl{a_{m+1}}, \ldots, \ovl{a_n}, \ovl{b}]]_{Kv}$.
Furthermore, by the assumption on $v(a_1), \ldots, v(a_m)$, note that for $I, I' \subseteq \lbrace 1, \ldots, m \rbrace$ with $I \neq I'$ we have $v(\prod_{i \in I} a_i) \not\equiv v(\prod_{i \in I'} a_i) \bmod 2vK$.
Combining this with \Cref{P:binaryFormValuations} applied to the form $\llangle a_{m+1}, \ldots, a_n, b]]_K$, we conclude that $q$ is anisotropic if $\llangle \ovl{a_{m+1}}, \ldots, \ovl{a_n}, \ovl{b}]]_{Kv}$ is anisotropic, and in this case we see that for $\pi \in K^\times$ we have $\RF{\pi}{q} = 0$ if $v(\pi) \not\equiv v(\prod_{i \in I} a_i) \bmod 2vK$ for all $I \subseteq \lbrace 1, \ldots, m \rbrace$, and that $\RF{\pi}{q} \cong \overline{(\pi^{-1}(-1)^{\lvert I \rvert}(\prod_{i \in I} a_i)t^2)}\llangle \ovl{a_{m+1}}, \ldots, \ovl{a_n}, \ovl{b}]]_K$ if $v(\pi) = v(\prod_{i \in I} a_i) + 2v(t)$ for some $I \subseteq \lbrace 1, \ldots, m \rbrace$ and $t \in K^\times$.
From this the claimed statement follows.
\end{proof}

We can extend the notion of residue forms to be defined for arbitrary non-singular quadratic forms over valued fields $(K, v)$ as follows.
Firstly, for a non-singular (possibly isotropic) quadratic form $q$ over a henselian valued field $(K, v)$, we can find quadratic forms $q_{\text{an}}$ and $q_{\text{hyp}}$ over $K$ which are respectively anisotropic and hyperbolic, and such that $q \cong q_{\text{an}} \perp q_{\text{hyp}}$, and furthermore, the form $q_{\text{an}}$ is determined uniquely by $q$ up to isometry (special case of Witt's Decomposition Theorem, see \cite[Theorem 8.5]{ElmanKarpenkoMerkurjev}).
For $a \in K^\times$, we define the residue form $\RF{a}{q} = \RF{a}{q_{\text{an}}}$; this is uniquely defined up to isometry.
Finally, given a non-singular quadratic form $q$ over a valued field $(K, v)$ which is not necessarily henselian and $a \in K^\times$, we define $\RF{a}{q}$ to be $\RF{a}{q_{K_v}}$, i.e. the residue form of $q_{K_v}$ when considered as a form over the henselisation $K_v$.

For a valued field $(K, v)$ and $n \in \nat^+$, let us denote by $(\IS{v}K)^n$ the subgroup of $I_q^n K$ generated by scaled $n$-fold Pfister forms in $\IS{v}K$.
Note that $(\IS{v}K)^{n+1} \subseteq (\IS{v}K)^n \subseteq \IS{v}K \cap I^n_q K$.

\begin{stel}\label{T:Pfister-residue-main}
Let $(K, v)$ be a valued field, $\pi \in K^\times$, $n \in \nat^+$, $m \in \nat$, and assume that $[vK:2vK] = 2^m$.
We have that $\RF{\pi}{(\IS{v}K)^{n+m}} = I^n_q Kv$.
If $q$ is an $(n+m)$-fold Pfister form over $K$ with $[q] \in \IS{v}K$, then $\RF{1}{q}$ is an $n'$-fold Pfister form for some $n \leq n' \leq n+m$.

Furthermore, if we assume that $v$ is henselian and $I^{n+1}_q Kv = 0$, then $\RF{\pi}{q} \cong \RF{1}{q}$ for all anisotropic quadratic forms $q$ with $[q] \in (\IS{v}K)^{n+m}$.
In particular, in this case, the map
$$ (\IS{v}K)^{n+m} \to I^n_q Kv : [q] \mapsto [\RF{1}{q}], $$
is an isomorphism, restricting to a bijection between the classes of $(n+m)$-fold Pfister forms in $\IS{v}K$, and the classes of $n$-fold Pfister forms over $Kv$.
\end{stel}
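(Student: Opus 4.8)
\emph{Strategy and reduction to the henselian case.}
The plan is to reduce to $v$ henselian and then run everything through the residue form isomorphism $\partial_v$ of Propositions~\ref{P:residueChar2} and~\ref{P:residueCharNot2}, computing residue forms of Pfister forms by Proposition~\ref{P:Pfister-residue-computation}. For the reduction, recall that $\RF{\pi}{q}$ is by definition computed over the henselisation $K_v$ (which has value group $vK$ and residue field $Kv$), that $\IS{v}K$ is the preimage of $\IS{v}K_v$ under the restriction map $r\colon I_qK\to I_qK_v$, and that $r$ preserves $(n+m)$-fold Pfister forms. Using Proposition~\ref{P:Pfister-when-good-slot} to present every $(n+m)$-fold Pfister form over $K_v$ of class in $\IS{v}K_v$ with a last slot that is a unit, and then approximating all of its slots by elements of the dense subfield $K$ closely enough to preserve the isometry class, one obtains $r((\IS{v}K)^{n+m})=(\IS{v}K_v)^{n+m}$. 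Hence the first two assertions follow from their henselian cases, and we assume from now on that $v$ is henselian.

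\emph{Normal form, and the assertions about $\RF{1}{q}$.}
I claim that every $(n+m)$-fold Pfister form $q$ with $[q]\in\IS{v}K$ is isometric to $\llangle a_1,\ldots,a_k,u_{k+1},\ldots,u_{n+m-1},b]]_K$ for some $a_i,u_j,b$ with $v(a_1),\ldots,v(a_k)$ linearly independent in $vK/2vK$ (so $k\le m$) and $u_{k+1},\ldots,u_{n+m-1},b,1+4b\in\mc{O}_v^{\times}$. Indeed, Proposition~\ref{P:Pfister-when-good-slot} lets us take $b$ and $1+4b$ to be units; writing the bilinear part as $\llangle c_1,\ldots,c_{n+m-1}\rrangle^b_K$ and recording the class of each $v(c_i)$ in $vK/2vK$ as a row of an $\ff_2$-matrix, the elementary bilinear Pfister identities $\llangle a,b\rrangle^b\cong\llangle a,-ab\rrangle^b$ (which adds one slot's class to another's, since $v(-ab)=v(a)+v(b)$) and $\llangle as^2\rrangle^b\cong\llangle a\rrangle^b$ (which replaces a slot of valuation in $2vK$ by a unit) realise a Gauss elimination of this matrix, leaving $k$ rows with pivots in distinct columns and $n+m-1-k$ zero rows. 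Now Proposition~\ref{P:Pfister-residue-computation} applies: for $q$ anisotropic it gives $\RF{1}{q}\cong\llangle\ovl{u_{k+1}},\ldots,\ovl{u_{n+m-1}},\ovl{b}]]_{Kv}$, an $(n+m-k)$-fold Pfister form with $n\le n+m-k\le n+m$, which is the second assertion; and it gives that $\RF{\pi}{q}$ is $0$ or similar to $\RF{1}{q}$, so always has class in $I^{n+m-k}_qKv\subseteq I^n_qKv$. Since $[q]\mapsto[\RF{\pi}{q}]$ is a component of the homomorphism $\partial_v$ and such $q$ generate $(\IS{v}K)^{n+m}$, this yields $\RF{\pi}{(\IS{v}K)^{n+m}}\subseteq I^n_qKv$.

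\emph{Surjectivity.}
As the classes of $n$-fold Pfister forms over $Kv$ generate $I^n_qKv$, it suffices to realise the class of each such $\rho$ as $[\RF{\pi}{q}]$. Choose $d_1,\ldots,d_{n-1}\in\mc{O}_v^{\times}$ lifting the first $n-1$ slots of $\rho$, some $b\in\mc{O}_v$ lifting its last slot with $1+4b\in\mc{O}_v^{\times}$, and $a_1,\ldots,a_m\in K^{\times}$ whose valuations form an $\ff_2$-basis of $vK/2vK$; when $v(\pi)\notin 2vK$, arrange in addition that $a_1=-\pi$. Then $q=\llangle a_1,\ldots,a_m,d_1,\ldots,d_{n-1},b]]_K$ is an $(n+m)$-fold Pfister form with $[q]\in\IS{v}K$ by Proposition~\ref{P:Pfister-when-good-slot}, hence $[q]\in(\IS{v}K)^{n+m}$, and inspecting which values $q$ represents with valuation $v(\pi)$ — those of the summand $(-a_1)\llangle d_1,\ldots,d_{n-1},b]]=\pi\llangle d_1,\ldots,d_{n-1},b]]$ — Proposition~\ref{P:Pfister-residue-computation} gives $\RF{\pi}{q}\cong\rho$. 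When $v(\pi)\in 2vK$, the definition of residue forms directly gives $\RF{\pi}{q'}\cong u\cdot\RF{1}{q'}$ for a fixed unit $u$ depending only on $\pi$ and all $q'$, so that the image of $[q']\mapsto[\RF{\pi}{q'}]$ on $(\IS{v}K)^{n+m}$ equals $u\cdot\RF{1}{(\IS{v}K)^{n+m}}$; and $\RF{1}{(\IS{v}K)^{n+m}}=I^n_qKv$ is the case $v(a_1)=0$ of the construction above. This completes the first assertion.

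\emph{The case $I^{n+1}_qKv=0$, and the obstacle.}
Let $q$ be anisotropic with $[q]\in(\IS{v}K)^{n+m}$; write $[q]=\sum_j\varepsilon_j[q_j]$ with $\varepsilon_j\in\{\pm1\}$ and each $q_j$ an anisotropic $(n+m)$-fold Pfister form in $\IS{v}K$, and put each $q_j$ in the normal form above. As $\RF{1}{q_j}$ is an anisotropic $(n+m-k_j)$-fold Pfister form over $Kv$ while $I^{n+1}_qKv=0$ admits no anisotropic $(n+1)$-fold Pfister form, we must have $k_j=m$, so all $m$ ramified slots of $q_j$ have independent valuations and Proposition~\ref{P:Pfister-residue-computation} makes every $\RF{\pi'}{q_j}$ similar to the $n$-fold Pfister form $\psi_j:=\RF{1}{q_j}$; but for $c\in(Kv)^{\times}$ one has $[c\psi_j]=[\psi_j]-[\llangle c\rrangle^b_{Kv}\otimes\psi_j]$ with $\llangle c\rrangle^b_{Kv}\otimes\psi_j$ an $(n+1)$-fold Pfister form, so $[c\psi_j]=[\psi_j]$. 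Hence $[\RF{\pi'}{q}]=\sum_j\varepsilon_j[\psi_j]$ is independent of $\pi'$, and since residue forms are anisotropic, $\RF{\pi}{q}\cong\RF{1}{q}$ — the third assertion. The map $[q]\mapsto[\RF{1}{q}]$ on $(\IS{v}K)^{n+m}$ is the $1$-component of $\partial_v$ restricted to that subgroup; it is onto $I^n_qKv$ by the first assertion and is injective since $[\RF{1}{q}]=0$ forces all $[\RF{\pi'}{q}]=0$ and hence $[q]=0$ by injectivity of $\partial_v$. Finally, under $I^{n+1}_qKv=0$ the second assertion forces $\RF{1}{q}$ to be an $n$-fold Pfister form for any anisotropic $(n+m)$-fold Pfister form $q$ in $\IS{v}K$, and together with surjectivity at $\pi=1$ and the isometry of two Pfister forms sharing dimension and Witt class this gives the stated bijection. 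The step I expect to cost the most effort is the normal form — the Gauss elimination on the $vK/2vK$-classes of the slots of a Pfister form in $\IS{v}K$ — and, to a lesser extent, controlling the scalar by which $\RF{\pi}{q}$ depends on the auxiliary choices, handled above by the choice $a_1=-\pi$ and by the vanishing of $(n+1)$-fold Pfister forms over $Kv$.
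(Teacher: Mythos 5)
Your proof is correct and follows essentially the same route as the paper's: the same normal form for Pfister forms in $\IS{v}K$ obtained via bilinear Pfister identities applied to the valuation classes of the slots (the paper isolates this step as \Cref{L:left-slot-finding}), the same use of \Cref{P:Pfister-residue-computation} to compute the residue forms, the same explicit lift for surjectivity, and the same ``similar Pfister forms over $Kv$ are isometric when $I^{n+1}_q Kv=0$'' argument for the final assertions. The only differences are cosmetic: the paper skips your approximation step by simply working with $q_{K_v}$ throughout (your surjectivity construction already produces a lift defined over $K$, so that step is not actually needed), while your handling of a general $\pi$ in the surjectivity argument, via the choice $a_1=-\pi$ and the scaling by a fixed unit when $v(\pi)\in 2vK$, is slightly more explicit than the paper's, which writes out only the case $\pi=1$ and leaves the rest to the similarity observation.
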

\begin{proof}
Let $q$ be an $(n+m)$-fold Pfister form with $[q] \in \IS{v}(K)$.
If $q_{K_v}$ is isotropic, then $\RF{\pi}{q} = 0$ for all $\pi \in K^\times$, so assume now that $q_{K_v}$ is anisotropic.
By \Cref{P:Pfister-when-good-slot} we may write $q_{K_v} \cong \llangle a_1, \ldots, a_{n+m-1}, b]]_K$ for some $a_1, \ldots, a_{n+m-1}, b \in K_v^\times$ with $b, 1+4b \in \mc{O}_v^\times$.
In view of \Cref{L:left-slot-finding} we may replace the $a_i$'s and assume additionally that, for some $m' \leq m$, $v(a_{m'+1}) = \ldots = v(a_{n+m-1}) = 0$ and $v(a_1), \ldots, v(a_{m'})$ are $\ff_2$-linearly independent in $vK_v/2vK_v = vK/2vK$.
Setting $n' = n + m - m'$, we obtain by \Cref{P:Pfister-residue-computation} that, for every $\pi \in K^\times$, $\RF{\pi}{q} = \RF{\pi}{q_{K_v}}$ is either $0$ or similar to the $n'$-fold Pfister form $\RF{1}{q_{K_v}}$.
In particular, $[\RF{\pi}{q}] \in I_q^n Kv$.
Furthermore, since $\pi \in K^\times$ was arbitrary, we also have for $a \in K^\times$ that $[\RF{\pi}{aq}] = [\RF{\pi a^{-1}}{q}] \in I_q^n Kv$.
As $(\IS{v}K)^{n+m}$ is generated by scaled $(n+m)$-fold Pfister forms, we conclude that $\RF{\pi}{(\IS{v}K)^{n+m}} \subseteq I_q^n Kv$.

For the other inclusion, since the residue maps from Propositions \ref{P:residueCharNot2} and \ref{P:residueChar2} are group homomorphisms which are compatible with scaling by units, it suffices to show that the class of every $n$-fold Pfister form over $Kv$ can be obtained as the residue of an anisotropic $(n+m)$-fold Pfister form in $\IS{v}K$.
To this end, consider an $n$-fold Pfister form $q'$ over $Kv$, and write it as $q' = \llangle \ovl{a_{m+1}}, \ldots, \ovl{a_{n+m-1}}, \ovl{b} ]]_{Kv}$ for some $a_{m+1}, \ldots, a_{n+m-1}, b, 1+4b \in \mc{O}_v^\times$.
Let $a_1, \ldots, a_m$ be representatives of the different classes of $K^\times/v^{-1}(2vK)$.
By setting $q = \llangle a_1, \ldots, a_m, a_{m+1}, \ldots, a_{n+m-1}, b]]_K$, we obtain by \Cref{P:Pfister-residue-computation} that $\RF{1}{q} \cong q'$ as desired.

Now assume that $v$ is henselian and $I_q^{n+1} Kv = 0$.
Then by the above, for any anisotropic $(n+m)$-fold Pfister form $q$ with $[q] \in (\IS{v}K)^{n+m}$, all residue forms of $q$ must be pairwise similar $n$-fold Pfister forms, and in fact similar $n$-fold Pfister forms over $Kv$ must be isometric, so all residue forms of $q$ are isometric.
Since $(\IS{v}K)^{n+m}$ is generated by classes of $(n+m)$-fold Pfister forms, the rest of the statement follows readily by \Cref{P:residueCharNot2} and \Cref{P:residueChar2}.
\end{proof}
\begin{gev}\label{C:Pfister-residue-main}
Let $(K, v)$ be a valued field, $n \in \nat^+$, $m \in \nat$, and assume that $[vK : 2vK] = 2^m$.
There is a surjective group homomorphism
$$ \partial_v : (\IS{v}K)^{n+m}/(\IS{v}K)^{n+m+1} \to I_q^n Kv / I_q^{n+1} Kv : [q] \mapsto [\RF{1}{q}], $$
and it maps the class of an $(n+m)$-fold Pfister form $q$ over $K$ to the class of an $n$-fold Pfister form over $Kv$.
Furthermore, if $v$ is henselian and $I_q^{n+1} Kv = 0$, then $\partial_v$ is an isomorphism.
\end{gev}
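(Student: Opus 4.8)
The plan is to realise $\partial_v$ as a quotient of the residue homomorphism on $(\IS{v}K)^{n+m}$ already produced by \Cref{T:Pfister-residue-main}, by playing off the conclusions of that theorem at level $n$ against its conclusions at level $n+1$. Concretely, I would fix $1 \in \Pi$ as the representative of the trivial coset in $K^{\times}/v^{-1}(2vK)$. By \Cref{P:residueCharNot2} and \Cref{P:residueChar2} — applied to the henselisation $K_v$ when $v$ is not already henselian, using that $\IS{v}K$ is the preimage of $\IS{v}K_v$ under restriction — the assignment $[q] \mapsto [\RF{1}{q}]$ is a well-defined group homomorphism on $\IS{v}K$; restricting it to $(\IS{v}K)^{n+m}$ and invoking the first assertion of \Cref{T:Pfister-residue-main} (which identifies the image as exactly $I^n_q Kv$, even though the ambient residue isomorphism need not respect even-dimensionality) yields a surjective homomorphism $\varphi \colon (\IS{v}K)^{n+m} \to I^n_q Kv$.

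The heart of the matter is then to see that composing $\varphi$ with the projection $\rho \colon I^n_q Kv \to I^n_q Kv / I^{n+1}_q Kv$ annihilates $(\IS{v}K)^{n+m+1}$, so that it descends to the desired map on $(\IS{v}K)^{n+m}/(\IS{v}K)^{n+m+1}$ (the containment $(\IS{v}K)^{n+m+1} \subseteq (\IS{v}K)^{n+m}$ needed here was recorded after the definition of $(\IS{v}K)^{n}$). This is immediate from \Cref{T:Pfister-residue-main} applied with $n$ replaced by $n+1$: since $(n+1)+m = n+m+1$, that application gives $\RF{1}{(\IS{v}K)^{n+m+1}} = I^{n+1}_q Kv$, whence $\varphi\big((\IS{v}K)^{n+m+1}\big) \subseteq \ker \rho$. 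Surjectivity of the resulting $\partial_v$ is inherited from that of $\rho \circ \varphi$. For the Pfister-form assertion, the second part of \Cref{T:Pfister-residue-main} says $\RF{1}{q}$ is an $n'$-fold Pfister form with $n \leq n' \leq n+m$ whenever $q$ is an $(n+m)$-fold Pfister form with $[q] \in \IS{v}K$; if $n' = n$ then $\partial_v([q])$ is the class of an $n$-fold Pfister form, and if $n' > n$ then $[\RF{1}{q}] \in I^{n'}_q Kv \subseteq I^{n+1}_q Kv$, so $\partial_v([q]) = 0$, which is the class of the split $n$-fold Pfister form.

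For the final clause, assume $v$ henselian and $I^{n+1}_q Kv = 0$. Then $\rho$ is the identity and $I^n_q Kv/I^{n+1}_q Kv = I^n_q Kv$; since also $I^{n+2}_q Kv = 0$, the ``furthermore'' part of \Cref{T:Pfister-residue-main} at level $n+1$ applies and shows that $(\IS{v}K)^{n+m+1} \to I^{n+1}_q Kv = 0$ is an isomorphism, i.e.\ $(\IS{v}K)^{n+m+1} = 0$; hence $\partial_v = \varphi$, which is an isomorphism by the last sentence of \Cref{T:Pfister-residue-main}. I do not anticipate a genuine obstacle: the argument is essentially bookkeeping layered over \Cref{T:Pfister-residue-main}, the only points requiring a moment's care being the additivity of $[q] \mapsto [\RF{1}{q}]$ (read off from Propositions \ref{P:residueCharNot2}--\ref{P:residueChar2}) and reading ``class of an $n$-fold Pfister form'' as allowing the split form.
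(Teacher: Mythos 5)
Your proposal is correct and follows essentially the same route as the paper: realise $\partial_v$ as the quotient of the surjective residue homomorphism $(\IS{v}K)^{n+m} \to I_q^n Kv$ from \Cref{T:Pfister-residue-main}, note that $(\IS{v}K)^{n+m+1}$ is carried onto $I_q^{n+1}Kv$ (the paper states this directly; you justify it by re-applying the theorem at level $n+1$, which is the intended reading), and in the henselian case with $I_q^{n+1}Kv=0$ deduce $(\IS{v}K)^{n+m+1}=0$ so that $\partial_v$ is the isomorphism of the theorem. Your extra care about the residue of an $(n+m)$-fold Pfister form possibly being an $n'$-fold Pfister form with $n'>n$ (whose class in the quotient is then that of a hyperbolic $n$-fold Pfister form) is a legitimate and correctly resolved subtlety that the paper's proof leaves implicit.
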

\begin{proof}
By \Cref{T:Pfister-residue-main} we have a surjective group morphism $(\IS{v}K)^{n+m} \to I_q^n Kv$ mapping $(\IS{v}K)^{n+m+1}$ onto $I_q^{n+1} Kv$, hence by taking quotients one obtains the desired surjective group homomorphism $\partial_v$.
It also follows from \Cref{T:Pfister-residue-main} that if $v$ is henselian and $I_q^{n+1}Kv = 0$, then $(\IS{v}K)^{n+m+1} = 0$ and $\partial_v$ is an isomorphism.
\end{proof}

\section{Linkage over henselian valued fields}\label{sect:linkage-local}

\begin{prop}\label{P:linked-henselian}
Let $(K, v)$ be a valued field, $d \in \nat^+$. Assume that $[vK : 2vK] = 2^m$ for some $m \in \nat$. We have that
\begin{displaymath}
K \enspace \text{top-}(d+m)\text{-linked} \qquad \Rightarrow \qquad Kv \enspace \text{top-}d\text{-linked.}
\end{displaymath}
If $v$ is non-dyadic henselian, then the converse implication also holds.
\end{prop}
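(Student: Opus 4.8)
The plan is to read off both implications from \Cref{T:Pfister-residue-main} and \Cref{C:Pfister-residue-main}, which identify the $(d+m)$-fold Pfister forms in $\IS{v}K$ with the $d$-fold Pfister forms over $Kv$ via residue forms. Throughout one uses that $(\IS{v}K)^{n}$ is by definition a subgroup of $I^{n}_q K$, and that residue forms send the zero class to the zero class.

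For the forward implication, assume $K$ is top-$(d+m)$-linked, that is, $I^{d+m}_q K$ is linked and $I^{d+m+1}_q K = 0$. Then $(\IS{v}K)^{d+m+1} \subseteq I^{d+m+1}_q K = 0$, so \Cref{T:Pfister-residue-main} with $n = d+1$ yields $I^{d+1}_q Kv = \RF{1}{(\IS{v}K)^{d+1+m}} = 0$; this is one of the two conditions for $Kv$ to be top-$d$-linked. For the remaining condition, apply \Cref{C:Pfister-residue-main} with $n = d$: since $(\IS{v}K)^{d+m+1} = 0$ and $I^{d+1}_q Kv = 0$, the map $\partial_v$ is a surjection $(\IS{v}K)^{d+m} \to I^d_q Kv$. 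Given $y \in I^d_q Kv$, pick $x \in (\IS{v}K)^{d+m}$ with $\partial_v(x) = y$. Since $x \in I^{d+m}_q K$ and the latter is linked with $I^{d+m+1}_q K = 0$, we may write $x = [q]$ for a $(d+m)$-fold Pfister form $q$ over $K$; moreover $[q] = x \in (\IS{v}K)^{d+m} \subseteq \IS{v}K$, so \Cref{C:Pfister-residue-main} shows that $y = \partial_v([q])$ is the class of a $d$-fold Pfister form over $Kv$. Hence every element of $I^d_q Kv = I^d_q Kv/I^{d+1}_q Kv$ is represented by a $d$-fold Pfister form, i.e.\ $I^d_q Kv$ is linked and $Kv$ is top-$d$-linked.

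For the converse, assume in addition that $v$ is non-dyadic and henselian, so $\IS{v}K = I_q K$ and thus $(\IS{v}K)^{n} = I^{n}_q K$ for every $n \in \nat^+$. Suppose $Kv$ is top-$d$-linked, so $I^{d+1}_q Kv = 0$ and $I^d_q Kv$ is linked. By \Cref{C:Pfister-residue-main} applied with $n = d$ (henselian case), $\partial_v : I^{d+m}_q K/I^{d+m+1}_q K \to I^d_q Kv$ is an isomorphism, and the proof of that corollary additionally gives $I^{d+m+1}_q K = (\IS{v}K)^{d+m+1} = 0$; so only linkage of $I^{d+m}_q K$ remains. For $x \in I^{d+m}_q K$, the class $\partial_v(x) \in I^d_q Kv$ is, by linkage over $Kv$, represented by some $d$-fold Pfister form $q'$; the bijection in \Cref{T:Pfister-residue-main} then produces a $(d+m)$-fold Pfister form $q$ over $K$ (automatically in $\IS{v}K$) with $\partial_v([q]) = [q'] = \partial_v(x)$, and injectivity of $\partial_v$ forces $x = [q]$. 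Therefore $I^{d+m}_q K$ is linked and $K$ is top-$(d+m)$-linked.

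All the substantive work is already contained in \Cref{T:Pfister-residue-main} and \Cref{C:Pfister-residue-main}; the only thing to watch is the bookkeeping of ambient groups, in particular that the Pfister forms produced by linkage of $I^{d+m}_q K$ really do lie in $\IS{v}K$, which holds because they represent classes in $(\IS{v}K)^{d+m}$. I expect the genuinely load-bearing hypothesis to be that $v$ is non-dyadic in the converse: once $\IS{v}K$ can be a proper subgroup of $I_q K$, controlling $(\IS{v}K)^{d+m}$ and $(\IS{v}K)^{d+m+1}$ no longer controls all of $I^{d+m}_q K$ nor forces $I^{d+m+1}_q K = 0$, and the converse indeed fails in general in the dyadic case.
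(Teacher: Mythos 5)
Your proof is correct and follows essentially the same route as the paper: both implications are read off from Theorem~\ref{T:Pfister-residue-main} and Corollary~\ref{C:Pfister-residue-main}, using that the residue map carries $(d+m)$-fold Pfister forms in $\IS{v}K$ to $d$-fold Pfister forms over $Kv$ (surjectively in general, bijectively in the non-dyadic henselian case where $\IS{v}K = I_qK$), together with $(\IS{v}K)^{d+m+1} \subseteq I_q^{d+m+1}K$ to transfer the vanishing condition. Your closing observation about why non-dyadicity is load-bearing for the converse matches the paper's treatment as well.
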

\begin{proof}
Assume first that $K$ is top-$(d+m)$-linked.
By \Cref{T:Pfister-residue-main} we have surjective group homomorphisms
\begin{eqnarray*}
(\IS{v}K)^{d+m} \to I_q^dKv \quad\text{and}\quad
(\IS{v}K)^{d+m+1} \to I_q^{d+1}K,
\end{eqnarray*}
mapping the class of a non-singular quadratic form $q$ to that of its first residue form $\RF{1}{q}$, and which in particular map a $(d+m)$-fold Pfister form to a $d$-fold Pfister form.
By the assumption that $K$ is $(d+m)$-linked, we have in particular $(\IS{v}K)^{d+m} = \IS{v}K \cap I_q^{d+m}K$.
Furthermore, since by assumption $0 = I_q^{d+m+1}K \supseteq (\IS{v}K)^{d+m+1}$, we also have $I_q^{d+1}K = 0$.
From this, the top-$d$-linkage of $Kv$ follows.

Now assume that $v$ is non-dyadic henselian and $Kv$ is top-$d$-linked.
Then $\IS{v}K = I_qK$ and the first homomorphism above reduces to a homomorphism $I_q^{d+m}K \to I_q^dK$ which, by \Cref{T:Pfister-residue-main}, is an isomorphism.
This shows that $K$ is top-$(d+m)$-linked if and only if $Kv$ is top-$d$-linked.
\end{proof}
\begin{gev}\label{C:strongly-linked-implies-linked}
Let $d \in \nat^+$, let $K$ be a field.
If $K$ is an $\Li{d}$-field, then any algebraic field extension of $K$ is top-$d$-linked.
\end{gev}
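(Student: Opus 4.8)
The plan is to deduce this from the transfer principle \Cref{P:linked-henselian}, applied to a rational function field carrying a $T$-adic valuation. So let $L/K$ be an algebraic field extension and consider the rational function field $L(T)$. Since $L/K$ is algebraic, $L(T)/K$ is a field extension of transcendence degree one, though possibly not finitely generated when $L/K$ is infinite. Using that $K$ is an $\Li{d}$-field together with the observation, recorded just after the definition of $\Li{d}$-fields, that it suffices to test transcendence degree one extensions that need not be finitely generated, one obtains that $L(T)$ is top-$(d+1)$-linked. (If one prefers to avoid this reduction, one can instead write $L$ as a directed union of finite subextensions $L_i/K$, so that each $L_i(T)$ is a genuine function field in one variable over $K$, hence top-$(d+1)$-linked.)

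Next I would equip $L(T)$ with the $T$-adic valuation $v$. Its value group is $\zz$, so $[v L(T) : 2 v L(T)] = 2 = 2^1$, and its residue field is $L$. \Cref{P:linked-henselian} in the case $m = 1$ states precisely that, over an arbitrary valued field, top-$(d+1)$-linkedness of the field implies top-$d$-linkedness of the residue field; applied to $(L(T), v)$ this gives that $L$ is top-$d$-linked, which is the claim. In the directed-union variant one concludes that each $L_i$ is top-$d$-linked, and then that $L = \bigcup_i L_i$ is top-$d$-linked, using once more that top-$d$-linkedness is a finitary property.

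I do not anticipate a genuine obstacle here: the whole argument is packaged into \Cref{P:linked-henselian} and the finitary nature of the linkage properties involved. The single point worth double-checking is that the forward implication of \Cref{P:linked-henselian} is truly available without any henselianity hypothesis — which it is, since its proof uses only the surjectivity part of \Cref{T:Pfister-residue-main}, valid over any valued field — so that we are entitled to invoke it for the non-henselian valued field $(L(T), v)$.
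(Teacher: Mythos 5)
Your proposal is correct and follows essentially the same route as the paper: reduce to the $T$-adic valuation on $L(T)$ (with residue field $L$ and $[vL(T):2vL(T)]=2$), note that $L(T)$ is top-$(d+1)$-linked because $K$ is an $\Li{d}$-field, and apply the forward implication of \Cref{P:linked-henselian}, which indeed holds for arbitrary valued fields without henselianity. Your directed-union reduction to finite subextensions is exactly the paper's opening step, so nothing further is needed.
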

\begin{proof}
It suffices to show that any finite field extension of $K$ is top-$d$-linked.
An arbitrary finite field extension $L$ of $K$ is the residue field of a discrete valuation on $L(T)$, whereby the claim follows from \Cref{P:linked-henselian}.
\end{proof}

The following example was pointed out to me by Karim Johannes Becher.
\begin{vb}\label{vb-Li_n-vs-cd2}
As explained in \cite[2]{Becher-Biquaternion}, building on results of \cite{CTM-DelPezzo}, there exists a field $K$ of characteristic $0$ of $2$-cohomological dimension $1$ (i.e.~such that $I^{3}_q F =0$ for every function field in one variable $F/K$, see \Cref{P:cdFacts}) and such that there exists a biquaternion division algebra over $F = K(T)$, i.e.~a tensor product of two quaternion algebras which is not Brauer equivalent to a quaternion algebra.
Denoting by $\Br(F)[2]$ the $2$-torsion part of the Brauer group of $F$, there is a natural isomorphism $\Br(F)[2] \to I^2_qF$ mapping the class of a quaternion algebra to the class of its norm form \cite[Theorem 16.3]{ElmanKarpenkoMerkurjev}, and we conclude that $I^2_qF$ is not linked, so $K$ is not an $\Li 1$-field.
Setting $K_n = K(T_1, \ldots, T_{n-1})$, we thus obtain an example of a field $K_n$ satisfying $I^{n+2}_q F = 0$ for all function fields in one variable $F/K$ (i.e.~of $2$-cohomological dimension $n$), but $K_n$ is not an $\Li n$-field: we see by induction on $n$ using \Cref{P:linked-henselian} that $K_{n+1} = K_n(T_n)$ is not top-$(n+1)$-linked.
\end{vb}

\begin{lem}\label{L:Z-valued-maxdim-MMW}
Let $(K, v)$ be a dyadic henselian discretely valued field, $d \in \nat^+$.
Assume that $[Kv : Kv\pow{2}] \leq 2^{d-1}$.
Then $I_q^{d+1} K \subseteq \IS{v} K$.
\end{lem}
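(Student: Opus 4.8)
The goal is to show that every $(d+1)$-fold Pfister form class in $I_q^{d+1}K$ lies in $\IS{v}K$, i.e.\ has all residue forms non-singular (equivalently, becomes hyperbolic over $K^{un}$, by \Cref{P:TignolCharacteriseTameQuadratic}). Since $I_q^{d+1}K$ is generated by classes of $(d+1)$-fold Pfister forms and $\IS{v}K$ is a subgroup of $I_qK$, it suffices to show that every anisotropic $(d+1)$-fold Pfister form $q$ over $K$ has $[q] \in \IS{v}K$. The first step is to reduce to a convenient presentation: using \Cref{L:left-slot-finding} repeatedly, write $q \cong \llangle a_1, \ldots, a_d, b]]_K$ where $v(a_1), \ldots, v(a_{m'})$ are $\ff_2$-linearly independent in $vK/2vK$ and $v(a_{m'+1}) = \cdots = v(a_d) = v(b) = 0$; since $v$ is discrete, $vK/2vK$ has order $2$, so $m' \leq 1$. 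If $m' = 0$ then all slots are units, $q$ is inert, and $[q]\in \IS{v}K$ trivially via \Cref{P:Pfister-when-good-slot}. So the interesting case is $m' = 1$: $q \cong \llangle \pi, a_2, \ldots, a_d, b]]_K$ with $v(\pi)$ a uniformizer and $a_2, \ldots, a_d, b \in \mc O_v^\times$ (and $1+4b \in \mc O_v^\times$; if $v(b)=0$ but $1+4b$ is not a unit, a small extra argument replacing the last slot handles this — or one notes $v$ dyadic so $v(4)>0$ forces $v(1+4b) = v(1) = 0$).

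The heart of the matter is then: given such a $q$, I want to find a new presentation of $q$ (or show $q$ is isotropic) in which $b$ can be chosen so that $\llangle b]]_K$ becomes split over an \emph{inert} quadratic extension — equivalently, so that $q$ has a ``good slot'' in the sense of \Cref{P:Pfister-when-good-slot}(iii). The residue-form computation of \Cref{P:Pfister-residue-computation} tells us $\RF{1}{q} \cong \llangle \ovl{a_2}, \ldots, \ovl{a_d}, \ovl b]]_{Kv}$, which is a $d$-fold Pfister form over $Kv$, hence a quadratic form of dimension $2^d$ over $Kv$. The hypothesis $[Kv : Kv\pow 2] \leq 2^{d-1}$ is a bound on the degree of imperfection of the residue field, and this is exactly the kind of hypothesis that forces $d$-fold Pfister forms over $Kv$ to be ``tame'' in a suitable sense — cf.\ \Cref{P:AllSymbolsChar2}, where $[Kv : Kv\pow 2] \le 2^{d-1}$ gives that $Kv$ is top-$d$-linked and, more relevantly here, controls the structure of $I_q^d Kv$. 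I expect the key technical input to come from \cite{MMW91} (whence the lemma's name): a classification result for quadratic forms over dyadic discretely valued fields showing that, under a bound on the imperfection degree of the residue field, every $(d+1)$-fold Pfister form is ``defined over the residue field'' up to the unit-slot part, forcing it into $\IS{v}K$.

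Concretely, the plan for the $m'=1$ case is: consider the residue form $\RF{1}{q} = \llangle \ovl{a_2}, \ldots, \ovl{a_d}, \ovl b]]_{Kv}$, a $d$-fold Pfister form over $Kv$. If it is isotropic over $Kv$, then by \Cref{P:Pfister-residue-computation} $q$ itself is isotropic and there is nothing to prove (isotropic Pfister forms are hyperbolic, hence in $\IS{v}K$). If it is anisotropic, then because $[Kv : Kv\pow 2] \leq 2^{d-1} < 2^d$, one of its slots must be a square in $Kv$ in a way that — after the appropriate change of presentation via \cite[Proposition 24.1]{ElmanKarpenkoMerkurjev} or the quadratic-Pfister analogue — allows us to assume $\ovl{b} \not\in \wp(Kv)$ is such that $T^2 - T - b$ defines an inert quadratic extension splitting $q$; more carefully, using that $q$ has $d+1$ slots but $I_q^{d+2}Kv = 0$ (which follows from the imperfection bound, via \Cref{P:AllSymbolsChar2} applied with the right index), one shows $q_{K^{un}}$ is isotropic, hence hyperbolic, hence $[q] \in \IS{v}K$ by \Cref{P:TignolCharacteriseTameQuadratic}. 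The main obstacle is the precise bookkeeping connecting the imperfection bound $[Kv:Kv\pow 2]\le 2^{d-1}$ to the assertion that a $(d+1)$-fold Pfister form with exactly one ``ramified'' slot becomes hyperbolic over $K^{un}$ — this is where one genuinely needs the structural classification from \cite{MMW91} (and possibly \cite{SpringerTameQuadratic}) rather than just the formal residue-form machinery of \Cref{sect:residue-forms}, and getting the index shifts ($d$ vs.\ $d-1$ vs.\ $d+1$) right is the delicate point.
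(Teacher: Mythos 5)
Your overall framing is right: the lemma reduces, via \Cref{P:Pfister-when-good-slot}, to showing that every anisotropic $(d+1)$-fold Pfister form over $K$ admits a presentation $\llangle a_1,\ldots,a_d,c]]_K$ with $c,\,1+4c\in\mc{O}_v^\times$, and the hypothesis $[Kv:Kv\pow{2}]\leq 2^{d-1}$ must enter through a structural classification of Pfister forms over dyadic discretely valued fields that the residue-form machinery of \Cref{sect:residue-forms} alone cannot supply. This is exactly the shape of the paper's proof, which at that point simply cites \cite[Propositions 3.9 and 4.12]{Andromeda-1} for the existence of such a presentation.

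The gap is in your middle step. \Cref{L:left-slot-finding} is a statement about \emph{bilinear} Pfister forms: it normalises the slots $a_1,\ldots,a_d$ so that at most one is a uniformiser, but it says nothing about the quadratic slot. Your claim that one may ``write $q\cong\llangle a_1,\ldots,a_d,b]]_K$ with $v(b)=0$'' by that lemma is therefore unjustified, and in fact arranging the quadratic slot to be a unit is the \emph{entire content} of the lemma being proved: by \Cref{P:Pfister-when-good-slot} this is equivalent to $[q]\in\IS{v}K$, and without the imperfection bound it genuinely fails (a Pfister form split only by ramified quadratic extensions has no such presentation). Your fallback sketch for the hard case does not repair this: from $[Kv:Kv\pow{2}]\leq 2^{d-1}$ it does not follow that ``one of the slots must be a square in $Kv$'' (the $d-1$ residue slots of a $d$-fold Pfister form can still be $2$-independent), and the assertion that $q_{K^{un}}$ is isotropic amounts to $I_q^{d+1}K^{un}=0$, which in this paper is \Cref{L:dyadic-sepclosed} --- a statement \emph{deduced from} the present lemma, so arguing that way is circular. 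To close the argument you must import the classification result (here \cite[Propositions 3.9 and 4.12]{Andromeda-1}) as a black box, as the paper does; your instinct that \cite{MMW91}-type results are needed points in the right direction but is not a proof.
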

\begin{proof}
It suffices to show that $[q] \in \IS{v}K$ for any $(d+1)$-fold Pfister form $q$. By \Cref{P:Pfister-when-good-slot}\eqref{it:good-slot2} it suffices to show that $q \cong \llangle a_1, \ldots, a_d, c]]_K$ for some $a_1, \ldots, a_d, c \in K^\times$ with $c, 1+4c \in \mc{O}_v^\times$.
This is given by \cite[Proposition 3.9 and Proposition 4.12]{Andromeda-1}.
\end{proof}
\begin{prop}\label{P:linked-henselian-char2}
Let $(K, v)$ be a dyadic henselian valued field, $d \in \nat^+$.
Assume that $[vK : 2vK] \leq 2^m$ for some $m \in \nat$ and $[Kv : Kv\pow{2}] \leq 2^{d-1}$.
Assume that $I_q^{d+m} K \subseteq \IS{v} K$.
Then $K$ is top-$(d+m)$-linked.
\end{prop}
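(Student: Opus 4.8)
The plan is to transport top-linkedness from the residue field $Kv$ up to $K$ via the residue-form isomorphism of \Cref{T:Pfister-residue-main} and \Cref{C:Pfister-residue-main}, using the hypothesis $I^{d+m}_q K \subseteq \IS{v}K$ precisely to identify $I^{d+m}_q K$ with a subgroup generated by Pfister forms lying in $\IS{v}K$, on which the residue map is well understood.

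First I would fix parameters: since $vK/2vK$ is an $\ff_2$-vector space of order at most $2^m$, we have $[vK:2vK] = 2^{m'}$ for some $m' \le m$, and I put $n = d + m - m'$, so $n \ge d \ge 1$. Next I would record that $Kv$ is top-$d$-linked: this is exactly what is established in the course of proving \Cref{P:AllSymbolsChar2} from the hypotheses $\charac(Kv) = 2$ and $[Kv : Kv\pow{2}] \le 2^{d-1}$ (applied with ``$d$'' there equal to $d-1$). Since being top-$j$-linked trivially implies being top-$(j+1)$-linked (if $I^{j+1}_q = 0$ then $I^{j+2}_q = 0$, and the zero group is vacuously linked), it follows that $Kv$ is top-$n$-linked, so $I^n_q Kv$ is linked and $I^{n+1}_q Kv = 0$.

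Then I would apply \Cref{T:Pfister-residue-main} and \Cref{C:Pfister-residue-main} with the parameter ``$m$'' taken to be $m' = \log_2[vK:2vK]$ and the parameter ``$n$'' taken to be $d+m-m'$, which is legitimate since $[vK:2vK] = 2^{m'}$ and $d+m-m' \in \nat^+$. Because $v$ is henselian and $I^{n+1}_q Kv = 0$, this yields $(\IS{v}K)^{d+m+1} = 0$ and a group isomorphism $\partial_v \colon (\IS{v}K)^{d+m} \to I^n_q Kv$, $[q] \mapsto [\RF{1}{q}]$, restricting to a bijection between the classes of $(d+m)$-fold Pfister forms in $\IS{v}K$ and the classes of $n$-fold Pfister forms over $Kv$. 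Now I would invoke the hypothesis: since $I^{d+m}_q K \subseteq \IS{v}K$, every $(d+m)$-fold Pfister form over $K$ has class in $\IS{v}K$, and as these generate $I^{d+m}_q K$ we get $I^{d+m}_q K = (\IS{v}K)^{d+m}$; likewise $I^{d+m+1}_q K \subseteq I^{d+m}_q K \subseteq \IS{v}K$ gives $I^{d+m+1}_q K = (\IS{v}K)^{d+m+1} = 0$.

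To conclude, $I^{d+m+1}_q K = 0$ is one half of top-$(d+m)$-linkedness. For the linkage half, since $I^{n+1}_q Kv = 0$ and $I^n_q Kv$ is linked, every element of $I^n_q Kv$ is the class of an $n$-fold Pfister form; pulling back along the isomorphism $\partial_v$ and using the Pfister-class bijection, every element of $I^{d+m}_q K = (\IS{v}K)^{d+m}$ is the class of a $(d+m)$-fold Pfister form, which together with $I^{d+m+1}_q K = 0$ says exactly that $I^{d+m}_q K$ is linked (\Cref{D:Linkage}). Hence $K$ is top-$(d+m)$-linked. I expect no genuine obstacle here, as the hard analysis sits in \Cref{T:Pfister-residue-main} and \Cref{P:AllSymbolsChar2}; the only points that need care are the bookkeeping when $[vK:2vK] = 2^{m'}$ is a proper power ($m' < m$) --- harmless, since the argument only uses top-$n$-linkedness of $Kv$ for some $n \ge d$ --- and the careful use of the hypothesis $I^{d+m}_q K \subseteq \IS{v}K$ (rather than anything about smaller $I^{d+m'}_q K$) in order to realise $I^{d+m}_q K$ exactly as the group $(\IS{v}K)^{d+m}$ on which the residue isomorphism is defined.
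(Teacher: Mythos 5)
Your proposal is correct and follows essentially the same route as the paper: deduce that $Kv$ is top-$d$-linked from $[Kv:Kv\pow{2}]\leq 2^{d-1}$ via \Cref{P:AllSymbolsChar2}, then use the hypothesis $I_q^{d+m}K\subseteq \IS{v}K$ to identify $I_q^{d+m}K$ with $(\IS{v}K)^{d+m}$ and transport linkage through the residue isomorphism of \Cref{T:Pfister-residue-main}. Your extra bookkeeping for the case $[vK:2vK]=2^{m'}$ with $m'<m$ is a welcome refinement of a point the paper's own (terser) proof leaves implicit, but it does not change the argument.
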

\begin{proof}
The assumption $[Kv : Kv\pow{2}] \leq 2^{d-1}$ implies that $K$ is top-$d$-linked by \Cref{P:AllSymbolsChar2} (and \Cref{C:strongly-linked-implies-linked}), in particular $I_q^{d+1}Kv = 0$.
It follows from \Cref{T:Pfister-residue-main} and the assumption $I_q^{d+m} K \subseteq \IS{v} K$ that the map $I_q^{d+m} K \to I_q^d Kv : [q] \mapsto [\RF{1}{q}]$ is an isomorphism.
It follows that $K$ is top-$(d+m)$-linked if and only if $Kv$ is top-$d$-linked.
\end{proof}

\section{Linkage over semi-global fields}
Given a valued field $(K, v)$, the \emph{rank} of the valuation (or of the valued field) is the Krull dimension of the valuation ring.
It is either a natural number, or $\infty$; in the second case a more refined notion of rank is given in \cite[Section 2.1]{Eng05}, but for our purposes the coarser notion will be sufficient.
Note that the trivial valuation is the only valuation of rank $0$, and a non-trivial valuation has rank $1$ if and only if its value group can be embedded as an ordered abelian group into $\rr$ \cite[Proposition 2.1.1]{Eng05}.

We now state the local-global principle for isotropy of quadratic forms which we will use.

\begin{stel}[V.~Mehmeti]\label{T:Mehmeti}
Let $(K,v)$ be a henselian rank $1$ valued field with $\charac(K) \neq 2$ and let $F/K$ be a function field in one variable.
Let $q$ be an anisotropic form of dimension at least $3$ over $F$.
Then $q$ is anisotropic over $F_w$ for some rank 1 valuation $w$ on $F$ such that either $w\vert_K$ is trivial or $w\vert_K = v$.
\end{stel}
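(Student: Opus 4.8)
The plan is to derive this statement from the local--global principle for isotropy of quadratic forms over function fields of curves established by Mehmeti via patching on Berkovich analytic spaces \cite{Mehmeti_PatchingBerkQuad}; essentially all of the substance lies in that theorem, and what remains is to recast it in the valuation-theoretic language used here. Concretely, I would realise $F$ as the function field of the Berkovich analytification $\mathcal{C}$ of a normal, geometrically connected projective curve over $(K,v)$ — equivalently, present $F$ as a finite extension of $K(t)$ and work over $\mathbb{P}^{1,\mathrm{an}}_{K}$ — possibly after the harmless reduction to the case where $(K,v)$ is complete (using that a henselian valued field is relatively separably algebraically closed in its completion). The hypothesis $\charac(K)\neq 2$ is exactly what lets Mehmeti's principle be invoked.

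Mehmeti's principle states that a quadratic form of dimension at least $3$ over $F$ is isotropic over $F$ if and only if it is isotropic over the complete local field $F_x$ attached to every point $x$ of $\mathcal{C}$. Taking the contrapositive, an anisotropic $q$ over $F$ must remain anisotropic over some $F_x$. Now $F_x$ is the completion of $F$ with respect to a rank $1$ valuation $w$ on $F$ — points of an analytic curve over a non-trivially valued base carry non-trivial, rank $1$ valuations, because the completed residue field $\mathcal{H}(x)$ is a non-trivially valued complete extension of $K$ into which $F$ embeds — and since $F_w\subseteq F_x$, the form $q$ is already anisotropic over the henselisation $F_w$.

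The final step would be to locate $w|_K$: its valuation ring $\mathcal{O}_w\cap K$ is an overring of $\mathcal{O}_v$ in $K$, hence a coarsening of $v$, and since $v$ has rank $1$ its only proper coarsening is the trivial valuation, so $w|_K$ is either trivial or $v$ — precisely the dichotomy claimed. (Geometrically, the first alternative picks out points of $\mathcal{C}$ lying over a closed point of the curve over $K$, the second points seeing the reduction.) I expect the serious difficulty to lie entirely in the cited patching theorem, which is the hard input; the delicate bookkeeping on my side would be to confirm that Mehmeti's family of local fields $F_x$, together with the rank $1$ hypothesis on $(K,v)$, produces no valuations on $F$ outside the two families above, and that the reduction to the complete case and the passage from a completion to its henselisation neither create nor destroy anisotropy of a form of dimension at least $3$.
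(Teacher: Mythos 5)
Your proposal follows essentially the same route as the paper: the proof there simply cites Mehmeti's Corollary~3.19 (stated for complete $(K,v)$ and already phrased in terms of the two families of rank~$1$ valuations you describe) together with a standard argument, referenced to the literature, for passing from the complete to the henselian case. The only caveat is that your justifications of the henselian-to-complete reduction and of the dichotomy for $w\vert_K$ are somewhat loose as written, but both points are exactly what the cited results supply.
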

\begin{proof}
See \cite[Corollary 3.19]{Mehmeti_PatchingBerkQuad} for the case where $(K, v)$ is complete; the extension from complete to henselian uses standard techniques and is explained e.g.~in \cite[Theorem 4.4]{BDGMZ}.
\end{proof}
If $(K, v)$ is a valued field and $F/K$ a function field in one variable, then the valuations $w$ on $F$ for which $w\vert_K = v$ can take on several forms; in particular, even when $v$ is discrete, some of the valuations $w$ on $F$ with $w\vert_K = v$ are not discrete.
They do, however, all satisfy the Dimension Inequality \autocite[Theorem 3.4.3]{Eng05}:
\begin{equation}\label{eq:dim-ineq}
\trdeg(Fw/Kv) + \rrk(wF/vK) \leq 1.
\end{equation}
Here, $\trdeg(Fw/Kv)$ denotes the transcendence degree of $Fw$ over $Kv$ and $\rrk(wF/vK)$ denotes the rational rank of the quotient group $wF/vK$, i.e.~the cardinality of a maximal linearly independent subset of $wF/vK$ considered as a $\zz$-module.
Furthermore, when equality holds in \eqref{eq:dim-ineq}, then $Fw$ is a finitely generated field extension of $Kv$ and $wF/vK$ is finitely generated as a $\zz$-module.

We shall use the following group-theoretic observation to relate the rational rank of $wF/vK$ to the index $[wF : 2wF]$.
This appears to be well-known, and implicitly contained e.g.~in \cite[Section 3]{Broecker}, but we include a proof for lack of a convenient reference.
\begin{prop}\label{P:quotient-torsionfree}
Let $G$ be a torsion-free abelian group and $H$ a subgroup of $G$ with $r = \rrk(G/H) < \infty$.
Let $n \in \nat^+$.
Then $[G : nG] \leq n^r [H : nH]$.
Furthermore, if $G/H$ is finitely generated, then $[G : nG] = n^r [H : nH]$.
\end{prop}
\begin{proof}
Consider first the case $r = 0$.
The hypothesis then implies that $G/H$ is torsion.
Furthermore, if additionally $G/H$ is finitely generated, then $G/H$ is finite.
In this case the statement is entirely contained in \cite[Lemma 3.4]{Becher-Leep}.

Now consider the general case.
We choose $S \subseteq G$ with $\lvert S \rvert = r$ and such that $\lbrace a + H \mid a \in S \rbrace$ is a maximal $\zz$-linearly independent subset of $G/H$.
Let $G_0$ denote the subgroup of~$G$ generated by $H \cup S$.
Then $G_0\simeq \zz^r\times H$, whereby
$[G_0 : nG_0] = n^r\cdot [H : nH]$.

The choice of $S$ implies that the quotient group $G/G_0$ is torsion.
Hence by the special case considered in the first paragraph we obtain that $[G:nG]\leq [G_0:nG_0]$.
If $G/H$ is finitely generated, then $G/G_0$ is finite, and again the special case considered in the first paragraph yields $[G:nG]= [G_0:nG_0]$.
\end{proof}

\begin{stel}\label{T:classKinduction}
  Let $d, m \in \nat, d \geq 1$. Assume that $(K,v)$ is a non-dyadic henselian valued field with $[vK : 2vK] \leq 2^m$.
  If $Kv$ is an $\Li{d}$-field, then $K$ is an $\Li{d+m}$-field.
\end{stel}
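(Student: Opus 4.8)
The plan is to fix a function field in one variable $F/K$ and show that it is top-$(d+m+1)$-linked, using the characterisation in \Cref{P:d-linked-characterisation}: it suffices to show that every anisotropic quadratic form in $I_q^{d+m+1}F$ has dimension $2^{d+m+1}$ (equivalently, dimension $< 2^{d+m+2} - 2^{d+m}$), and in particular that $I_q^{d+m+2}F = 0$. First I would dispose of the vanishing of $I_q^{d+m+2}F$: since $Kv$ is an $\Li{d}$-field, every finite separable extension $L/Kv$ is top-$(d+1)$-linked (by \Cref{C:strongly-linked-implies-linked}, or more elementarily because the residue field hypothesis propagates), so $I_q^{d+1}L = 0$; then \Cref{P:linked-henselian} (applied to suitable henselian valuations built from $v$, since $[vK:2vK] \leq 2^m$) lifts this to $I_q^{d+m+1}L' = 0$ for the relevant finite separable extensions $L'/K$, and \Cref{P:cdFacts} gives $I_q^{d+m+2}F = 0$. (One must be slightly careful that $[vK:2vK]\leq 2^m$ is inherited by the residue-type valuations used; this is routine since inertial extensions do not change the value group, and a discrete valuation contributes a factor $2$ to $[vK:2vK]$.)

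The heart of the argument is to bound the dimension of an anisotropic $q \in I_q^{d+m+1}F$. Here I would invoke Mehmeti's local-global principle, \Cref{T:Mehmeti}: if $q$ were anisotropic of dimension $\geq 2^{d+m+2}-2^{d+m}$ (say $3$ or more, which it certainly is), then $q$ remains anisotropic over $F_w$ for some rank-$1$ valuation $w$ on $F$ with $w|_K$ either trivial or equal to $v$. In either case, $(F_w, w)$ is a henselian rank-$1$ valued field whose residue field $Fw$ is a function field in one (or fewer, if $w$ is trivial on $K$, handled directly) variable over either $Kv$ or a finite extension thereof — in all cases a top-$(d+1)$-linked field by the $\Li{d}$-hypothesis together with \Cref{C:strongly-linked-implies-linked} — and whose value group satisfies $[wF:2wF]\leq 2^{m+1}$ (the extra factor coming from the rank-one extension when $w|_K = v$, or $[wF:2wF]\leq 2^m$ when $w|_K$ is trivial, as $Fw$ is then a function field in one variable over $Kv$ and the purely transcendental/rank considerations add at most one). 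Then \Cref{P:linked-henselian} forces $F_w$ to be top-$(d+m+1)$-linked, so $I_q^{d+m+1}F_w$ contains no anisotropic form of dimension exceeding $2^{d+m+1} < 2^{d+m+2}-2^{d+m}$, contradicting the anisotropy of $q_{F_w}$. Hence no such $q$ exists and $F$ is top-$(d+m+1)$-linked.

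The main obstacle I anticipate is the bookkeeping in the non-dyadic reduction: when composing $v$ on $K$ with a residue valuation on $F$ (or when $w|_K = v$ and $w$ has rank $2$), one must track that the resulting place has non-dyadic residue field, that its value group $\Gamma$ satisfies $[\Gamma : 2\Gamma]$ finite and bounded by the right power of $2$, and that its residue field is a function field in one variable over an algebraic extension of $Kv$, so that \Cref{P:linked-henselian} and the $\Li{d}$-hypothesis genuinely apply. The cleanest route is probably to separate the two cases of \Cref{T:Mehmeti} ($w|_K$ trivial versus $w|_K = v$) and in each case exhibit explicitly a henselian valued field with the required residue field and a value group whose $2$-divisibility defect is at most $2^{m+1}$ (respectively $2^{m}$), then quote \Cref{P:linked-henselian} directly. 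A secondary subtlety is that \Cref{T:Mehmeti} is stated for anisotropic forms of dimension $\geq 3$ over $F$ with $\charac(F)\neq 2$; since $\charac(Kv)\neq 2$ forces $\charac(K) = \charac(F) \neq 2$, this hypothesis is automatic, so no separate characteristic-$2$ treatment is needed here.
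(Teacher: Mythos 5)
Your strategy for the rank-one case is essentially the paper's: use \Cref{T:Mehmeti} to reduce isotropy to the henselisations $F_w$, show each $F_w$ is top-$(d+m+1)$-linked via \Cref{P:linked-henselian} and \Cref{C:strongly-linked-implies-linked}, and conclude with \Cref{P:d-linked-characterisation}. But there is a genuine gap: \Cref{T:Mehmeti} requires $(K,v)$ to be henselian of \emph{rank one}, whereas the theorem is stated for an arbitrary non-dyadic henselian valued field with $[vK:2vK]\leq 2^m$ --- the value group may have any rank, even infinite (any $2$-divisible value group satisfies the hypothesis with $m=0$). Your proposal never addresses this, and the reduction is not a formality: the paper first treats finite rank $n\geq 2$ by writing $v$ as a composition of a rank-one coarsening $w_1$ and a valuation $w_2$ on $Kw_1$, using that $[vK:2vK]$ is multiplicative over such compositions, and inducting; it then treats arbitrary rank by passing to the perfect hull, descending the form to a finitely generated subfield, and invoking Kuhlmann's results to produce a henselian subfield $(K_0,v_0)$ of finite rank with $K_0v_0=Kv$ and $[v_0K_0:2v_0K_0]\leq [vK:2vK]$. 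Without some such argument your proof only establishes the theorem when $v$ has rank at most one.

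A secondary issue is that the bookkeeping in your case analysis --- which you rightly flag as the delicate point --- is not merely delicate but incorrect as written. When $w\vert_K$ is trivial, $Fw$ is a finite extension of $K$, not a function field (in any number of variables) over $Kv$ or a finite extension thereof; it is top-$(d+m)$-linked (via the extension $v'$ of $v$ to $Fw$, whose value group still has defect $2^m$ and whose residue field is algebraic over $Kv$, hence top-$d$-linked), and this is strictly weaker than the top-$(d+1)$-linkage you assert once $m\geq 2$. One then reaches top-$(d+m+1)$-linkage of $F_w$ by a second application of \Cref{P:linked-henselian} to the discrete valuation $w$ itself, for which $[wF:2wF]=2$, not $2^m$. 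When $w\vert_K=v$, the Dimension Inequality splits the situation into $Fw/Kv$ a function field in one variable with $[wF:2wF]=2^m$, versus $Fw/Kv$ algebraic with $[wF:2wF]\leq 2^{m+1}$, and the linkage level of $Fw$ and the power of $2$ you must feed into \Cref{P:linked-henselian} differ between the two subcases. All of this is repairable, but the repaired version is exactly the paper's case analysis and needs to be written out. (Your separate verification that $I_q^{d+m+2}F=0$ via \Cref{P:cdFacts} is harmless but unnecessary: condition (3) of \Cref{P:d-linked-characterisation} already subsumes it.)
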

\begin{proof}
Since, for any $k \in \nat^+$, an $\Li{k}$-field is in particular an $\Li{k+1}$-field, it suffices to consider the case where $[vK : 2vK] = 2^m$, which we restrict to without loss of generality.
If $v$ is trivial, then there is nothing to show, so assume that $v$ is non-trivial, i.e.~has rank at least $1$.

Assume first that $v$ is of rank $1$.
Let $F/K$ be a function field in one variable and let $q$ be a non-singular quadratic form of dimension strictly greater than $2^{d+m+1}$ and such that $[q] \in I_q^{d+m+1}F$.
We will show that $q$ is isotropic.
In view of \Cref{P:d-linked-characterisation}, this suffices to conclude that $K$ is an  $\Li{d+m}$-field.

By \Cref{T:Mehmeti} it suffices to show that $q_{F_w}$ is isotropic for every rank $1$ valuation $w$ on $F$ for which either $w\vert_K = 0$ or $w\vert_K = v$.
We claim that for every valuation $w$ on $F$ for which either $w\vert_K = 0$ or $w\vert_K = v$, $F_w$ is top-$(d+m+1)$-linked.
The isotropy of $q_{F_w}$ then follows by \Cref{P:d-linked-characterisation}.

Let $w$ be a rank $1$ valuation on $F$ with $w\vert_K = 0$.
Then $w$ is a discrete valuation, and its residue field $Fw$ is a finite extension of $K$, hence itself a henselian valued field with respect to a valuation $v'$ extending $v$.
We have that $[v'(Fw) : 2v'(Fw)] = [vK : 2vK] = 2^m$ and that $(Fw)v'$ is a finite extension of $Kv$, hence $(Fw)v'$ is top-$d$-linked by \Cref{C:strongly-linked-implies-linked}.
It follows by \Cref{P:linked-henselian} that $Fw$ is top-$(d+m)$-linked and hence $F_w$ is top-$(d+m+1)$-linked.

Now let $w$ be a rank $1$ valuation on $F$ with $w\vert_K = v$.
By the Dimension Inequality \eqref{eq:dim-ineq} precisely one of two cases occurs.

In the first case, $Fw/Kv$ is a function field in one variable and $[wF : vK] < \infty$.
By \Cref{P:quotient-torsionfree} we have $[wF : 2wF] = [vK : 2vK] = 2^m$.
Since furthermore $Fw$ is top-$(d+1)$-linked it follows by \Cref{P:linked-henselian} that $F_w$ is top-$(d+m+1)$-linked.

In the second case, $Fw/Kv$ is an algebraic field extension and either $wF/vK$ is torsion, or $wF \cong vK \times \zz$ as an abelian group.
In either case, we have $[wF : 2wF] \leq 2[vK : 2vK] = 2^{m+1}$ by \Cref{P:quotient-torsionfree}.
In this case, since $Fw$ is top-$d$-linked by \Cref{C:strongly-linked-implies-linked}, it follows that $F_w$ is top-$(d+m+1)$-linked by \Cref{P:linked-henselian}.

This concludes the proof when $v$ is of rank $1$.
The extension from rank $1$ to arbitrary rank follows along the same lines as in the proof of \cite[Theorem 5.8]{BDGMZ}; we outline the main ideas.

First, suppose that the rank of $v$ is some natural number $n \geq 2$; we proceed by induction on $n$.
Let $w_1$ be a coarsest non-trivial coarsening of $v$.
Then $w_1$ is a rank $1$ valuation, and there exists a rank $n-1$ valuation $w_2$ on the residue field $Kw_1$ such that $v$ is a composition of $w_1$ and $w_2$, i.e.~such that $\mc{O}_v = \lbrace x \in \mc{O}_{w_1} \mid \ovl{x}^{w_1} \in \mc{O}_{w_2} \rbrace$.
See \cite[Section 2.3]{Eng05} for background on coarsenings, refinements, and compositions of valuations.
One has that $w_1$ and $w_2$ are both henselian, $(Kw_1)w_2 = Kv$, and $vK \cong w_1K \times w_2(Kw_1)$ as abelian groups, whereby in particular $[vK : 2vK] = [w_1K : 2w_1K][w_2(Kw_1) : 2w_2(Kw_1)]$.
We can apply the induction hypothesis to both $w_1$ and $w_2$, and infer the desired statement for $v$.

Finally, assume that $v$ is arbitrary, and let $F/K$ be a function field in one variable.
We rely again on \Cref{P:d-linked-characterisation}.
Let $q$ be a non-singular quadratic form of dimension strictly greater than $2^{d+m+1}$ and such that $[q] \in I_q^{d+m+1}F$.
We need to show that $q$ is isotropic; let us suppose for the sake of a contradiction that it is not.
We may first replace $K$ by its perfect hull $K^{per}$ and $F$ by $FK^{per}$: since $\charac(K) \neq 2$ we have that $K^{per}/K$ and $FK^{per}/F$ are a limit of purely inseparable odd degree extensions, hence $q$ remains anisotropic over $FK^{per}$ (by Springer's Theorem, see e.g.~\cite[Corollary 18.5]{ElmanKarpenkoMerkurjev}), and $K^{per}$ with the natural extension of $v$ still satisfies the hypotheses of the theorem; in particular $[vK^{per} : 2vK^{per}] \leq [vK : 2vK] \leq 2^m$.
So assume from now on, without loss of generality, that $K$ is perfect. 
Let $F_1$ be a finitely generated subfield of $F$ such that $q$ is defined over $F_1$ and $F_1/(F_1 \cap K)$ is of transcendence degree $1$.
Note that $v\vert_{F_1 \cap K}$ has finite rank, again by the Dimension Inequality.
Using that $K$ is perfect, there exists a subfield $K_0$ of $K$ containing $F_1 \cap K$ such that, with $v_0 = v\vert_{K_0}$, $(K_0, v_0)$ is henselian of finite rank, $K_0v_0 = Kv$, and $vK/v_0K_0$ is torsion-free: see e.g.~\cite[Corollary 3.16]{Kuhlmann-Tame} and its proof.
In particular, it follows that $[v_0K_0 : 2v_0K_0] \leq [vK : 2vK]$.
We conclude by the case considered in the previous paragraph that $K_0$ is an $\Li{d+m}$-field.
Setting $F_0 = F_1K_0$, we have that $q$ is defined over $F_0$ and $[q] \in I_q^{d+m+1}F_0$, whereby $q$ is isotropic, as we had to show.
\end{proof}

\begin{opm}\label{R:local-global-technique}
The strategy used in the first part of the proof of \Cref{T:classKinduction} (when $v$ is of rank $1$) is not new.
Local-global principles based on patching prior to the one from \Cref{T:Mehmeti} have been used in the past to study problems about quadratic forms over function fields over henselian discretely valued fields by passing to completions.

For example, in \cite[Theorem 4.10]{HHK_ApplicationsPatchingToQuadrFormsAndCSAs}, the following was shown: if $(K, v)$ is a non-dyadic henselian discretely valued field and $e \in \nat^+$ is such that, for any function field in one variable $F/Kv$, one has that all quadratic forms in $e+1$ variables over $F$ are isotropic (or in the terminology of \cite{HHK_ApplicationsPatchingToQuadrFormsAndCSAs}: the \textit{strong $u$-invariant of $Kv$} is at most $e$), then it follows that, for every function field in one variable $F/K$, all quadratic forms in $2e+1$ variables over $F$ are isotropic.
By iterating this argument, one can then obtain the following: if $K_1, K_2, \ldots, K_d$ is a sequence of fields, where $K_1$ is finite of odd characteristic, and each $K_{i+1}$ is complete with respect to a discrete valuation with residue field $K_i$, then, for any function field in one variable $F/K_d$, all quadratic forms in $2^{d+1} + 1$ variables over $F$ are isotropic (see \cite[Corollary 4.14]{HHK_ApplicationsPatchingToQuadrFormsAndCSAs}).
Combining this result with \Cref{P:d-linked-characterisation}, we obtain that $K_d$ is an $\Li d$-field.
However, there seems to be no way known to make this argument work when $\charac(K_1) = 2$.
We present a way to nevertheless obtain the linkage result in \Cref{T:higher-local}.
\end{opm}

\begin{lem}\label{L:dyadic-sepclosed}
Let $(K, v)$ be a dyadic henselian discretely valued field with $\charac(K) = 0$, $d \in \nat^+$ such that $[Kv : (Kv)\pow{2}] \leq 2^{d-1}$ and $Kv$ is separably closed.
Then for every field extension $F/K$ of transcendence degree one, $I_q^{d+2} F = 0$.
\end{lem}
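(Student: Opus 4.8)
The plan is to reduce everything to the residue field by means of \Cref{P:cdFacts}, which applies since $\charac(K) = 0 \neq 2$: it suffices to show that $I_q^{d+1} L = 0$ for every finite separable extension $L/K$. So I would fix such an $L$ and let $v'$ be the unique extension of $v$ to $L$. Then $(L, v')$ is again a dyadic henselian discretely valued field with $\charac(L) = 0$; its residue field $Lv'$ is a finite extension of $Kv$, hence it is purely inseparable over the separably closed field $Kv$ and therefore itself separably closed, and since the degree of imperfection is invariant under finite field extensions (a direct consequence of the Frobenius isomorphism $x \mapsto x^2$) one has $[Lv' : (Lv')^{(2)}] = [Kv : (Kv)^{(2)}] \leq 2^{d-1}$.

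The crucial observation is then that a separably closed field $E$ of characteristic $2$ satisfies $I_q E = 0$: for any $b \in E$ the Artin--Schreier polynomial $T^2 - T - b \in E[T]$ is separable, hence has a root in $E$, so the $1$-fold Pfister form $\llangle b ]]_E$ is isotropic and therefore hyperbolic; as every non-singular quadratic form of even dimension over $E$ is an orthogonal sum of non-singular binary forms, each of which is similar to some $\llangle b ]]_E$, every such form is hyperbolic, i.e.\ $I_q E = 0$. Applying this to $E = Lv'$ and feeding $I_q(Lv') = 0$ into the isomorphism $\IS{v'}L \to \bigoplus_{\pi} I_q(Lv')$ of \Cref{P:residueChar2} gives $\IS{v'}L = 0$. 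On the other hand, since $[Lv' : (Lv')^{(2)}] \leq 2^{d-1}$, \Cref{L:Z-valued-maxdim-MMW} applies to $(L, v')$ and yields $I_q^{d+1} L \subseteq \IS{v'}L = 0$. Hence $I_q^{d+1} L = 0$ for every finite separable $L/K$, and \Cref{P:cdFacts} concludes that $I_q^{d+2} F = 0$ for every transcendence degree one extension $F/K$.

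I do not expect any single step to constitute a serious obstacle once Sections \ref{sect:residue-forms} and \ref{sect:linkage-local} are available; the one point deserving attention is the vanishing $I_q(Lv') = 0$, which is a genuine strengthening of what the imperfection bound alone provides (via \Cref{P:AllSymbolsChar2} one would only obtain $I_q^{d}(Lv') = 0$), and it is precisely this total collapse of $\IS{v'}L$ that makes the argument work. The remaining ingredients --- the unique extension of the henselian valuation to the finite extension $L/K$, the persistence of separable closedness under algebraic extensions, and the invariance of the degree of imperfection under the finite extension $Lv'/Kv$ --- are routine.
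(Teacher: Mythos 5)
Your proof is correct and follows essentially the same route as the paper: reduce via \Cref{P:cdFacts} to showing $I_q^{d+1}L = 0$ for finite (separable) extensions $L/K$, then combine \Cref{L:Z-valued-maxdim-MMW} with the vanishing of the Witt group of the separably closed residue field. The only cosmetic difference is that you deduce $\IS{v'}L = 0$ directly from \Cref{P:residueChar2} and $I_q(Lv') = 0$, whereas the paper routes through the isomorphism $I_q^{d+1}L \cong I_q^d(Lv')$ of \Cref{T:Pfister-residue-main}; both are valid and rely on the same residue-form machinery.
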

\begin{proof}
Since $Kv$ is separably closed, we have $I_q Kv = 0$, so in particular $I_q^d Kv = 0$.
By \Cref{L:Z-valued-maxdim-MMW} and \Cref{T:Pfister-residue-main} we have that $I^{d+1}_q K \cong I^{d}_q Kv = 0$.

Every finite extension $L/K$ is again a dyadic henselian valued field with $\charac(L) = 0$ and with $[Lw : (Lw)\pow{2}] \leq 2^{d-1}$ and $Lw$ separably closed, for $w$ the unique extension of $v$ to $L$.
So $I^{d+1}_q L = 0$ follows by the argument from the previous paragraph.
Now the desired statement follows from \Cref{P:cdFacts}.
\end{proof}
\begin{stel}\label{T:classKinduction-char2}
Let $d \in \nat^+$.
Let $(K, v)$ be a dyadic henselian discretely valued field, $\charac(K) = 0$.
If $[Kv : (Kv)\pow{2}] \leq 2^{d-1}$, then $K$ is an $\Li{d+1}$-field.
\end{stel}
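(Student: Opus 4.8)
The plan is to mirror the proof of \Cref{T:classKinduction}, replacing its non-dyadic ingredients by their characteristic-$2$ counterparts. By \Cref{P:d-linked-characterisation} and \Cref{P:cdFacts} (applicable since $\charac(K) = 0$), it suffices to prove, for every function field in one variable $F/K$, that $I_q^{d+2}L = 0$ for all finite extensions $L/K$ (which forces $I_q^{d+3}F = 0$) and that every anisotropic quadratic form in $I_q^{d+2}F$ has dimension less than $2^{d+3} - 2^{d+1}$ (which forces $I_q^{d+2}F$ to be linked, i.e.\ $F$ to be top-$(d+2)$-linked).

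First I would treat finite extensions of $K$. Any finite extension $L/K$, with $w$ the unique extension of $v$, is again dyadic, henselian, discrete and of characteristic $0$, and since the degree of imperfection is preserved under finite extensions, $[Lw : (Lw)\pow{2}] = [Kv : Kv\pow{2}] \leq 2^{d-1}$. Hence $Lw$ is an $\Li{d}$-field by \Cref{P:AllSymbolsChar2}, so top-$d$-linked by \Cref{C:strongly-linked-implies-linked} and $I_q^{d+1}Lw = 0$; and $I_q^{d+1}L \subseteq \IS{w}L$ by \Cref{L:Z-valued-maxdim-MMW}. By \Cref{T:Pfister-residue-main} with $n = d+1$, $m = 1$, the map $(\IS{w}L)^{d+2} \to I_q^{d+1}Lw$ sending $[q]$ to $[\RF{1}{q}]$ is an isomorphism; its target vanishes, and every $(d+2)$-fold Pfister form over $L$ lies in $I_q^{d+1}L \subseteq \IS{w}L$, so $I_q^{d+2}L = (\IS{w}L)^{d+2} = 0$. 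The same inputs, via \Cref{P:linked-henselian-char2} (with $m = 1$), show moreover that $K$ and all its finite extensions are top-$(d+1)$-linked.

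For the linkage over a function field $F/K$ I would invoke Mehmeti's local-global principle. Let $q$ be anisotropic in $I_q^{d+2}F$ of dimension exceeding $2^{d+2}$; by \Cref{T:Mehmeti} it suffices to show $q_{F_w}$ is isotropic for every rank $1$ valuation $w$ on $F$ with $w\vert_K$ trivial or $w\vert_K = v$, and for this it is enough that each such $F_w$ be top-$(d+2)$-linked. If $w\vert_K$ is trivial, then $w$ is discrete and $Fw$ is a finite extension of $K$, so $F_w$ is a non-dyadic henselian discretely valued field whose residue field $Fw$ is top-$(d+1)$-linked, and \Cref{P:linked-henselian} gives that $F_w$ is top-$(d+2)$-linked. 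If $w\vert_K = v$ and $Fw/Kv$ is a function field in one variable, then $wF \cong \zz$ and $[Fw : (Fw)\pow{2}] = 2[Kv : Kv\pow{2}] \leq 2^d$, so \Cref{L:Z-valued-maxdim-MMW} together with \Cref{P:linked-henselian-char2} (with $m = 1$) again gives top-$(d+2)$-linkage. The remaining case is $w\vert_K = v$ with $Fw/Kv$ algebraic; here $[Fw : (Fw)\pow{2}] \leq 2^{d-1}$, and since $F/K$ is finitely generated one has $wF \cong \zz$ or $wF \cong \zz^2$, so $[wF : 2wF] \leq 4$.

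The main obstacle is this last case when $wF \cong \zz^2$: then $F_w$ is a dyadic henselian valued field of characteristic $0$ whose value group is \emph{not} discrete, so \Cref{L:Z-valued-maxdim-MMW} does not apply verbatim and one cannot immediately verify $I_q^{d+2}F_w \subseteq \IS{w}F_w$, which is exactly the hypothesis needed to invoke \Cref{P:linked-henselian-char2} with $m = 2$. I expect this to be settled either by establishing the containment $I_q^{d+2}F_w \subseteq \IS{w}F_w$ directly from the classification results for dyadic henselian fields underlying \Cref{L:Z-valued-maxdim-MMW}, now allowing value group $\zz^2$ (heuristically: the quantity $\log_2[wF : 2wF] + \log_2[Fw : (Fw)\pow{2}] \leq 2 + (d-1)$ bounds the length of the ``wild'' symbols, so longer Pfister forms must carry a unit slot), or by a reduction to the discrete situation (choosing $t \in F$ whose value generates $wF/vK$, the field $F_w$ is a finite extension of $M := K(t)_w$, a henselian field with value group $\cong \zz^2$ and residue field $Kv$, and one shows $I_q^{d+2}M \subseteq \IS{w}M$ and transfers). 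Once top-$(d+2)$-linkage of $F_w$ is secured in all cases, the anisotropic part of $q_{F_w}$ has dimension at most $2^{d+2} < \dim q$, so $q_{F_w}$ is isotropic, and \Cref{T:Mehmeti} yields the isotropy of $q$, completing the proof.
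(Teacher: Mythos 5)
Your overall architecture matches the paper's: reduce to isotropy of large forms in $I_q^{d+2}F$ via \Cref{P:d-linked-characterisation}, apply \Cref{T:Mehmeti}, and establish top-$(d+2)$-linkage of each completion $F_w$. The cases $w\vert_K$ trivial and $w\vert_K = v$ with $Fw/Kv$ a function field are handled correctly and essentially as in the paper. But the case you flag as ``the main obstacle'' is a genuine gap, not a routine verification: when $w\vert_K = v$ and $Fw/Kv$ is algebraic, the valuation $w$ need not be discrete (its value group can be $\zz^2$, or even a non-cyclic subgroup of $\qq$ in the torsion case, so your assertion that $wF \cong \zz$ or $\zz^2$ also needs care), and so \Cref{L:Z-valued-maxdim-MMW} --- whose proof rests on results of \cite{Andromeda-1} stated for \emph{discretely} valued fields --- is unavailable. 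Neither of your two suggested repairs is carried out, and neither is what the paper does; without the containment $I_q^{d+2}F_w \subseteq \IS{w}F_w$ you cannot invoke \Cref{P:linked-henselian-char2} with $m=2$, and the proof does not close.

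The paper's resolution is different from both of your suggestions and is worth internalising: it proves $I_q^{d+2}F \subseteq \IS{w}F$ \emph{uniformly} for every $w$ extending $v$, by passing to the maximal inert (unramified) extension. By \Cref{P:TignolCharacteriseTameQuadratic}\eqref{it:inerthyp1} the containment is equivalent to $I_q^{d+2}F^{un}_w = 0$. Now $F^{un}_w$ is algebraic over $FK^{un}$, hence of transcendence degree $1$ over $K^{un}$, and $K^{un}$ is again a dyadic henselian \emph{discretely} valued field of characteristic $0$ with separably closed residue field satisfying $[K^{un}v^{un} : (K^{un}v^{un})\pow{2}] \leq 2^{d-1}$. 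A separate lemma (\Cref{L:dyadic-sepclosed}) shows $I_q^{d+2} = 0$ for every transcendence-degree-one extension of such a field, by combining \Cref{L:Z-valued-maxdim-MMW}, \Cref{T:Pfister-residue-main} (which give $I_q^{d+1}L \cong I_q^d(Lw) = 0$ for all finite extensions $L/K^{un}$) and \Cref{P:cdFacts}. The point is that discreteness is only ever needed for the \emph{base} field $K^{un}$, where it holds, and the non-discrete valuation $w$ on $F$ is never fed into \Cref{L:Z-valued-maxdim-MMW} at all. You would need to supply this (or an equivalent) argument for your proof to be complete.
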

\begin{proof}
Let $F/K$ be a function field in one variable and let $q$ be a non-singular quadratic form of dimension strictly greater than $2^{d+2}$ and such that $[q] \in I_q^{d+2}F$.
We will show that $q$ is isotropic.
In view of \Cref{P:d-linked-characterisation}, this suffices to conclude that $K$ is an  $\Li{d+1}$-field.

By \Cref{T:Mehmeti} it suffices to show that $q_{F_w}$ is isotropic for every rank $1$ valuation $w$ on $F$ for which either $w\vert_K = 0$ or $w\vert_K = v$.
We claim that for every valuation $w$ on $F$ for which either $w\vert_K = 0$ or $w\vert_K = v$, $F_w$ is top-$(d+1)$-linked.
The isotropy of $q_{F_w}$ then follows by \Cref{P:d-linked-characterisation}.

Let $w$ be a rank $1$ valuation on $F$ with $w\vert_K = 0$.
Then $w$ is a discrete valuation, and its residue field $Fw$ is a finite extension of $K$, hence itself a henselian valued field with respect to a discrete valuation $v'$ extending $v$.
We have that $(Fw)v'$ is a finite extension of $Kv$, hence $[(Fw)v' : ((Fw)v')\pow{2}] \leq 2^{d-1}$.
It follows by \Cref{L:Z-valued-maxdim-MMW} and \Cref{P:linked-henselian-char2} that $Fw$ is top-$(d+1)$-linked and hence by \Cref{P:linked-henselian} $F_w$ is top-$(d+2)$-linked.

Now let $w$ be a rank $1$ valuation on $F$ with $w\vert_K = v$.
We first argue that $I_q^{d+2} F \subseteq \IS{w} F$.
In view of \Cref{P:TignolCharacteriseTameQuadratic} we equivalently need to show that $I_q^{d+2} F^{un}_w = 0$.
The subfield $K^{un}$ of $F^{un}_w$ has a separably closed residue field $K^{un}v^{un}$ with $[K^{un}v^{un} : (K^{un}v^{un})\pow{2}] \leq 2^{d-1}$, and $F^{un}_w$ is an algebraic extension of $FK^{un}$ and thus of transcendence degree $1$ over $K^{un}$, hence $I_q^{d+2} F^{un}_w = 0$ by \Cref{L:dyadic-sepclosed}.
This concludes the proof of the claim.

By the Dimension Inequality \eqref{eq:dim-ineq} precisely one of two cases occurs.
In the first case, $Fw/Kv$ is a function field in one variable and $w$ is a discrete valuation.
In this case, since $[Fw : (Fw)\pow{2}] \leq 2[Kv:(Kv)\pow{2}] \leq 2^d$, it follows by \Cref{P:linked-henselian-char2} that $F_w$ is top-$(d+2)$-linked.

In the second case, $Fw/Kv$ is an algebraic field extension and either $wF/vK$ is torsion, or $wF \cong \zz^2$ as an abelian group.
In either case, we have by \Cref{P:quotient-torsionfree} that $[wF : 2wF] \leq 2[vK : 2vK] = 4$ and $[Fw : (Fw)\pow{2}] \leq [K : K\pow{2}] \leq 2^{d-1}$, so it follows by \Cref{P:linked-henselian-char2} that $F_w$ is top-$(d+2)$-linked.
\end{proof}
It would be worthwhile to investigate to what extent one can weaken the hypotheses of \Cref{T:classKinduction-char2}, e.g.~replacing ``$[Kv : (Kv)\pow{2}]\leq 2^{d-1}$ by ``$Kv$ is is an $\Li{d}$-field'', or admitting other types of valuations as in \Cref{T:classKinduction}.

For completeness, we remark that when the valued field itself has characteristic $2$, very little can be said in general:
\begin{opm}\label{R:complete-char2}
For a valued field $(K, v)$ with $\charac(K) = 2$, we have $[K : K\pow{2}] \geq [vK : 2vK][Kv : Kv\pow{2}]$.
However, for any field $\kappa$ of characteristic $2$, any ordered abelian group $\Gamma$, and any $d \in \nat^+$, one can find a henselian valued field $(K, v)$ of characteristic $2$ with $vK = \Gamma$, $Kv = \kappa$ and $[K : K\pow{2}] > 2^d$.
Thus in view of \Cref{P:AllSymbolsChar2}, given a henselian valued field $(K, v)$ of characteristic $2$, nothing positive can be inferred on whether $K$ is an $\Li{d}$-field for some given $d$, just by knowing $Kv$ and $vK$.
\end{opm}

Following \cite{Kuhlmann-Tame} a henselian valued field $(K, v)$ field is called \emph{inseparably defectless} if for every inseparable finite extension $(L, w)$ one has $[L : K] = [wL : wK][Lw : Kv]$.
In particular, if $(K, v)$ is inseparably defectless and of characteristic $2$, then $[K : K\pow{2}] = [vK : 2vK][Kv : Kv\pow{2}]$ holds.
A complete discretely valued field is always inseparably defectless, see e.g.~\cite[\nopp 16:4]{OMe00}.
We thus obtain the following.
\begin{prop}\label{P:complete-char2}
Let $d \in \nat^+$.
Let $(K, v)$ be a complete discretely valued field, $\charac(K) = 2$.
If $[Kv : (Kv)\pow{2}] \leq 2^{d-1}$, then $[K : K\pow{2}] \leq 2^d$, and in particular $K$ is an $\Li{d+1}$-field.
\end{prop}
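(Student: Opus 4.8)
The plan is to reduce the claim to the two facts recalled just before the statement: first, that any complete discretely valued field is inseparably defectless; second, that for an inseparably defectless valued field $(K,v)$ with $\charac(K)=2$ one has the exact formula $[K:K\pow{2}] = [vK:2vK]\,[Kv:Kv\pow{2}]$.

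First I would note that a discrete valuation has value group $vK \cong \zz$, so $[vK:2vK] = 2$. Since $(K,v)$ is complete discretely valued, it is inseparably defectless, and therefore the degree-of-imperfection formula above applies and yields $[K:K\pow{2}] = 2\,[Kv:Kv\pow{2}]$. Invoking the hypothesis $[Kv:Kv\pow{2}] \leq 2^{d-1}$ then gives $[K:K\pow{2}] \leq 2^d$, which is the first assertion.

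For the second assertion it now suffices to quote the first statement of \Cref{P:AllSymbolsChar2}: a field $K$ of characteristic $2$ with $[K:K\pow{2}] \leq 2^d$ is an $\Li{d+1}$-field. Since we have just established $[K:K\pow{2}] \leq 2^d$, this finishes the proof.

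I do not expect any real obstacle here: the argument is a direct concatenation of cited results. The only point that requires (minor) care is to make sure one is entitled to use the \emph{equality} in the degree formula rather than merely the general inequality from \Cref{R:complete-char2}, which is precisely where inseparable defectlessness --- guaranteed by completeness of the discrete valuation --- enters.
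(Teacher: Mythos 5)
Your argument is correct and is exactly the one the paper intends: the proposition is stated as an immediate consequence of the preceding discussion (complete discretely valued $\Rightarrow$ inseparably defectless $\Rightarrow$ $[K:K\pow{2}] = [vK:2vK][Kv:Kv\pow{2}] = 2[Kv:Kv\pow{2}]$, then \Cref{P:AllSymbolsChar2}). Your closing remark correctly identifies the one subtlety, namely that the general inequality of \Cref{R:complete-char2} only bounds $[K:K\pow{2}]$ from below, so the defectlessness equality is genuinely needed.
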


We conclude with some applications of \Cref{T:classKinduction} and \Cref{T:classKinduction-char2}.
Concretely, we may apply these theorems to complete discretely valued fields with residue fields mentioned in \Cref{E:linkage}.
\begin{vbn}\label{E:Ld-fields-new}
\begin{enumerate}
\item $\qq(\sqrt{-1})(\!(T)\!)$ is an $\Li{3}$-field.
\item If $K$ is pseudo-algebraically closed, and furthermore perfect if $\charac(K) = 2$, then $K(\!(T)\!)$ is an $\Li{2}$-field.
\end{enumerate}
\end{vbn}
As promised, we obtain in particular the following result on higher local fields.
\begin{stel}\label{T:higher-local}
Let $d \in \nat^+$ and suppose that there is a sequence $K_1, K_2, \ldots, K_d$ of fields, where $K_1$ is finite, and each $K_{i+1}$ is complete with respect to a discrete valuation with residue field $K_i$.
Then $K_d$ is an $\Li{d}$-field.
\end{stel}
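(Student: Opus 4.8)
The plan is to prove \Cref{T:higher-local} by induction on $d$, using the transfer theorems established earlier as the engine that climbs the tower of complete discretely valued fields. First I would set up the induction. For the base case $d = 1$, the field $K_1$ is finite, and by \Cref{E:linkage}\eqref{it:Cd} (since a finite field is a $C_1$-field) we have that $K_1$ is an $\Li{1}$-field, which is exactly the claim for $d=1$.

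For the induction step, suppose $d \geq 2$ and that $K_{d-1}$ is an $\Li{d-1}$-field; we want to conclude that $K_d$ is an $\Li{d}$-field. Here $K_d$ is complete, hence henselian, with respect to a discrete valuation $v$ whose residue field is $K_{d-1}$. Since $v$ is discrete, $vK_d \cong \zz$, so $[vK_d : 2vK_d] = 2 = 2^1$. The argument now splits into two cases according to the residue characteristic. If $\charac(K_{d-1}) \neq 2$, then $v$ is non-dyadic and \Cref{T:classKinduction} applies with $m = 1$: since $K_dv = K_{d-1}$ is an $\Li{d-1}$-field, $K_d$ is an $\Li{(d-1)+1}$-field, i.e.\ an $\Li{d}$-field. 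If $\charac(K_{d-1}) = 2$, one should distinguish further whether $\charac(K_d) = 0$ or $\charac(K_d) = 2$. When $\charac(K_d) = 0$, one needs that $[K_{d-1} : K_{d-1}\pow{2}] \leq 2^{(d-1)-1} = 2^{d-2}$ to invoke \Cref{T:classKinduction-char2} (with the role of ``$d$'' there played by $d-1$), yielding that $K_d$ is an $\Li{d}$-field. When $\charac(K_d) = 2$, one uses \Cref{P:complete-char2} instead, again needing the bound $[K_{d-1} : K_{d-1}\pow{2}] \leq 2^{d-2}$.

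The main obstacle is therefore to verify the degree-of-imperfection hypothesis $[K_{d-1} : K_{d-1}\pow{2}] \leq 2^{d-2}$ needed in the dyadic cases. For this I would strengthen the induction hypothesis: along the tower, prove simultaneously that whenever $\charac(K_i) = 2$ one has $[K_i : K_i\pow{2}] \leq 2^{i-1}$. The base of this refined claim: the first index $j$ at which characteristic $2$ appears must have $K_j$ finite or $K_j$ complete discretely valued with residue field $K_{j-1}$ of characteristic $0$; in the former case $K_j$ is perfect so $[K_j : K_j\pow{2}] = 1 \le 2^{j-1}$, and in the latter case $K_j$ is complete discretely valued with perfect residue field, so by the defectlessness of complete discrete valuations (\cite[\nopp 16:4]{OMe00}, as invoked before \Cref{P:complete-char2}) we get $[K_j : K_j\pow{2}] = [vK_j : 2vK_j] \cdot [K_{j-1} : K_{j-1}\pow{2}] = 2 \cdot 1 = 2 = 2^1 \le 2^{j-1}$ provided $j \ge 2$ (and $j=1$ is impossible since $K_1$ is finite hence perfect). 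The propagation step is the same identity: if $\charac(K_i) = \charac(K_{i+1}) = 2$ then $[K_{i+1} : K_{i+1}\pow{2}] = 2 [K_i : K_i\pow{2}] \leq 2 \cdot 2^{i-1} = 2^i$, as needed.

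Assembling these pieces: the refined induction gives, at each level $i$ with $\charac(K_i) = 2$, the bound $[K_i : K_i\pow{2}] \leq 2^{i-1}$, which is precisely what feeds \Cref{T:classKinduction-char2} and \Cref{P:complete-char2} at the next level; together with \Cref{T:classKinduction} for the non-dyadic levels and the finite-field base case, this yields by induction that $K_d$ is an $\Li{d}$-field for every $d \in \nat^+$. I expect the only genuinely delicate point to be bookkeeping the three characteristic-configuration cases at each step and confirming that the numerical bounds line up so that ``$d-1$ in the residue field'' consistently produces ``$d$ in the valued field'' across all of them.
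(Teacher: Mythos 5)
Your proof is correct and follows essentially the same route as the paper's: the same base case (finite fields are $\Li{1}$-fields by \Cref{E:linkage}), the same three transfer results (\Cref{T:classKinduction}, \Cref{T:classKinduction-char2}, \Cref{P:complete-char2}) with the same numerical bookkeeping, and the same auxiliary bound $[K_i : K_i\pow{2}] \leq 2^{i-1}$ via inseparable defectlessness; the paper merely organizes the argument by locating once and for all the index $j$ at which the characteristic switches from $2$ to $0$, rather than inducting level by level. One harmless slip worth noting: the ``latter case'' in the base of your refined claim --- characteristic $2$ first appearing at some index $j \geq 2$ above a characteristic-$0$ residue field $K_{j-1}$ --- cannot occur, since $\charac(K_j) = 2$ forces $\charac(K_{j-1}) = 2$ (the residue field is a quotient of the valuation ring), so characteristic $2$ can only occupy an initial segment of the tower starting at the perfect finite field $K_1$, which is the only base case you actually need.
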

\begin{proof}
If $\charac(K_1) \neq 2$, then $\charac(K_i) \neq 2$ for all $i=1, \ldots, d$.
Furthermore, $K_1$ is an $\Li{1}$-field by \Cref{E:linkage}\eqref{it:Cd}.
The result then follows by applying \Cref{T:classKinduction} $d-1$ times.

Now assume that $\charac(K_1) = 2$.
There exists $j \in \lbrace 1, \ldots, d \rbrace$ such that $\charac(K_i) = 2$ for $i = 1, \ldots, j$ and $\charac(K_i) = 0$ for $i = j+1, \ldots, d$.
In view of \Cref{P:complete-char2} we have $[K_j : K_j\pow{2}] = 2^{j-1}$.
If $j = d$, then the desired statement follows from \Cref{P:AllSymbolsChar2} and we are done.
Finally, if $j < d$, then $K_{j+1} \in \Li{j+1}$ by \Cref{T:classKinduction-char2}, and then $K_d$ is an $\Li{d}$-field by \Cref{T:classKinduction} (applied $d-j-1$ times).
\end{proof}

Looking onward, if we want to understand the class of $\Li{d}$-fields, a crucial question is how the property behaves under transcendental extensions.
More specifically:
\begin{ques}
Let $d \in \nat$ and let $K$ be an $\Li{d}$-field.
Is $K(T)$ an $\Li{d+1}$-field?
\end{ques}

\printbibliography
\end{document}